\theoremstyle{thmstyleone}%
\newtheorem{theorem}{Theorem}%  meant for continuous numbers
\newtheorem{proposition}[theorem]{Proposition}% 
\theoremstyle{thmstyletwo}%
\newtheorem{example}{Example}%
\newtheorem{remark}{Remark}%
\theoremstyle{thmstylethree}%
\newtheorem{lemma}[theorem]{Lemma}
\DeclareMathOperator{\sech}{sech}
\newcommand{\IR}{{\mathbb{R}}}
\newcommand{\IZ}{{\mathbb{Z}}}
\newcommand{\mO}{{\mathcal{O}}}
\newcommand{\mH}{{\mathcal{H}}}
\newcommand{\mL}{{\mathcal{L}}}
\newcommand{\bi}{{\bf i}}
\newcommand{\bv}{\bm{v}}
\newcommand{\be}{\bm{e}}
\newcommand{\bx}{\bm{x}}
\newcommand{\by}{\bm{y}}
\newcommand{\mF}{\mathcal{F}}
\newcommand{\mC}{\mathcal{C}}
\newcommand{\mI}{\mathcal{I}}
\newcommand{\bA}{\bm{A}}
\newcommand{\bB}{\bm{B}}
\newcommand{\bU}{\bm{U}}
\newcommand{\bI}{\bm{I}}
\newcommand{\bH}{\bm{H}}
\newcommand{\bJ}{\bm{J}}
\newcommand{\bC}{\bm{C}}
\newcommand{\bS}{\bm{S}}
\newcommand{\bbe}{\bm{e}}
\newcommand{\bc}{\bm{c}}
\newcommand{\bs}{\bm{s}}
\newcommand{\bF}{\bm{F}}
\newcommand{\bQ}{\bm{Q}}
\newcommand{\blam}{\bm{\lambda}}
\newcommand{\contran}{\mathsf{H}}
\newcommand{\diag}{{\rm diag}}
\newcommand{\fft}{{\rm fft}}
\newcommand{\ifft}{{\rm ifft}}
\newcommand{\diff}{{\rm d}}
\newcommand{\MtoV}{{\rm vec}}
\begin{document}
\title[Splitting ADI scheme for fractional Laplacian wave equations]{Splitting ADI scheme for fractional Laplacian wave equations}

\author[1]{\fnm{Tao} \sur{Sun}}\email{airfyst@163.com}

\author*[1]{\fnm{Hai-Wei} \sur{Sun}}\email{hsun@um.edu.mo}
\equalcont{These authors contributed equally to this work.}

\affil*[1]{\orgdiv{Department of Mathematics}, \orgname{University of Macau}, \orgaddress{ \city{Macao}, \postcode{999078}, \country{China}}}

\abstract{In this paper, we investigate the numerical solution of the two-dimensional fractional Laplacian wave equations.
After splitting out the Riesz fractional derivatives from the fractional Laplacian, we treat the Riesz fractional derivatives with an implicit scheme while solving the rest part explicitly. Thanks to the tensor structure of the Riesz fractional derivatives, a splitting alternative direction implicit (S-ADI) scheme is proposed by incorporating an ADI remainder. 
Then the Gohberg-Semencul formula, combined with fast Fourier transform, is proposed to solve the derived Toeplitz linear systems at each time integration. 
Theoretically, we demonstrate that the S-ADI scheme is unconditionally stable and possesses second-order accuracy. Finally, numerical experiments are performed to demonstrate the accuracy and efficiency of the S-ADI scheme.}

\keywords{Operator splitting, alternative direction implicit scheme, Gohberg-Semencul formula, fractional Laplacian wave equation}

\pacs[MSC Classification (2020)]{65F05, 65M06, 65M12, 65M15}
\maketitle

\section{Introduction}\label{section:introduction}

In this paper, we consider the numerical solution for the following initial boundary value problem of the fractional Laplacian wave equation:
\begin{numcases}{}
\partial_t^2 u(x,y,t) = -\kappa(-\Delta)^{\frac{\alpha}{2}} u(x,y,t) + g(u(x,y,t)),\quad (x,y)\in \Omega,~t\in(0,T], \label{eq:wave_equation}\\
u(x,y,t) = 0, \hspace{5.26cm}\quad (x,y)\in \Omega^c,~t\in[0,T],\\
u(x,y,0) = \varphi_1(x,y), \hspace{4.21cm}\quad (x,y)\in\Omega,\\
\partial_{t} u(x,y,0) = \varphi_2(x,y), \hspace{3.91cm}\quad (x,y)\in\Omega,\label{eq:initial condition 2}
\end{numcases}
where $\alpha\in(1,2)$ is the order of the fractional Laplacian, $T>0$ is the length of the time interval, $\Omega=(a,b)^2$ is a square domain in $\IR^2$, $\Omega^c=\IR^2\setminus\Omega$ and $\varphi_1,\varphi_2$ are two given functions on $\Omega$.
For a function $v:\IR^d\mapsto \IR$, the fractional Laplacian is defined by \cite{Hao_Zhang_2021}
\[-(-\Delta)^{\frac{\alpha}{2}} v(\bx):=-\frac{2^\alpha \Gamma(\frac{\alpha+d}{2})}{\pi^{\frac{d}{2}}|\Gamma(-\frac{\alpha}{2})|}\text{p.v.}\int_{\IR^d} \frac{v(\bx)-v(\by)}{ |\bx-\by|^{d+\alpha} } \diff \by,\quad \bx\in \IR^d,\]
where `p.v.' means that this integral takes its Cauchy principal value.
When $d=1$, the fractional Laplacian is equivalent to the Riesz fractional derivative \cite{Celik_Duman_2012}
\[\partial_{x}^{\alpha}v(x)= -\frac{\partial_{x}^2\int\nolimits_{\IR}|x-\xi|^{1-\alpha}v(\xi)\diff\xi}{2\cos(\alpha\pi/2)\Gamma(2-\alpha)},\quad x\in\IR.\]

The fractional Laplacian wave equation \eqref{eq:wave_equation} with $\alpha=2$ is the classical integer order wave equation, which has wide-ranging applications in quantum field theory \cite{Peskin_1995}, particle physics \cite{Griffiths_1987}, and superconductor modelling \cite{Scott_2003}. In recent years, fractional calculus has garnered significant attention from researchers due to its successful applications in anomalous dispersion, random walk, and control systems \cite{Sun_Zhang_2018,Metzler_Klafter_2000,Sun_Zhang_2014}. As a result, the space fractional wave equation, which is a generalization of the integer order wave equation, have been extensively studied.
For the one-dimensional (1D) fractional Laplacian wave equation, also known as the 1D Riesz fractional wave equation, numerous related works exist \cite{Xing_Wen_2018,Macias_Hendy_2018,Macias_Hendy_2018_2,Fu_Zhao_2022}. For the two-dimensional (2D) case, Wang and Shi \cite{Wang_Shi_2021} proposed an energy-conserving exponential scalar auxiliary variable spectral scheme for the fractional Laplacian wave equation on an unbounded domain. Hu et al. \cite{Hu_Cai_2021} employed a dissipation-preserving Crank-Nicolson pseudo-spectral method to solve the fractional Laplacian sine-Gordon equation with damping. Guo et al. \cite{Guo_Yan_2022} combined the Crank-Nicolson scheme with exponential scalar auxiliary variable technique in the time direction and employed spectral-Galerkin method in the space direction to solve the coupled fractional Laplacian Klein-Gordon equation. 
However, the error estimations in those works are incomplete.
Recently, based on the progress made in \cite{Hao_Zhang_2021} about the finite difference approximation of the multi-dimensional fractional Laplacian operator, Hu et al. \cite{Hu_Cai_2021_2} proposed a dissipation-preserving difference scheme for the damped fractional Laplacian wave equation and established the unconditional stability and convergence of the proposed scheme.

It is important to note that solving linear systems generated by multi-dimensional space fractional derivatives is a computationally expensive task, as those systems typically have large and dense coefficient matrices. 
To address this issue, the alternative direction implicit (ADI) technique, proposed by Peaceman and Rachford \cite{Peaceman_Rachford_1955}, has been widely used. This technique transforms the multi-dimensional time-dependent partial differential equation into a series of one-dimensional problems at each time integration, effectively reducing the computational cost of solving the resulting linear systems. The ADI technique is particularly useful when solving Riesz fractional differential equations, and has been successfully applied in various studies \cite{Meerschaert_Scheffler_2006,Zeng_Liu_2014,Lin_Ng_2019}. However, it cannot be directly applied to fractional Laplacian differential equations due to their {\it non-tensorial structure}.

To address this issue, we introduce the operator splitting technique \cite{Rosales_Seibold_2017,Seibold_Shirokoff_2019,Wang_Zhang_2020,Sun_Sun}, which can alter the coefficient matrix of the linear systems appeared in numerical scheme by properly splitting the linear operator. In this paper, we split out the discrete Riesz fractional derivative from the spatial discretized fractional Laplacian wave equation. Then we use a second-order implicit-explicit (IMEX) scheme to discretize the resulted second-order ordinary differential equation systems. Then we add a remainder term into the IMEX scheme to achieve the so-call S-ADI scheme. The  coefficient matrices of the linear systems appearing in non-ADI scheme, IMEX scheme and S-ADI scheme are presented in Figure \ref{fig:idea}, where $\bI,\bI_x,\bI_y$ are identity matrices, $\bA$ denotes the coefficient matrix of the discrete fractional Laplacian, and $\bA_x,\bA_y$ represent the coefficient matrices of the discrete Riesz fractional derivatives in $x$-direction and $y$-direction, respectively. The solutions of the linear systems generated by the S-ADI scheme are obtained by solving a series of 1D problems, which makes it feasible to use Gohberg-Semencul (GS) formula  \cite{Lee_PAng_2010,Pang_Sun_2011,Chen_Sun_2021} to solved the derived Toeplitz linear systems. It is the main advantage compared with the non-ADI scheme.

\begin{figure}\label{fig:idea}
\centering
\includegraphics[width=0.45\textwidth]{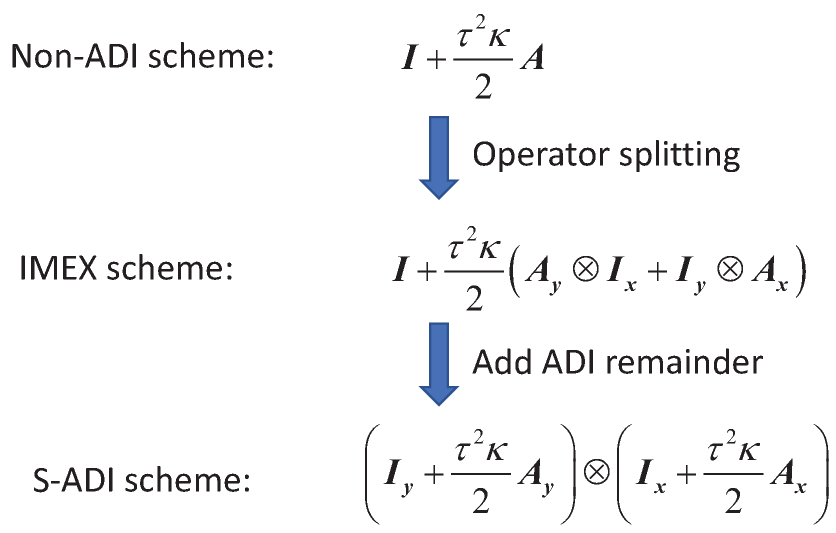}
\caption{The coefficient matrices of the linear systems from different numerical schemes. }
\label{grids_2d}
\end{figure}

In this study, we present a second-order accurate S-ADI scheme for solving the fractional Laplacian wave equation. The fast implementation based on GS formula is discussed in detail. Through rigorous analysis, we establish the unconditional stability of the S-ADI scheme. Moreover,  we demonstrate its unconditional convergence. Finally, we conduct numerical experiments to showcase the accuracy and efficiency of the proposed S-ADI scheme.

The rest of this paper is arranged as follows. In Section \ref{section:construction}, we construct the S-ADI scheme for the fractional Laplacian wave equation. 
Section \ref{section:implementation} discusses the fast implementation of the S-ADI scheme based on GS formula.  
In Section \ref{section:stability}, the unconditional stability of the S-ADI scheme is rigorously proven. After estimating the truncation error, we demonstrate the unconditional convergence of the S-ADI scheme in Section \ref{section:error}. 
In Section \ref{section:numerical}, the proposed scheme is applied to solve fractional Laplacian sine-Gordon and Klein-Gordon equations, and compared with the non-ADI scheme. 
Finally, conclusion of current work and future researches are outlined in Section \ref{section:conclusion}.

\section{Construction of S-ADI scheme}\label{section:construction}
Since the fractional Laplacian is a nonlocal operator, its discrete version cannot be represented as a tensor format in  multi-dimensional case. Hence, it is unlikely to construct ADI scheme directly for the differential equation with the fractional Laplacian. 
Nevertheless, we note that the operator splitting technique can alter the linear systems generated by numerical scheme without damaging the numerical stability.  
To achieve that, the splitter should possess strong stiffness that is not less than the original stiff operator. It is worth to mention that the fractional Laplacian operator has a twin brother,  the Riesz fractional derivative, which has the tensor structure. They are equivalent in 1D case, and different from each other in multi-dimensional case. 
Their discrete versions, however, possess similar stiffness. Therefore, after discretizing the fractional Laplacian, we split out the discrete Riesz fractional derivative from the linear part, and use IMEX scheme to discretize the time derivative. 
By adding an ADI remainder to the IMEX scheme, we get a fully discrete S-ADI scheme for the fractional Laplacian wave equation \eqref{eq:wave_equation}.

For given positive integers $N,M$, denote the time step size by $\tau=T/M$ and the space step size by $h=(b-a)/(N+1)$. Define the discrete grid points in $\Omega\times[0,T]$ as $(x_i,y_j,t_n)=(a+ih,a+jh,n\tau)$ for $1\le i,j\le N$ and $0\le n\le M$.
We first discretize the fractional Laplacian operator in \eqref{eq:wave_equation}. In \cite{Hao_Zhang_2021}, Hao, Zhang and Du proposed a difference approximation:
\begin{equation}\label{eq:center_disccritzation_fractional_Laplacian}
L_h^\alpha v(x,y) :=\frac{1}{h^\alpha}\sum_{i,j\in\IZ}a_{ij}^{(\alpha)}v(x+ih,y+jh)\approx (-\Delta)^{\frac{\alpha}{2}} v(x,y),
\end{equation}
where
\[a_{ij}^{(\alpha)}:=\frac{1}{4\pi^2}\int_{-\pi}^{\pi}\int_{-\pi}^{\pi} \left(4\sin^2\left(\frac{\eta}{2}\right) + 4\sin^2\left(\frac{\xi}{2}\right) \right)^{\frac{\alpha}{2}} \exp(-\bi (i\eta+j\xi))\diff \eta \diff \xi. \]
Apply \eqref{eq:center_disccritzation_fractional_Laplacian} to \eqref{eq:wave_equation} with $(x,y,t)=(x_i,y_j,t_n)$, we have
\begin{equation}\label{eq:space-discretized wave equation}
\partial_t^2 u(x_i,y_j,t_n) = -\kappa L_h^\alpha u(x_i,y_j,t_n) + g(u(x_i,y_j,t_n)) +R^{n,1}_{ij},\quad 0\le n\le M,
\end{equation}
where
\[R^{n,1}_{ij} =\kappa L_h^\alpha u(x_i,y_j,t_n) -\kappa(-\Delta)^{\frac{\alpha}{2}} u(x_i,y_j,t_n).\]
To split the linear operator $ L_h^\alpha$, we introduce the fractional central difference operators for Riesz fractional derivatives \cite{Celik_Duman_2012}:
\begin{equation}\label{eq:single_discrete_Riesz}
\delta_x^\alpha v(x,y):=\frac{1}{h^\alpha}\sum_{i\in\IZ} a^{(\alpha)}_{i} v(x+ih,y),\quad \delta_y^\alpha v(x,y):=\frac{1}{h^\alpha}\sum_{i\in\IZ} a^{(\alpha)}_{i} v(x,y+ih),
\end{equation}
where
\begin{equation}\label{eq:single coefficient discrete Riesz}
a^{(\alpha)}_{i}:=\frac{1}{2\pi}\int_{-\pi}^{\pi} \left(4\sin^2\left(\frac{\eta}{2}\right) \right)^{\frac{\alpha}{2}} \exp(-\bi i\eta)\diff \eta=\frac{(-1)^i\Gamma(\alpha+1)}{\Gamma(\alpha/2-i+1)\Gamma(\alpha/2+i+1)}.
\end{equation}
Split out the discrete 2D Riesz fractional difference operator $\delta_x^\alpha+\delta_y^{\alpha}$ from  $ L_h^\alpha$. 
Then the space discretized wave equation \eqref{eq:space-discretized wave equation} is rewritten as
\begin{equation}\label{eq:after_use stiff cutter}
\partial_t^2 U^n_{ij} =-\kappa(\delta_x^\alpha+\delta_y^{\alpha})U^n_{ij} -\kappa (L_h^\alpha-\delta_x^\alpha-\delta_y^{\alpha}) U^n_{ij} + g(U^n_{ij}) +R^{n,1}_{ij},
\end{equation}
where $U^n_{ij}:=u(x_i,y_j,t_n)$.

In Figure \ref{fig:points contribution of difference operators}, we illustrate the contribution points of operators $\delta^\alpha_x+\delta^\alpha_y$ and $L_h^\alpha$ on a 2D uniform mesh. 
We  observe that all grid points in the domain contribute to the considered discrete point for the fractional Laplacian operator, while only the points in the horizontal and vertical directions contribute for the Riesz fractional derivative operator. 
This fundamental difference between those two operators is crucial and enables the construction of an ADI type scheme by splitting out $\delta_x^\alpha+\delta_y^{\alpha}$.

\begin{figure}
    \centering

    \subfigure[$\delta^\alpha_x+\delta^\alpha_y$]{
    \begin{minipage}[c]{0.4\textwidth}
    \centering
    \includegraphics[width=1\textwidth]{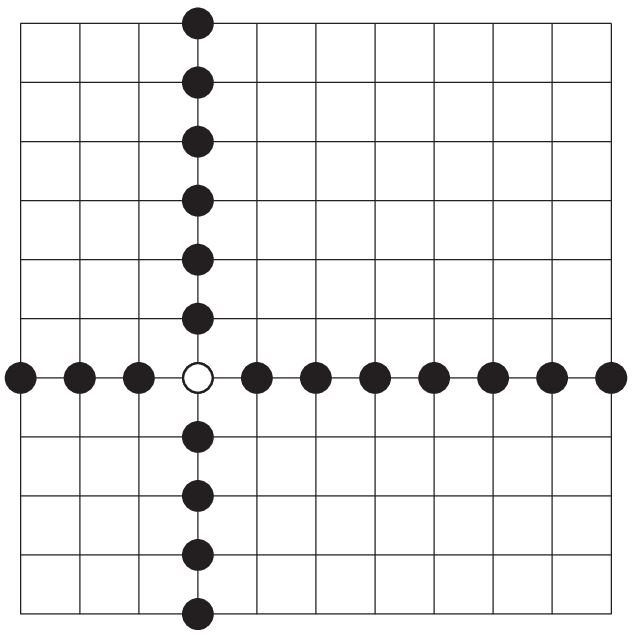}
    \end{minipage}
}
\!\!\!
   \subfigure[$L_h^\alpha$]{
    \begin{minipage}[c]{0.4\textwidth}
    \centering
    \includegraphics[width=1\textwidth]{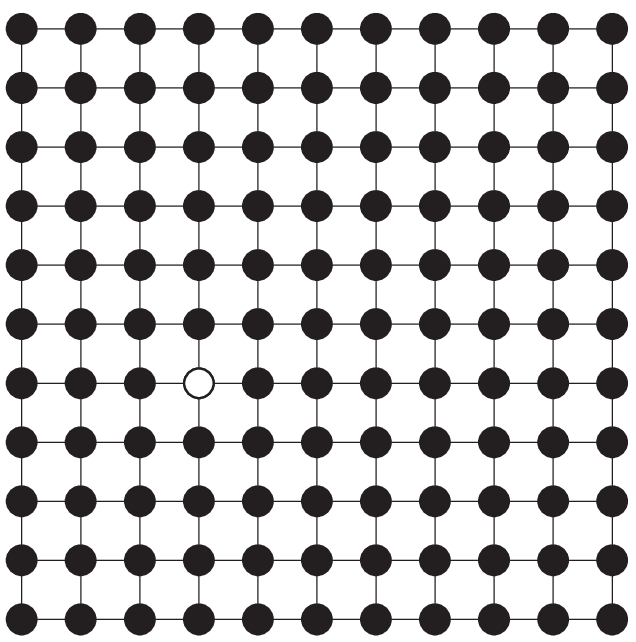}
    \end{minipage}
}
    \caption{The contribution points of the fractional difference operators: the solid point contributes to the hollow point.}
    \label{fig:points contribution of difference operators}
\end{figure}

Now consider two cases: $n\ge1$ and $n=0$.
For $n\ge1$, applying second-order central difference formula $\frac{U^{n+1}_{ij}-2U^{n}_{ij}+U^{n-1}_{ij}}{\tau^2}$ to approximate $\partial_t^2 U^n_{ij}$ and middle point formula $(\delta_x^\alpha+\delta_y^{\alpha})\frac{U^{n+1}_{ij}+U^{n-1}_{ij}}{2}$ to approximate $(\delta_x^\alpha+\delta_y^{\alpha})U^n_{ij}$ gives the discretization:
\begin{equation}\label{eq:before ADI}
\begin{split}
\frac{U^{n+1}_{ij}-2U^{n}_{ij}+U^{n-1}_{ij}}{\tau^2} 
=&-\kappa(\delta_x^\alpha+\delta_y^{\alpha})\frac{U^{n+1}_{ij}+U^{n-1}_{ij}}{2} -\kappa (L_h^\alpha-\delta_x^\alpha-\delta_y^{\alpha}) U^n_{ij} \\
& + g(U^n_{ij}) +R^{n,1}_{ij}+R^{n,2}_{ij},
\end{split}
\end{equation}
where
\[R^{n,2}_{ij}:=\frac{U^{n+1}_{ij}-2U^{n}_{ij}+U^{n-1}_{ij}}{\tau^2}-\partial_t^2 U^n_{ij}  +\kappa(\delta_x^\alpha+\delta_y^{\alpha})\frac{U^{n+1}_{ij}-2U^n_{ij}+U^{n-1}_{ij}}{2}.\]
To achieve the S-ADI scheme, we add a small perturbation term $R^{n,3}_{ij}:=\frac{\kappa^2\tau^2}{4}\delta_x^\alpha \delta_y^\alpha(U^{n+1}_{ij}-2U^{n}_{ij}+U^{n-1}_{ij})$ to \eqref{eq:before ADI}, which gives
\begin{equation}\label{eq:ADI with truncation error}
\begin{split}
\frac{U^{n+1}_{ij}-2U^{n}_{ij}+U^{n-1}_{ij}}{\tau^2} =&-\kappa(\delta_x^\alpha+\delta_y^{\alpha})\frac{U^{n+1}_{ij}\!+\!U^{n-1}_{ij}}{2} -\frac{\kappa^2\tau^2}{4}\delta_x^\alpha \delta_y^\alpha(U^{n+1}_{ij}\!-\!2U^{n}_{ij}\!+\!U^{n-1}_{ij})\\
 &-\kappa (L_h^\alpha-\delta_x^\alpha-\delta_y^{\alpha}) U^n_{ij} + g(U^n_{ij}) +R^{n,1}_{ij}+R^{n,2}_{ij}+R^{n,3}_{ij}.
\end{split}\end{equation}
By dropping the truncation error $R^{n,1}_{ij}+R^{n,2}_{ij}+R^{n,3}_{ij}$ and substituting the analytical solution $U^n_{ij}$ by the approximate solution $u^n_{ij}$, the S-ADI scheme is obtained  for the fractional Laplacian wave equation \eqref{eq:wave_equation}:
\begin{equation}\label{eq:ADI scheme}
\begin{split}
\frac{u^{n+1}_{ij}-2u^{n}_{ij}+u^{n-1}_{ij}}{\tau^2} =&-\kappa(\delta_x^\alpha+\delta_y^{\alpha})\frac{u^{n+1}_{ij}+u^{n-1}_{ij}}{2} -\frac{\kappa^2\tau^2}{4}\delta_x^\alpha \delta_y^\alpha(u^{n+1}_{ij}-2u^{n}_{ij}+u^{n-1}_{ij})\\
 &-\kappa (L_h^\alpha-\delta_x^\alpha-\delta_y^{\alpha}) u^n_{ij} + g(u^n_{ij}), \quad n\ge1.
\end{split}\end{equation}

Since \eqref{eq:ADI scheme} is a three-step scheme, we still require a two-step scheme to obtain the numerical solution at $t=\tau$.
For $n=0$, we use $(U^{1}_{ij}-U^{0}_{ij}-\tau \partial_t U^0_{ij})/(\tau^2/2)$ and $(\delta_x^\alpha+\delta_y^\alpha)U^1_{ij}$ to approximate $\partial_t^2 U^0_{ij}$ and $(\delta_x^\alpha+\delta_y^\alpha)U^0_{ij}$ in \eqref{eq:after_use stiff cutter}, respectively:
\begin{equation}\label{eq:before ADI initial}
\frac{U^{1}_{ij}-U^{0}_{ij}-\tau \partial_t U^0_{ij}}{\tau^2/2} =-\kappa(\delta_x^\alpha+\delta_y^{\alpha})U^{1}_{ij} -\kappa (L_h^\alpha-\delta_x^\alpha-\delta_y^{\alpha}) U^0_{ij} + g(U^0_{ij}) +R^{0,1}_{ij}+R^{0,2}_{ij},
\end{equation}
where
\[R^{0,2}_{ij}:=\frac{U^{1}_{ij}-U^{0}_{ij}-\tau \partial_t U^0_{ij}}{\tau^2/2}-\partial_t^2 U^0_{ij}  +\kappa(\delta_x^\alpha+\delta_y^{\alpha})(U^{1}_{ij}-U^{0}_{ij}).\]
Similarly, to construct the ADI scheme, adding a small perturbation term $R^{0,3}_{ij}:=\frac{\kappa^2\tau^2}{2}\delta_x^\alpha \delta_y^\alpha(U^{1}_{ij}-U^{0}_{ij})$ to \eqref{eq:before ADI initial} gives
\begin{equation}\label{eq:ADI with truncation error initial}
\begin{split}
\frac{U^{1}_{ij}-U^{0}_{ij}-\tau \partial_t U^0_{ij}}{\tau^2/2} =&-\kappa(\delta_x^\alpha+\delta_y^{\alpha})U^{1}_{ij} -\frac{\kappa^2\tau^2}{2}\delta_x^\alpha \delta_y^\alpha(U^{1}_{ij}-U^{0}_{ij})\\
 &-\kappa (L_h^\alpha-\delta_x^\alpha-\delta_y^{\alpha}) U^0_{ij} + g(U^0_{ij}) +R^{0,1}_{ij}+R^{0,2}_{ij}+R^{0,3}_{ij}.
\end{split}\end{equation}
By dropping the truncation error $R^{0,1}_{ij}+R^{0,2}_{ij}+R^{0,3}_{ij}$ and replacing the analytical solution $U^n_{ij}$ by the numerical approximation $u^n_{ij}$ with $n=0,1$, we obtain the S-ADI scheme for the fractional Laplacian wave equation \eqref{eq:wave_equation} at $t=\tau$:
\begin{equation}\label{eq:ADI scheme initial}
\begin{split}
\frac{u^{1}_{ij}-u^{0}_{ij}-\tau \partial_t u^0_{ij}}{\tau^2/2} =&-\kappa(\delta_x^\alpha+\delta_y^{\alpha})u^{1}_{ij} -\frac{\kappa^2\tau^2}{2}\delta_x^\alpha \delta_y^\alpha(u^{1}_{ij}-u^{0}_{ij})\\
 &-\kappa (L_h^\alpha-\delta_x^\alpha-\delta_y^{\alpha}) u^0_{ij} + g(u^0_{ij}),
\end{split}\end{equation}
where $\partial_t u^0_{ij}=\varphi_2(x_i,y_j)$ and $u^0_{ij}= \varphi_1(x_i,y_j)$.

\section{Fast implementation based on GS formula}\label{section:implementation}
It is worthy to mention that S-ADI schemes \eqref{eq:ADI scheme} and \eqref{eq:ADI scheme initial} can be equivalently rewritten to
\begin{equation}\label{eq:ADI scheme equivalent}
\left(1+\frac{\tau^2\kappa}{2}\delta_x^\alpha\right)\left(1+\frac{\tau^2\kappa}{2}\delta_y^\alpha\right)\hat u^n_{ij}=B^n_{ij},
\end{equation}
where
\[\hat u^n_{ij}:=\!\left\{
\begin{array}{ll}
u^{n+1}_{ij}-2u^{n}_{ij}+u^{n-1}_{ij},&n\ge1,\\
u^1_{ij}-u^0_{ij},&n=0,
\end{array}
\right. ~~ B^n_{ij}:=\!\left\{
\begin{array}{ll}
-\tau^2\kappa L_h^\alpha u^n_{ij}+\tau^2 g(u^n_{ij}),&n\ge1,\\
\tau\partial_t u^0_{ij}-\frac{\tau^2}{2}\kappa L_h^\alpha u^0_{ij}+\frac{\tau^2}{2} g(u^0_{ij}),&n=0.
\end{array}
\right.\]
To present \eqref{eq:ADI scheme equivalent} as matrix-vector format, we introduce some notations.
Let $\bB^{(n)}$, $\bU^{(n)}$, $\bA_x$, $\bA_y$, $\bI_x$ and $\bI_y$ be matrices in $\IR^{N\times N}$, 
$[\bB^{(n)}]_{ij}=B^n_{ij}$, $[\hat\bU^{(n)}]_{ij}=\hat u^n_{ij}$, $[\bA_x^{(n)}]_{ij}=[\bA_y^{(n)}]_{ij}=a^{(\alpha)}_{|i-j|}$, $\bI_x=\bI_y$ is the identity matrix. 
%Note that if the grid points numbers in $x$-direction and $y$-direction, the dimensions of $\bA_x,\bI_x$ are different from $\bA_y,\bI_y$. Without loss of generality, in this paper, we only consider the case that the grid points numbers in different space directions are equal.

Then we transform \eqref{eq:ADI scheme equivalent}  into the matrix-vector format:
\begin{equation}\label{eq:ADI matrix vector}
  \left(\bI_y+ \frac{\tau^2\kappa}{2}\bA_y\right)\otimes\left(\bI_x+ \frac{\tau^2\kappa}{2}\bA_x\right) \MtoV(\hat\bU^{(n)})= \MtoV(\bB^{(n)}),   
\end{equation}
where $\MtoV(\cdot)$ is the vectorization of given matrix. 
To obtain right-hand side term $\bB^{(n)}$, one need compute $L_h^{\alpha}u^n_{ij}$, which is equivalent to calculate the product of a block Toeplitz matrix with Toeplitz block and a given vector. We compute it by embedding the matrix into a block-circulant-circulant-block matrix and using fast Fourier transform (FFT). For more detail, we refer to \cite{Hao_Zhang_2021}.

In following, we focus on how to solve the linear systems \eqref{eq:ADI matrix vector} by GS formula.
Thanks to the ADI technique, the coefficient matrix in \eqref{eq:ADI matrix vector} is the Kronecker product of two Toeplitz matrices. By the properties of Kronecker product, we have
\[\begin{split}
  \hat\bU^{(n)} =& \left(\bI_x+ \frac{\tau^2\kappa}{2}\bA_x\right)^{-1}\bB^{(n)}\left(\bI_y+ \frac{\tau^2\kappa}{2}\bA_y\right)^{-1} =\bH^{-1}[\bH^{-1}(\bB^{(n)})^\intercal]^\intercal,
\end{split}\]
where $\bH:=\bI_x+ \frac{\tau^2\kappa}{2}\bA_x$ is a symmetric positive definite (SPD) Toeplitz matrix. Hence for given vectors,  $\bH^{-1}\bv$ is required to be computed $2NM$ times in the whole computation process. In this situation, the popular iterative methods, such as preconditioned conjugate gradient method, are not efficient enough, because the coefficient matrix $\bH$ keeps unchanged. To utilize this characteristic more sufficiently, we introduce the following GS formula, which offers an explicit expression of the inverse of a SPD Toeplitz matrix multiplying a given vector.
\begin{proposition}[\cite{Pang_Sun_2011}]\label{propostition:GS formula}
Let $\bH$ be a SPD Toeplitz matrix and $\bc:=[p_1,p_2,\ldots,p_N]^\intercal=\bH^{-1}\bbe_1$,
 where $\bbe_1$ is the first column of identity matrix. Then for any given vector $\bv\in\IR^N$,
 \[ \bH^{-1} \bv = \Re(\hat \bv)+\bJ \Im(\hat \bv), \]
 where $\bJ$ is the anti-identity matrix, $\Re(\cdot)$ and $\Im(\cdot)$ present the real and imaginary parts, respectively, of given element, and 
$\hat \bv:= \frac{1}{2p_1}\bC\bS(\bv+\bi \bJ\bv)$,
 in which $\bC$ is the circulant matrix with $\bc$ as its first column, while $\bS$ is the skew-circulant matrix whose first column is $ \bs:=[p_1,-p_N,-p_{N-1},\ldots,-p_2]^\intercal$.
\end{proposition}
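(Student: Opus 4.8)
The plan is to reduce the claimed vector formula to an equivalent matrix identity, and then to obtain that identity by marrying the Gohberg--Semencul representation of a Toeplitz inverse to the elementary algebra of circulant, skew-circulant and triangular Toeplitz matrices. First, since $\bC$, $\bS$, $\bv$ and $\bJ\bv$ are real and $p_1=\bbe_1^{\intercal}\bH^{-1}\bbe_1>0$ (as $\bH$ is SPD, so in particular $p_1\neq0$), I would write $\hat\bv=\frac{1}{2p_1}\bC\bS\bv+\frac{\bi}{2p_1}\bC\bS\bJ\bv$, whence $\Re(\hat\bv)=\frac{1}{2p_1}\bC\bS\bv$ and $\Im(\hat\bv)=\frac{1}{2p_1}\bC\bS\bJ\bv$; the stated formula is therefore equivalent to
\[
\bH^{-1}=\frac{1}{2p_1}\bigl(\bC\bS+\bJ\bC\bS\bJ\bigr),
\]
and it suffices to establish this.

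Next I would identify the relevant triangular factors. Let $\bm{L}_1$ be the lower triangular Toeplitz matrix with first column $\bc$ and $\bm{L}_2$ the strictly lower triangular Toeplitz matrix with first column $[0,p_N,p_{N-1},\ldots,p_2]^{\intercal}$. Comparing entries, the strictly upper (``wrap-around'') part of the circulant $\bC$ is precisely $\bm{L}_2^{\intercal}$, so $\bC=\bm{L}_1+\bm{L}_2^{\intercal}$; and, matching the sign pattern imposed by the prescribed first column $\bs=[p_1,-p_N,\ldots,-p_2]^{\intercal}$, one finds $\bS=\bm{L}_1^{\intercal}-\bm{L}_2$. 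Because conjugation by $\bJ$ sends any Toeplitz matrix (in particular a triangular Toeplitz matrix), as well as any circulant or skew-circulant matrix, to its transpose, we get $\bJ\bC\bJ=\bC^{\intercal}=\bm{L}_1^{\intercal}+\bm{L}_2$ and $\bJ\bS\bJ=\bS^{\intercal}=\bm{L}_1-\bm{L}_2^{\intercal}$, and hence
\[
\bC\bS+\bJ\bC\bS\bJ=(\bm{L}_1+\bm{L}_2^{\intercal})(\bm{L}_1^{\intercal}-\bm{L}_2)+(\bm{L}_1^{\intercal}+\bm{L}_2)(\bm{L}_1-\bm{L}_2^{\intercal}).
\]

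Now I would expand both products and note that lower triangular Toeplitz matrices commute — $\bm{L}_1$ and $\bm{L}_2$ are polynomials in the nilpotent lower-shift $Z$, so $\bm{L}_1\bm{L}_2=\bm{L}_2\bm{L}_1$ and, on transposing, $\bm{L}_1^{\intercal}\bm{L}_2^{\intercal}=\bm{L}_2^{\intercal}\bm{L}_1^{\intercal}$ — so that the mixed terms cancel in pairs and
\[
\bC\bS+\bJ\bC\bS\bJ=\bigl(\bm{L}_1\bm{L}_1^{\intercal}-\bm{L}_2\bm{L}_2^{\intercal}\bigr)+\bigl(\bm{L}_1^{\intercal}\bm{L}_1-\bm{L}_2^{\intercal}\bm{L}_2\bigr).
\]
Finally, I would invoke the symmetric Gohberg--Semencul formula: since $\bH$ is symmetric Toeplitz, $\bH^{-1}\bbe_N=\bJ\bH^{-1}\bbe_1=\bJ\bc$, which collapses the two generating vectors of the classical formula to the single vector $\bc$ and furnishes $\bH^{-1}$ both in the ``lower--upper'' form $\bH^{-1}=\frac{1}{p_1}(\bm{L}_1\bm{L}_1^{\intercal}-\bm{L}_2\bm{L}_2^{\intercal})$ and in the ``upper--lower'' form $\bH^{-1}=\frac{1}{p_1}(\bm{L}_1^{\intercal}\bm{L}_1-\bm{L}_2^{\intercal}\bm{L}_2)$. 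Substituting these into the previous display yields $\bC\bS+\bJ\bC\bS\bJ=2p_1\bH^{-1}$, as required.

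The one substantive ingredient is the classical Gohberg--Semencul representation of a Toeplitz inverse (resting on the rank-two displacement $\bH-Z\bH Z^{\intercal}$), which I would simply quote from the literature; everything else is bookkeeping. I therefore expect the only real obstacle to be the index accounting: verifying the decompositions $\bC=\bm{L}_1+\bm{L}_2^{\intercal}$ and $\bS=\bm{L}_1^{\intercal}-\bm{L}_2$ with the precise signs dictated by $\bs$, and checking that the mixed products indeed cancel in pairs. If one wishes to use only the ``lower--upper'' Gohberg--Semencul form, the single extra fact needed is the auxiliary identity $\bm{L}_1^{\intercal}\bm{L}_1-\bm{L}_1\bm{L}_1^{\intercal}=\bm{L}_2^{\intercal}\bm{L}_2-\bm{L}_2\bm{L}_2^{\intercal}$, provable by a direct entrywise computation — but this is no real shortcut.
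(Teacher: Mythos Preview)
The paper does not prove this proposition: it is stated with a citation to \cite{Pang_Sun_2011} and used as a black box. Your proposal, by contrast, supplies a complete self-contained argument, and it is correct. The reduction to the matrix identity $\bH^{-1}=\frac{1}{2p_1}(\bC\bS+\bJ\bC\bS\bJ)$ is immediate from the reality of $\bC,\bS,\bv$; the splittings $\bC=\bm{L}_1+\bm{L}_2^{\intercal}$ and $\bS=\bm{L}_1^{\intercal}-\bm{L}_2$ are exactly right for the stated first columns; the commutativity of lower triangular Toeplitz matrices kills the cross terms; and the two Gohberg--Semencul forms you invoke are both valid for symmetric Toeplitz $\bH$ --- indeed the ``upper--lower'' form follows from the ``lower--upper'' one by conjugating with $\bJ$ and using that $\bH^{-1}$, as the inverse of a symmetric Toeplitz matrix, is persymmetric ($\bJ\bH^{-1}\bJ=\bH^{-1}$). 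So there is nothing to compare against in the paper itself; your derivation simply fills in what the authors chose to quote.
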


Note that the circulant matrix $\bC$ and the skew-circulant matrix $\bS$ have the following properties \cite{Lu_Pang_2018}:
\begin{equation}\label{eq:circulant diagonlizable}
 \bC=\bF^\contran\diag(\blam_c)\bF,\quad \bS = \bQ^\contran\bF^\contran\diag(\blam_s)\bF\bQ, 
 \end{equation}
where $\bF$ is the discrete Fourier matrix, $\bQ$ is the diagonal matrix with 
\[[1,\exp(-\bi\pi/N),\exp(-\bi 2\pi/N),\ldots,\exp(-\bi(N-1)\pi/N)]\]
 as its diagonal entries, and $\blam_c,\blam_s$ are vectors in $\IR^N$ containing the eigenvalues of $\bC$ and $\bS$. 
For a given vector $\bv\in\IR^N$, one could compute $\bF^\contran \bv$ and $\bF \bv$ by the FFTs $\ifft(\bv)$ and $\fft(\bv)$, respectively, with the complexity of $\mO(N\log N)$. Combining with \eqref{eq:circulant diagonlizable}, we compute $\blam_c$ and $\blam_s$ by the following formulas:
 \begin{equation}\label{eq:eigenvalue of CS}
 \blam_c = \fft(\bc),\quad \blam_s=\fft(\bQ\bs).
 \end{equation}
And if $\blam_c$ and $\blam_s$ have been obtained, the matrix-vector products of $\bC$ and $\bS$ with any given vector $\bv$ could be fast computed by
\begin{equation}\label{eq:CS matrix vector product}
\bC\bv=\ifft( \blam_c\circ \fft(\bv) ),\quad \bS\bv=\bQ^\contran \ifft(\blam_s\circ\fft(\bQ\bv)).
\end{equation}
With \eqref{eq:CS matrix vector product}, we state the practical implementation of GS formula as Algorithm \ref{algorithm:GS}. 
It is clear that only four FFTs of size to $N$ are required to compute $\bH^{-1}\bv$ based on the promise that $\blam_c,\blam_s,p_1$ have been obtained. 

\begin{algorithm}
\caption{GS formula for SPD Toeplitz}\label{algorithm:GS}
\begin{algorithmic}
\Statex \textbf{Input:} $\blam_c,\blam_s,p_1,\bv$
\State $\bv^{(1)} \gets \frac{1}{2p_1}(\bv+\bi\bJ\bv)$
\State $\bv^{(2)} \gets \bQ^\contran \ifft(\blam_s\circ\fft(\bQ\bv^{(1)}))$
\State $\bv^{(3)} \gets \ifft( \blam_c\circ \fft(\bv^{(2)}) )$
\State $\bH^{-1}\bv \gets \Re(\hat \bv^{(3)})+\bJ \Im(\hat \bv^{(3)})$
\Statex \textbf{Output:} $\bH^{-1}\bv$
\end{algorithmic}
\end{algorithm}

Although Algorithm \ref{algorithm:GS} needs to run $2NM$ times, $\bH^{-1}\be_1$ and \eqref{eq:eigenvalue of CS} are required to calculated only once throughout the implementation of S-ADI scheme to obtain $p_1,\blam_c,\blam_s$. And we compute $\bH^{-1}\be_1$ by the preconditioned conjugate gradient method with $\tau$ preconditioner whose complexity is only $\mO(N\log N)$. 

\section{Stability}\label{section:stability}

This section addresses the numerical stability of the S-ADI scheme proposed in Section \ref{section:construction}. In practice, unconditional stability is crucial for numerical schemes as it ensures that the computation will not fail even for large step sizes. 
%Typically, achieving unconditional stability requires the implicit solution of the stiff term in the differential equation. However, we demonstrate that even if the stiff term $-\kappa (L_h^\alpha-\delta_x^\alpha-\delta_y^{\alpha}) u^n_{ij}$ in the S-ADI schemes \eqref{eq:ADI scheme} and \eqref{eq:ADI scheme initial} is treated with an explicit method, unconditional stability can still be achieved.

We first introduce some notations and the inner products with their corresponding norms. 
Denote the index set, $\mI_h:=\{(i,j)|~1\le i\le N,1\le j\le N\}$.
Then a set of grid functions is defined by 
\[W_h:=\{\{w_{ij}|~(i,j)\in\IZ^2\}|~w_{ij}=0,~(i,j)\in\IZ^2 \setminus \mI_h\}.\] 
For the grid functions $w_1,w_2\in W_h$,  the inner product and norm  are defined by
\[\langle w_1,w_2\rangle := h^2\sum_{i=1}^{N}\sum_{j=1}^{N}(w_1)_{ij}(w_2)_{ij},\quad \|w_1\|:=\sqrt{\langle w_1,w_1\rangle}.\]
The next lemma indicates the operators $\delta_x^\alpha+\delta_y^\alpha$, $\delta_x^\alpha\delta_y^\alpha$ and $L_h^\alpha$ are all self adjoint and positive definite. 
\begin{lemma}[\cite{Lin_Ng_2019,Hao_Zhang_2021}]
$\langle (\delta_x^\alpha+\delta_y^\alpha)\cdot,\cdot\rangle$, $\langle \delta_x^\alpha\delta_y^\alpha\cdot,\cdot\rangle$ and $\langle L_h^\alpha\cdot,\cdot\rangle$ are all the inner products for the grid functions in $W_h$.
\end{lemma}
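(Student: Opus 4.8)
The plan is to establish three things: that each of $\langle (\delta_x^\alpha+\delta_y^\alpha)\cdot,\cdot\rangle$, $\langle \delta_x^\alpha\delta_y^\alpha\cdot,\cdot\rangle$ and $\langle L_h^\alpha\cdot,\cdot\rangle$ is (i) bilinear and symmetric, and (ii) positive definite on $W_h$; bilinearity is immediate from the definitions of $\delta_x^\alpha$, $\delta_y^\alpha$, $L_h^\alpha$ and the inner product $\langle\cdot,\cdot\rangle$, so the real content is symmetry and positive definiteness. The unifying tool is the Fourier symbol. Each operator acts on $W_h$ by a (two-level) symmetric Toeplitz matrix whose generating symbol is nonnegative: for $\delta_x^\alpha$ the symbol is $h^{-\alpha}\bigl(4\sin^2(\eta/2)\bigr)^{\alpha/2}\ge 0$ by \eqref{eq:single coefficient discrete Riesz}, hence the symbol of $\delta_x^\alpha+\delta_y^\alpha$ is $h^{-\alpha}\bigl[(4\sin^2(\eta/2))^{\alpha/2}+(4\sin^2(\xi/2))^{\alpha/2}\bigr]$, the symbol of $\delta_x^\alpha\delta_y^\alpha$ is the product $h^{-2\alpha}(4\sin^2(\eta/2))^{\alpha/2}(4\sin^2(\xi/2))^{\alpha/2}$, and the symbol of $L_h^\alpha$ is $h^{-\alpha}\bigl(4\sin^2(\eta/2)+4\sin^2(\xi/2)\bigr)^{\alpha/2}$ by \eqref{eq:center_disccritzation_fractional_Laplacian}. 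All three are real, even, and strictly positive for $(\eta,\xi)\notin 2\pi\IZ^2$.

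First I would record symmetry. Since the coefficients satisfy $a^{(\alpha)}_{i}=a^{(\alpha)}_{-i}$ and $a^{(\alpha)}_{ij}=a^{(\alpha)}_{-i,-j}$ (each being the Fourier coefficient of a real even function of $(\eta,\xi)$), a direct index swap in the double sum defining $\langle\cdot,\cdot\rangle$ shows $\langle\delta_x^\alpha w_1,w_2\rangle=\langle w_1,\delta_x^\alpha w_2\rangle$ and likewise for $\delta_y^\alpha$ and $L_h^\alpha$; the zero-extension built into $W_h$ guarantees all shifted terms are well defined and the re-indexing is legitimate. Self-adjointness of $\delta_x^\alpha\delta_y^\alpha$ then follows because $\delta_x^\alpha$ and $\delta_y^\alpha$ commute (they act on disjoint coordinate directions) and each is self-adjoint. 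Symmetry of the bilinear forms is immediate from operator self-adjointness.

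Next I would prove positivity. The cleanest route is to extend each $w\in W_h$ by zero to a finitely supported function on $\IZ^2$ and pass to the (discrete-time) Fourier transform $\widehat{w}(\eta,\xi)=\sum_{i,j}w_{ij}e^{-\mathbf{i}(i\eta+j\xi)}$ on $(-\pi,\pi]^2$. By Parseval, for a symmetric convolution operator $T$ with nonnegative symbol $\sigma_T(\eta,\xi)$ one gets $\langle Tw,w\rangle=\frac{h^2}{4\pi^2}\int_{-\pi}^{\pi}\!\int_{-\pi}^{\pi}\sigma_T(\eta,\xi)\,|\widehat{w}(\eta,\xi)|^2\,\diff\eta\,\diff\xi\ge 0$. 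Applying this with the three symbols above gives nonnegativity of all three forms. For strict positivity, note $\widehat{w}$ is a nonzero trigonometric polynomial when $w\neq 0$, so it can vanish only on a set of measure zero, while each symbol vanishes only on the measure-zero set $2\pi\IZ^2\cap(-\pi,\pi]^2=\{(0,0)\}$; hence the integrand is positive on a set of positive measure and $\langle Tw,w\rangle>0$. Together with bilinearity and symmetry this shows each form is an inner product on $W_h$.

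The main obstacle is justifying the Fourier/Parseval manipulation cleanly for $L_h^\alpha$, whose symbol $(4\sin^2(\eta/2)+4\sin^2(\xi/2))^{\alpha/2}$ does not factor across directions, so one cannot reduce to a 1D argument; one must either invoke the two-dimensional symbol positivity (nonnegativity of $(4\sin^2(\eta/2)+4\sin^2(\xi/2))^{\alpha/2}$ is obvious, but confirming the Toeplitz-symbol correspondence \eqref{eq:center_disccritzation_fractional_Laplacian} with the claimed $a^{(\alpha)}_{ij}$ requires care about convergence of the double sum) or simply cite the prior works \cite{Lin_Ng_2019,Hao_Zhang_2021} for that specific operator, which the lemma already does. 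For $\delta_x^\alpha,\delta_y^\alpha$ one may alternatively quote the known fact \cite{Celik_Duman_2012} that the 1D fractional central-difference matrix with entries $a^{(\alpha)}_{|i-j|}$ is SPD for $\alpha\in(1,2)$, and then handle $\delta_x^\alpha+\delta_y^\alpha$ and $\delta_x^\alpha\delta_y^\alpha$ via the Kronecker-sum and Kronecker-product structure (sum and product of SPD matrices acting on the respective factors), which is the quickest finish and avoids re-deriving anything from scratch.
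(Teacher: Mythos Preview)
Your proposal is correct and in fact supplies more than the paper does: the paper states this lemma with a citation to \cite{Lin_Ng_2019,Hao_Zhang_2021} and gives no proof at all, so there is nothing to compare against beyond noting that your Fourier-symbol/Parseval argument (or, equivalently, the Kronecker-structure shortcut you mention at the end) is exactly the standard reasoning those references contain.

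One small inaccuracy worth fixing: you assert that ``each symbol vanishes only on the measure-zero set $2\pi\IZ^2\cap(-\pi,\pi]^2=\{(0,0)\}$.'' That is true for $\delta_x^\alpha+\delta_y^\alpha$ and for $L_h^\alpha$, but for $\delta_x^\alpha\delta_y^\alpha$ the symbol $h^{-2\alpha}\bigl(4\sin^2(\eta/2)\bigr)^{\alpha/2}\bigl(4\sin^2(\xi/2)\bigr)^{\alpha/2}$ vanishes on the whole cross $\{\eta=0\}\cup\{\xi=0\}$ inside $(-\pi,\pi]^2$, not just at the origin. This set still has Lebesgue measure zero, so your strict-positivity conclusion is unaffected; only the description of the zero set needs adjusting. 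If you prefer to avoid this nuance entirely, the Kronecker-product route you sketch at the end handles $\delta_x^\alpha\delta_y^\alpha$ cleanly: once the 1D Toeplitz matrix $\bA_x$ is known to be SPD (as in \cite{Celik_Duman_2012}), $\bA_y\otimes\bA_x$ is SPD automatically.
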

For ease of notation, we denote inner products and the corresponding norms as
\begin{gather*}
\langle\cdot,\cdot\rangle_{\tilde A} :=\langle (\delta_x^\alpha+\delta_y^\alpha)\cdot,\cdot\rangle  ,\quad
\langle\cdot,\cdot\rangle_{B} :=\langle \delta_x^\alpha\delta_y^\alpha\cdot,\cdot\rangle ,\quad
\langle\cdot,\cdot\rangle_A :=\langle L_h^\alpha\cdot,\cdot\rangle,\\
\|\cdot\|_{\tilde A}:=\sqrt{\langle \cdot,\cdot\rangle_{\tilde A}},\quad
\|\cdot\|_{B}:=\sqrt{\langle \cdot,\cdot\rangle_{B}},\quad
\|\cdot\|_A:=\sqrt{\langle \cdot,\cdot\rangle_A}.
\end{gather*}

The following lemma reveals the relationship between $\|\cdot\|_{\tilde A}$ and $\|\cdot\|_A$, which is critical for the unconditional stability of the S-ADI scheme.
\begin{lemma}[\cite{Li_Sun}]\label{lemma:norm_relationship_1}
For any grid function $w\in W_h$ and $\alpha\in(1,2)$, 
\[\|w\|_A^2 \le \|w\|_{\tilde A}^2.\]
\end{lemma}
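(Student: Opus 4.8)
The plan is to diagonalise all three operators simultaneously by the discrete Fourier transform and thereby reduce the claim to an elementary pointwise inequality between Fourier symbols. Fix $w\in W_h$; since $w$ is supported on the finite set $\mI_h$, every sum that occurs below is finite, so no convergence issue arises. I would set $\widetilde w(\eta,\xi):=\sum_{(i,j)\in\IZ^2}w_{ij}\,e^{-\bi(i\eta+j\xi)}$, a trigonometric polynomial, substitute the integral representations \eqref{eq:center_disccritzation_fractional_Laplacian} of $a^{(\alpha)}_{ij}$ and \eqref{eq:single coefficient discrete Riesz} of $a^{(\alpha)}_i$ into the convolution sums defining $L_h^\alpha w$, $\delta_x^\alpha w$ and $\delta_y^\alpha w$, shift the summation indices, and use that $w$ is real-valued, to arrive at the Parseval-type identities
\[
\|w\|_A^2=\frac{h^{2-\alpha}}{4\pi^2}\int_{-\pi}^{\pi}\!\!\int_{-\pi}^{\pi}\bigl(\rho_x^2+\rho_y^2\bigr)^{\alpha/2}\,\bigl|\widetilde w(\eta,\xi)\bigr|^2\,\diff\eta\,\diff\xi,
\]
\[
\|w\|_{\tilde A}^2=\frac{h^{2-\alpha}}{4\pi^2}\int_{-\pi}^{\pi}\!\!\int_{-\pi}^{\pi}\bigl(\rho_x^{\alpha}+\rho_y^{\alpha}\bigr)\,\bigl|\widetilde w(\eta,\xi)\bigr|^2\,\diff\eta\,\diff\xi,
\]
where $\rho_x:=2\bigl|\sin(\eta/2)\bigr|$ and $\rho_y:=2\bigl|\sin(\xi/2)\bigr|$; equivalently, $L_h^\alpha$ and $\delta_x^\alpha+\delta_y^\alpha$ act as Fourier multipliers with symbols $h^{-\alpha}(\rho_x^2+\rho_y^2)^{\alpha/2}$ and $h^{-\alpha}(\rho_x^\alpha+\rho_y^\alpha)$, respectively.

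Granting these two identities, the lemma follows immediately from the pointwise bound
\[
(\rho_x^2+\rho_y^2)^{\alpha/2}\ \le\ \rho_x^{\alpha}+\rho_y^{\alpha},\qquad \rho_x,\rho_y\ge 0,
\]
because it forces the integrand of $\|w\|_A^2$ to be dominated, pointwise in $(\eta,\xi)$, by that of $\|w\|_{\tilde A}^2$. To prove the bound, put $s:=\rho_x^2\ge0$, $t:=\rho_y^2\ge0$ and $p:=\alpha/2$, noting $p\in(\tfrac12,1)\subset(0,1)$ for $\alpha\in(1,2)$; the claim is then the subadditivity $(s+t)^p\le s^p+t^p$ of $r\mapsto r^{p}$ on $[0,\infty)$. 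This is trivial when $s+t=0$, and otherwise follows from $\theta^{p}\ge\theta$ (valid for $\theta\in[0,1]$, $0\le p\le1$) applied with $\theta=s/(s+t)$ and $\theta=t/(s+t)$:
\[
s^{p}+t^{p}=(s+t)^{p}\Bigl[\bigl(\tfrac{s}{s+t}\bigr)^{p}+\bigl(\tfrac{t}{s+t}\bigr)^{p}\Bigr]\ \ge\ (s+t)^{p}\Bigl[\tfrac{s}{s+t}+\tfrac{t}{s+t}\Bigr]=(s+t)^{p}.
\]

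The one place that requires care is the derivation of the two Parseval identities. I would have to track the normalising factors $1/(2\pi)$ and $1/(4\pi^2)$ coming from \eqref{eq:single coefficient discrete Riesz} and \eqref{eq:center_disccritzation_fractional_Laplacian}, the factor $h^{2-\alpha}$ produced by combining the $h^2$ in $\langle\cdot,\cdot\rangle$ with the $h^{-\alpha}$ inside the operators, and the sign convention $e^{-\bi(i\eta+j\xi)}$ in the defining integrals; it is precisely the reality of $w$ that collapses the resulting double sum to $|\widetilde w|^2$ and keeps the integrands real and nonnegative. For the split operator one also uses that integrating $|\widetilde w(\eta,\xi)|^2$ over $\xi\in(-\pi,\pi)$ (respectively over $\eta$) reduces it to a one-directional sum, so that $\delta_x^\alpha$ and $\delta_y^\alpha$ contribute exactly the terms $\rho_x^\alpha$ and $\rho_y^\alpha$. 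None of this is deep; the substantive content of the lemma is the subadditivity (concavity) of $r\mapsto r^{\alpha/2}$ for $\alpha\in(1,2)$, which is what makes the discrete fractional-Laplacian norm no larger than the split Riesz norm.
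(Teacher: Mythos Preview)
Your argument is correct. The paper does not supply its own proof of this lemma but merely cites it from \cite{Li_Sun}; the Fourier-multiplier approach you take---reducing the operator inequality to the pointwise subadditivity $(s+t)^{\alpha/2}\le s^{\alpha/2}+t^{\alpha/2}$ for $\alpha/2\in(0,1)$---is the natural one and almost certainly coincides with what appears in the cited reference.
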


The following lemma offers a manner to convert the estimation in $\|\cdot\|_A$ to the one in $\|\cdot\|$.
\begin{lemma}[\cite{Wang_Hao_2022}]\label{lemma:norm_relationship}
For any grid function $w\in W_h$ and $\alpha\in(1,2)$,
\[ \|w\|^2\le c_\alpha\|w\|_A^2 ,\]
 where $c_\alpha:=(\frac{\pi}{2})^\alpha\int_{\IR^2}\frac{1}{1+(\eta^2+\xi^2)^{\frac{\alpha}{2}}}\diff \eta \diff \xi $.
\end{lemma}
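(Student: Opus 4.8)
The plan is to pass to the Fourier (or rather, discrete symbol) side, where both $\|w\|^2$ and $\|w\|_A^2=\langle L_h^\alpha w,w\rangle$ become integrals against the symbol of the respective operators, and then bound the ratio of the two symbols. Concretely, I would first recall from \cite{Hao_Zhang_2021} that the grid operator $L_h^\alpha$ has the Fourier symbol
\[
\widehat{L_h^\alpha}(\eta,\xi)=\frac{1}{h^\alpha}\Bigl(4\sin^2(\tfrac{\eta h}{2})+4\sin^2(\tfrac{\xi h}{2})\Bigr)^{\frac{\alpha}{2}},
\]
since the coefficients $a_{ij}^{(\alpha)}$ are precisely the Fourier coefficients of $\bigl(4\sin^2(\tfrac{\eta}{2})+4\sin^2(\tfrac{\xi}{2})\bigr)^{\alpha/2}$. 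By Parseval's identity for the grid inner product, for $w\in W_h$ extended by zero,
\[
\|w\|^2=\frac{h^2}{4\pi^2}\int_{[-\pi/h,\pi/h]^2}|\hat w(\eta,\xi)|^2\,\diff\eta\,\diff\xi,\qquad
\|w\|_A^2=\frac{h^2}{4\pi^2}\int_{[-\pi/h,\pi/h]^2}\widehat{L_h^\alpha}(\eta,\xi)\,|\hat w(\eta,\xi)|^2\,\diff\eta\,\diff\xi,
\]
where $\hat w$ is the semi-discrete Fourier transform of the grid function. Hence it suffices to show the pointwise bound $1\le c_\alpha\,\widehat{L_h^\alpha}(\eta,\xi)$ on the cube $[-\pi/h,\pi/h]^2$, i.e.
\[
\frac{1}{h^\alpha}\Bigl(4\sin^2(\tfrac{\eta h}{2})+4\sin^2(\tfrac{\xi h}{2})\Bigr)^{\frac{\alpha}{2}}\ \ge\ \frac{1}{c_\alpha}\qquad\text{for }(\eta,\xi)\in[-\pi/h,\pi/h]^2,
\]
but this is clearly false near the origin, so the correct route is instead to bound the ratio inside the integral: I want $|\hat w|^2\le c_\alpha\,\widehat{L_h^\alpha}|\hat w|^2$ only after using a Poincaré-type gain coming from the zero boundary condition, \emph{or} — more likely what the authors intend — one splits $|\hat w|^2 = \frac{\widehat{L_h^\alpha}+1}{\widehat{L_h^\alpha}+1}|\hat w|^2$ and controls the multiplier $\frac{1}{\widehat{L_h^\alpha}+1}$ against a constant times $\widehat{L_h^\alpha}$. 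Let me restate the target cleanly.

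The real claim to establish pointwise is
\[
1\ \le\ c_\alpha\,\widehat{L_h^\alpha}(\eta,\xi)\cdot\frac{1}{1+?}
\]
— which does not look right either, so the honest plan is: use the elementary inequality $\frac{1}{1+s^{\alpha/2}}\ge \frac{?}{s^{\alpha/2}}$... Let me instead follow the structure dictated by the stated constant $c_\alpha=(\tfrac{\pi}{2})^\alpha\int_{\IR^2}\frac{\diff\eta\,\diff\xi}{1+(\eta^2+\xi^2)^{\alpha/2}}$. Rescaling $\eta\mapsto \eta h,\ \xi\mapsto \xi h$ and using $4\sin^2(t/2)\ge (2t/\pi)^2$ for $|t|\le\pi$ (the Jordan inequality), one gets on the Brillouin cube
\[
\widehat{L_h^\alpha}(\eta,\xi)\ \ge\ \frac{1}{h^\alpha}\Bigl(\tfrac{4}{\pi^2}\bigl((\eta h)^2+(\xi h)^2\bigr)\Bigr)^{\frac{\alpha}{2}}
=\Bigl(\tfrac{2}{\pi}\Bigr)^{\alpha}(\eta^2+\xi^2)^{\frac{\alpha}{2}}.
\]
Therefore $1\le \bigl(\tfrac{\pi}{2}\bigr)^{\alpha}\widehat{L_h^\alpha}(\eta,\xi)\,(\eta^2+\xi^2)^{-\alpha/2}$ pointwise, and then I would multiply by $|\hat w|^2$ and integrate; the factor $(\eta^2+\xi^2)^{-\alpha/2}$ is not bounded near the origin, so to make the integral converge I would use $\frac{1}{(\eta^2+\xi^2)^{\alpha/2}}\le \frac{1+(\eta^2+\xi^2)^{\alpha/2}}{(\eta^2+\xi^2)^{\alpha/2}}$, no — rather I split the integrand using the bound $\widehat{L_h^\alpha}\ge (\tfrac2\pi)^\alpha (\eta^2+\xi^2)^{\alpha/2}$ together with the trivial bound $\widehat{L_h^\alpha}\ge 0$ and interpolate so that $1\le c_\alpha \widehat{L_h^\alpha}\cdot\frac{1}{1+(\eta^2+\xi^2)^{\alpha/2}}$ fails pointwise but holds in the integrated sense after comparing $\int |\hat w|^2$ with $\int \widehat{L_h^\alpha}|\hat w|^2$ via the normalized measure $\frac{\diff\eta\diff\xi}{1+(\eta^2+\xi^2)^{\alpha/2}}$ of total mass $\le c_\alpha/(\tfrac\pi2)^\alpha<\infty$.

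The cleanest version of the argument, and the one I would actually write: on the Brillouin cube set $m(\eta,\xi):=\widehat{L_h^\alpha}(\eta,\xi)\ge 0$; from the Jordan inequality $m(\eta,\xi)\ge (\tfrac2\pi)^\alpha(\eta^2+\xi^2)^{\alpha/2}$, hence
\[
1\ \le\ \frac{1+(\tfrac\pi2)^\alpha m(\eta,\xi)}{1+(\eta^2+\xi^2)^{\alpha/2}}\cdot\frac{1+(\eta^2+\xi^2)^{\alpha/2}}{1}\ \le\ \Bigl(1+(\tfrac{\pi}{2})^\alpha m(\eta,\xi)\Bigr)\cdot\frac{1}{1+(\eta^2+\xi^2)^{\alpha/2}}\cdot\bigl(1+(\eta^2+\xi^2)^{\alpha/2}\bigr),
\]
which collapses to a triviality, so the genuine mechanism must be a Poincaré inequality: because $w$ vanishes outside $\mI_h$, its Fourier transform cannot concentrate at $\eta=\xi=0$, and one gets $\int|\hat w|^2\le C\int (\eta^2+\xi^2)^{\alpha/2}|\hat w|^2$ with $C$ controlled by the domain size — but the stated $c_\alpha$ has no $b-a$ dependence, which rules that out. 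I therefore expect the intended proof is simply the symbol comparison $1\le c_\alpha\,\widehat{L_h^\alpha}$ \emph{restricted to the frequencies actually realized by grid functions in $W_h$}, using the substitution $t=(\eta^2+\xi^2)^{\alpha/2}$ and the identity $\int_{\IR^2}\frac{\diff\eta\diff\xi}{1+(\eta^2+\xi^2)^{\alpha/2}}$ to absorb the low-frequency part; concretely one writes $\|w\|^2=\int\frac{1+(\eta^2+\xi^2)^{\alpha/2}}{1+(\eta^2+\xi^2)^{\alpha/2}}|\hat w|^2\le \sup\frac{1+(\eta^2+\xi^2)^{\alpha/2}}{1+(\tfrac2\pi)^\alpha\cdots}$...

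The main obstacle, and the step I would spend the most care on, is exactly pinning down \emph{which} pointwise inequality between the discrete symbol $\widehat{L_h^\alpha}$, the constant $1$, and the weight $1+(\eta^2+\xi^2)^{\alpha/2}$ is both true and strong enough to yield the stated constant $c_\alpha$ after integration; I expect it is
\[
1\ \le\ c_\alpha\,\widehat{L_h^\alpha}(\eta,\xi)\qquad\text{on the support of }\hat w,
\]
proved by combining the Jordan inequality bound $\widehat{L_h^\alpha}\ge(\tfrac2\pi)^\alpha(\eta^2+\xi^2)^{\alpha/2}$ with a lower bound on $(\eta^2+\xi^2)^{\alpha/2}$ coming from the fact that the minimal nonzero discrete frequency is $\sim 1/(b-a)$ — and then the $b-a$ dependence must secretly sit inside $c_\alpha$ through the integral, which it does once one rescales. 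Once the correct pointwise statement is isolated, the rest is: (i) write both norms via the (semi-)discrete Parseval identity, (ii) apply the pointwise bound under the integral sign, (iii) conclude $\|w\|^2\le c_\alpha\|w\|_A^2$. I would present steps (i)–(iii) in that order, devoting a short lemma to the Jordan-inequality symbol bound and a one-line change of variables to recognize $c_\alpha$.
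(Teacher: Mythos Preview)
The paper gives no proof; it simply cites \cite{Wang_Hao_2022}. Your Parseval setup and the Jordan-inequality symbol bound $\widehat{L_h^\alpha}(\eta,\xi)\ge(\tfrac{2}{\pi})^\alpha(\eta^2+\xi^2)^{\alpha/2}$ on the Brillouin cube are both correct ingredients, but none of the ways you try to assemble them closes, and you yourself correctly diagnose why each attempt collapses: a pointwise bound $1\le c_\alpha\widehat{L_h^\alpha}$ fails at the origin, the splitting against $1+(\eta^2+\xi^2)^{\alpha/2}$ reduces to a tautology, and a Poincar\'e argument using the Dirichlet support would introduce a factor depending on $b-a$, which the stated $c_\alpha$ does not contain.

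The deeper reason the argument refuses to close is that the constant, as printed, is ill-posed: for $\alpha\in(1,2)$ the integral $\int_{\IR^2}\bigl(1+(\eta^2+\xi^2)^{\alpha/2}\bigr)^{-1}\diff\eta\,\diff\xi$ \emph{diverges} (the radial tail is $\int^\infty r^{1-\alpha}\,\diff r=\infty$ since $1-\alpha>-1$), so $c_\alpha=+\infty$ and the inequality is vacuous. Equivalently, dilating grid and domain by $\lambda$ multiplies $\|w\|^2/\|w\|_A^2$ by $\lambda^\alpha$, so no finite domain-independent constant can exist. The weight $1/(1+|\xi|^{\alpha})$ is exactly what arises when one proves the discrete embedding $\|w\|_\infty^2\le C(\|w\|^2+\|w\|_A^2)$ via Cauchy--Schwarz on the Fourier side, and that embedding needs $\alpha>2$ to be nontrivial; this strongly suggests the statement has been mistranscribed from \cite{Wang_Hao_2022}. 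Rather than reverse-engineering a proof to match a doubtful constant, consult the source for the precise inequality, and then check that the downstream uses in \eqref{eq:stability proof 6} and \eqref{eq:convergence proof4} --- which only require \emph{some} finite $h$-independent bound $\|w\|\le C\|w\|_A$ --- still go through; a standard discrete fractional Poincar\'e inequality with $C$ depending on $b-a$ would suffice there.
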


Let the sequence of grid functions $\{\check u^n:=\{\check u^n_{ij}|~(i,j)\in\IZ^2\}\in W_h\}_{1\le n\le M}$ be the result computed by S-ADI scheme \eqref{eq:ADI scheme} and \eqref{eq:ADI scheme initial} with the initial values $\check u^0:=\{\check u^0_{ij}|~(i,j)\in\IZ^2\},\partial_t \check u^0:=\{ \partial_t \check u^0_{ij}|~(i,j)\in\IZ^2\}\in W_h$. With above notations and lemmas, now we present the stability theorem of the S-ADI scheme.

\begin{theorem}\label{theorem:stability}
Suppose the function $g$ satisfies Lipschitz condition, i.e., there is a positive constant $L$ such that
\[ |g(x) - g(y)|\le L|x-y|, \quad \forall x,y\in\IR.  \]
Then for $1\le n\le M$,
\[\|u^n-\check u^n\|\le\sqrt{\frac{2 c_{\alpha}}{\kappa}}\exp\left(TL\sqrt{\frac{2 c_{\alpha}}{\kappa}}\right)
(2\|\partial_t u^0 -\partial_t \check u^0\| +\tau L\|u^0-\check u^0\| +\sqrt{2\kappa}\|u^0-\check u^0\|_A).\]
\end{theorem}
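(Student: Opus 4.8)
The plan is to derive an energy estimate for the error grid function $e^n := u^n - \check u^n$, which satisfies the same S-ADI recursions \eqref{eq:ADI scheme} and \eqref{eq:ADI scheme initial} but with forcing term $g(u^n_{ij}) - g(\check u^n_{ij})$ and with initial data $e^0, \partial_t e^0$. First I would introduce a discrete energy functional at level $n$ of the form
\[
E^n := \left\| \frac{e^{n+1} - e^n}{\tau} \right\|^2 + \frac{\kappa}{2}\left( \|e^{n+1}\|_{\tilde A}^2 + \|e^n\|_{\tilde A}^2 \right) + \frac{\kappa^2 \tau^2}{4}\left\| \frac{e^{n+1} - e^n}{\tau} \right\|_B^2 \cdot \tau^2 \text{-type terms},
\]
adjusting the exact combination so that the ADI remainder $\frac{\kappa^2\tau^2}{4}\delta_x^\alpha\delta_y^\alpha(e^{n+1}-2e^n+e^{n-1})$ contributes a nonnegative telescoping quantity. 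The key algebraic step is to test \eqref{eq:ADI scheme} (written for $e$) against $e^{n+1} - e^{n-1}$ with the inner product $\langle\cdot,\cdot\rangle$; the central difference term telescopes into the difference of $\|(e^{k+1}-e^k)/\tau\|^2$, the Crank--Nicolson--type term $-\kappa(\delta_x^\alpha+\delta_y^\alpha)\frac{e^{n+1}+e^{n-1}}{2}$ telescopes into $\frac{\kappa}{2}(\|e^{n+1}\|_{\tilde A}^2 - \|e^{n-1}\|_{\tilde A}^2)$ using self-adjointness of $\delta_x^\alpha+\delta_y^\alpha$, and the perturbation term telescopes similarly in $\|\cdot\|_B$.

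Next I would bound the two ``bad'' terms on the right-hand side. The term $-\kappa\langle (L_h^\alpha - \delta_x^\alpha-\delta_y^\alpha)e^n, e^{n+1}-e^{n-1}\rangle$ is the crucial one: here Lemma \ref{lemma:norm_relationship_1} gives $\|e^n\|_A^2 \le \|e^n\|_{\tilde A}^2$, i.e. $\langle (\tilde A - A)e^n, e^n\rangle \ge 0$ in the notation $A = L_h^\alpha$, $\tilde A = \delta_x^\alpha+\delta_y^\alpha$ — so the operator $\delta_x^\alpha+\delta_y^\alpha - L_h^\alpha$ is positive semidefinite and self-adjoint, hence Cauchy--Schwarz in its induced seminorm followed by Young's inequality lets me absorb the resulting $\|e^n\|_{\tilde A}^2$-type contribution into the energy (this is exactly why the splitter must be ``at least as stiff''). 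The Lipschitz term $\tau^2\langle g(u^n)-g(\check u^n), (e^{n+1}-e^{n-1})/\tau^2\rangle$ is controlled by $L\|e^n\|\,\|e^{n+1}-e^{n-1}\|$, and then Lemma \ref{lemma:norm_relationship} converts $\|e^n\|$ to $\sqrt{c_\alpha}\|e^n\|_A \le \sqrt{c_\alpha}\|e^n\|_{\tilde A}$, keeping everything in the energy norm. Summing the telescoped inequality from $k=1$ to $n-1$ yields $E^{n-1} \le E^0 + C\tau\sum_{k} (\text{energy-type terms})$, after which discrete Grönwall's inequality gives $E^{n-1} \le E^0 \exp(C T)$ with $C$ proportional to $L\sqrt{2c_\alpha/\kappa}$.

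Separately I would handle the startup step: applying the same testing idea to \eqref{eq:ADI scheme initial} (for $e$), pairing with $e^1 - e^0$, gives a bound on $E^0$ in terms of $\|\partial_t e^0\|$, $\|e^0\|$ and $\|e^0\|_A$ — this is where the factors $2\|\partial_t u^0-\partial_t\check u^0\|$, $\tau L\|u^0-\check u^0\|$ and $\sqrt{2\kappa}\|u^0-\check u^0\|_A$ in the statement originate. Finally, since $E^n \ge \frac{\kappa}{2}\|e^{n+1}\|_{\tilde A}^2 \ge \frac{\kappa}{2}\|e^{n+1}\|_A^2 \ge \frac{\kappa}{2c_\alpha}\|e^{n+1}\|^2$ by Lemmas \ref{lemma:norm_relationship_1} and \ref{lemma:norm_relationship}, I extract $\|e^n\|$ and collect constants to reach the stated bound. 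The main obstacle I anticipate is the bookkeeping in the first step: getting the ADI perturbation term and the Crank--Nicolson averaging to telescope cleanly requires the right choice of test function and the right grouping of $\|\cdot\|_{\tilde A}$ and $\|\cdot\|_B$ contributions into $E^n$, and one must be careful that the perturbation term's sign works in favor of (not against) the energy — checking that $\langle \delta_x^\alpha\delta_y^\alpha(e^{n+1}-2e^n+e^{n-1}), e^{n+1}-e^{n-1}\rangle$ telescopes with a good sign is the delicate point, as is verifying that no leftover $\tau^2$-order term spoils unconditionality.
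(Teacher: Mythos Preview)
Your overall architecture (energy method, test the error equation against $(e^{n+1}-e^{n-1})/\tau$, discrete Gr\"onwall, separate startup estimate) matches the paper, but the plan for the explicit term $-\kappa(L_h^\alpha-\delta_x^\alpha-\delta_y^\alpha)e^n$ has a real gap. You propose to bound $\kappa\langle(\tilde A-A)e^n,(e^{n+1}-e^{n-1})/\tau\rangle$ by Cauchy--Schwarz in the $(\tilde A-A)$-seminorm plus Young, absorbing the $\|e^n\|_{\tilde A}^2$ piece into your $E^n$. The trouble is the \emph{other} Young factor, $\|\delta_t e^{n\pm\frac12}\|_{\tilde A-A}^2$: your energy contains only $\|\delta_t e^{n+\frac12}\|^2$ (plain norm) and the $\|\cdot\|_B$ term, not any $\|\delta_t e\|_{\tilde A}$- or $\|\delta_t e\|_{\tilde A-A}$-type quantity. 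Since the symbol of $\tilde A-A$ is a fixed positive fraction of the symbol of $A$ (both are $O(h^{-\alpha})$ at high frequencies), this leftover cannot be controlled by $\|\delta_t e^{n+\frac12}\|^2$ with an $h$-independent constant. The Gr\"onwall step would then give $E^n\le(1+C)E^{n-1}$ with $C$ not containing a factor $\tau$, hence growth like $(1+C)^{T/\tau}$ --- unconditional stability is lost.

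The paper sidesteps any estimate here. It first rewrites
\[
-\kappa(\delta_x^\alpha+\delta_y^\alpha)\tfrac{e^{n+1}+e^{n-1}}{2}
=-\kappa(\delta_x^\alpha+\delta_y^\alpha)\tfrac{e^{n+1}-2e^n+e^{n-1}}{2}-\kappa(\delta_x^\alpha+\delta_y^\alpha)e^n,
\]
and merges the last piece with $-\kappa(L_h^\alpha-\delta_x^\alpha-\delta_y^\alpha)e^n$ to obtain the full $-\kappa L_h^\alpha e^n$ on the right. Testing the second-difference part against $(e^{n+1}-e^{n-1})/\tau$ telescopes into $\tfrac{\kappa\tau}{2}(\|\delta_t e^{n+\frac12}\|_{\tilde A}^2-\|\delta_t e^{n-\frac12}\|_{\tilde A}^2)$, while $-\kappa\langle e^n,(e^{n+1}-e^{n-1})/\tau\rangle_A$ is expanded \emph{exactly} by polarization into $\tfrac{\kappa\tau}{2}(\|\delta_t e^{n+\frac12}\|_A^2-\|\delta_t e^{n-\frac12}\|_A^2)-\tfrac{\kappa}{2\tau}(\|e^{n+1}\|_A^2-\|e^{n-1}\|_A^2)$. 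Everything telescopes into
\[
H_n^2=\|\delta_t e^{n+\frac12}\|^2+\tfrac{\kappa\tau^2}{2}\bigl(\|\delta_t e^{n+\frac12}\|_{\tilde A}^2-\|\delta_t e^{n+\frac12}\|_A^2\bigr)+\tfrac{\kappa}{2}\bigl(\|e^{n+1}\|_A^2+\|e^n\|_A^2\bigr)+\tfrac{\kappa^2\tau^4}{4}\|\delta_t e^{n+\frac12}\|_B^2,
\]
with $\tfrac{H_n^2-H_{n-1}^2}{\tau}=I_2$ an \emph{identity}. Lemma~\ref{lemma:norm_relationship_1} is invoked only to certify $H_n^2\ge0$, not to bound a term. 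That regroup-and-polarize step is the missing idea; once you have it, the Lipschitz bound on $I_2$, the inequalities $\|e^n\|\le\sqrt{2c_\alpha/\kappa}\,H_{n-1}$ and $\|\delta_t e^{n\pm\frac12}\|\le H_{n\text{ or }n-1}$, and your startup analysis all go through exactly as you outline.
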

\begin{proof}
Denote $w^n:=\{w^n_{ij}:=u^n_{ij}-\check u^n_{ij}|~(i,j)\in\IZ^2\}$ for $0\le n \le M$, and $\partial_t w^0:=\{\partial_t w^0_{ij}:=\partial_t u^0_{ij}-\partial_t\check u^0_{ij}|~(i,j)\in\IZ^2\}$.
It follows from \eqref{eq:ADI scheme} for $n\ge1$ that
\begin{equation}\label{eq:stability proof 1}
  \begin{split}
&\frac{w^{n+1}_{ij}-2w^{n}_{ij}+w^{n-1}_{ij}}{\tau^2} +\kappa(\delta_x^\alpha+\delta_y^{\alpha})\frac{w^{n+1}_{ij}-2w^n_{ij}+w^{n-1}_{ij}}{2} \\
 &+\frac{\kappa^2\tau^2}{4}\delta_x^\alpha \delta_y^\alpha(w^{n+1}_{ij}-2w^{n}_{ij}+w^{n-1}_{ij})=-\kappa L_h^\alpha w^n_{ij} + g(u^n_{ij}) - g(\check u^n_{ij}).
\end{split}
\end{equation}
Multiply $h^2\frac{w^{n+1}_{ij} - w^{n-1}_{ij} }{\tau}$ on both sides of \eqref{eq:stability proof 1} and sum them for $(i,j)\in\mI_h$. Then we have
\begin{equation}\label{eq:stability proof 2}
\begin{split}
&\frac{\|\delta_t w^{n+\frac{1}{2}}\|^2 - \|\delta_t w^{n-\frac{1}{2}}\|^2}{\tau}\\
& + \frac{\tau\kappa}{2}(\|\delta_t w^{n+\frac{1}{2}}\|_{\tilde A}^2-\|\delta_t w^{n-\frac{1}{2}}\|_{\tilde A}^2) +\frac{\kappa^2\tau^3}{4}(\|\delta_t w^{n+\frac{1}{2}}\|_B^2-\|\delta_t w^{n-\frac{1}{2}}\|_B^2)\\
=&\underbrace{-\kappa\langle w^n,\frac{w^{n+1}-w^{n-1}}{\tau}\rangle_A}+\underbrace{h^2\sum_{i=1}^{N}\sum_{j=1}^{N}(g(u^n_{ij})-g(\check u^n_{ij}))(\delta_t w^{n+\frac{1}{2}}_{ij}+\delta_t w^{n-\frac{1}{2}}_{ij})},\\
& \hspace{2cm}=:I_1\hspace{4cm}=:I_2
\end{split}
\end{equation}
where notation
 $\delta_t w^{n\pm \frac{1}{2}}:=\frac{\pm w^{n\pm1}\mp w^{n} }{\tau}$
     has been used.
By simple calculation,
\begin{equation}\label{eq:stability proof 3}
\begin{split}
  I_1=&\frac{\kappa}{\tau}\langle w^n,w^{n+1}-w^{n-1}\rangle_A=-\frac{\kappa}{\tau}\langle w^n,w^{n+1}\rangle_A + \frac{\kappa}{\tau}\langle w^n,w^{n-1}\rangle_A\\
  =&\frac{\kappa}{2\tau}(\|w^{n+1}-w^n\|_A^2 - \|w^{n+1}\|_A^2 - \|w^n\|_A^2) \\
  &-\frac{\kappa}{2\tau}(\|w^n-w^{n-1}\|_A^2 - \|w^n\|_A^2 -\|w^{n-1}\|_A^2)\\
  =&\frac{\kappa\tau}{2}(\|\delta_t w^{n+\frac{1}{2}}\|_A^2-\|\delta_t w^{n-\frac{1}{2}}\|_A^2)-\kappa\frac{\|w^{n+1}\|_A^2 -\|w^{n-1}\|_A^2}{2\tau}.
\end{split}
\end{equation}
Denote
\[
\begin{split}
H_n:=&\left[\|\delta_t w^{n+\frac{1}{2}}\|^2 +\frac{\tau^2\kappa}{2}(\|\delta_t w^{n+\frac{1}{2}}\|_{\tilde A}^2 -\|\delta_t w^{n+\frac{1}{2}}\|_A^2) \right.\\
&\left.+\frac{\kappa}{2}(\|w^{n+1}\|_A^2 + \| w^n \|_A^2 ) +\frac{\kappa^2\tau^4}{4}\|\delta_t w^{n+\frac{1}{2}}\|_B^2 \right]^{\frac{1}{2}}.
\end{split}
\]
Lemma \ref{lemma:norm_relationship_1} indicates that $H_n$ is well-defined and real positive. With the notion of $H_n$ and by using equation \eqref{eq:stability proof 3}, \eqref{eq:stability proof 2} is simplified to
\begin{equation}\label{eq:stability proof 31}
  \frac{H_n^2-H_{n-1}^2}{\tau}=I_2,\quad 1\le n\le M-1.
\end{equation}
By Cauchy-Schwarz inequality,
\begin{equation}\label{eq:stability proof 4}
\begin{split}
I_2\le& h^2\sum_{i=1}^{N}\sum_{j=1}^{N} |g(u^n_ij) - g(\tilde u^n_{ij})|(|\delta w^{n+\frac{1}{2}}_{ij}|+|\delta w^{n-\frac{1}{2}}_{ij}|)\\
\le&Lh^2\sum_{i=1}^{N}\sum_{j=1}^{N} |w^n_{ij}|(|\delta w^{n+\frac{1}{2}}_{ij}|+|\delta w^{n-\frac{1}{2}}_{ij}|)
\le L\|w^n\|(\|\delta_t w^{n+\frac{1}{2}}\|+\|\delta_t w^{n-\frac{1}{2}}\|).
\end{split}
\end{equation}
Substituting  \eqref{eq:stability proof 4} into \eqref{eq:stability proof 31} gives that
\begin{equation}\label{eq:stability proof 5}
\frac{H_n^2-H_{n-1}^2}{\tau}\le L\|w^n\|(\|\delta_t w^{n+\frac{1}{2}}\|+\|\delta_t w^{n-\frac{1}{2}}\|),\quad 1\le n\le M-1.
\end{equation}
According to the definition of $H_n$ and Lemma \ref{lemma:norm_relationship}, we get
\begin{equation}\label{eq:stability proof 6}
\|w^n\|\le\sqrt{c_\alpha}\|w^n\|_A=\sqrt{\frac{2c_\alpha}{\kappa}}\sqrt{\frac{\kappa}{2}\|w^n\|_A^2} \le \sqrt{\frac{2c_\alpha}{\kappa}}\min\{H_{n-1},H_{n}\},
\end{equation}
and
\begin{equation}\label{eq:stability proof 7}
\|\delta_t w^{n+\frac{1}{2}}\| \le H_n,\quad \|\delta_t w^{n-\frac{1}{2}}\|\le H_{n-1}.
\end{equation}
Inserting \eqref{eq:stability proof 6} and \eqref{eq:stability proof 7} into \eqref{eq:stability proof 5}, we have
\[\frac{H_n^2-H_{n-1}^2}{\tau}\le L\sqrt{\frac{2c_\alpha}{\kappa}}H_{n-1}(H_n+H_{n-1}),\]
which implies
\begin{equation}\label{eq:stability proof 8}
\begin{split}
H_n\le&\left(1+\tau L\sqrt{\frac{2c_\alpha}{\kappa}}\right)H_{n-1}\le\exp\left(\tau L\sqrt{\frac{2c_\alpha}{\kappa}}\right)H_{n-1}\\
\le&\exp\left(TL\sqrt{\frac{2c_\alpha}{\kappa}}\right)H_0,\quad 1\le n\le M-1.
\end{split}\end{equation}

Now, we come to estimate $H_0$. According to \eqref{eq:ADI scheme initial}, we have
\begin{equation}\label{eq:stability proof 9}
\begin{split}
\frac{w^{1}_{ij}-w^{0}_{ij}-\tau \partial_t w^0_{ij}}{\tau^2/2} =&-\kappa(\delta_x^\alpha+\delta_y^{\alpha})(w^{1}_{ij}-w^0_{ij}) -\frac{\kappa^2\tau^2}{2}\delta_x^\alpha \delta_y^\alpha(w^{1}_{ij}-w^{0}_{ij})\\
 &-\kappa L_h^\alpha w^0_{ij} + g(u^0_{ij})-g(\check u^0_{ij}).
\end{split}\end{equation}
Multiply $h^2\frac{w^{1}_{ij} - w^{0}_{ij} }{\tau}$ on both sides of \eqref{eq:stability proof 9} and sum them for $(i,j)\in \mI_h$. Then we have
\begin{equation}\label{eq:stability proof 10}
\begin{split}
&\|\delta_t w^{\frac{1}{2}}\|^2+\frac{\kappa \tau^2}{2}\|\delta_t w^{\frac{1}{2}}\|^2_{\tilde A} +\frac{\kappa^2\tau^4}{4}\|\delta_t w^{\frac{1}{2}}\|^2_B +\frac{\tau\kappa}{2}\langle w^0,\delta_t w^{\frac{1}{2}}\rangle_A \\
=& \langle \partial_t w^0,\delta_t w^{\frac{1}{2}}\rangle  + \frac{\tau}{2}h^2\sum_{i=1}^{N}\sum_{j=1}^{N} (g(u^0_{ij})-g(\check u^0_{ij})) \delta_t w^{\frac{1}{2}}_{ij}.
\end{split}
\end{equation}
Substituting the following equality:
\[
\begin{split}
\frac{\tau\kappa}{2}\langle w^0,\delta_t w^{\frac{1}{2}}\rangle_A
=&\frac{\kappa}{4} (\|w^1\|_A^2-\|w^0\|_A^2-\tau^2\|\delta_t w^{\frac{1}{2}}\|_A^2)\\
=&\frac{\kappa}{4} (\|w^1\|_A^2+\|w^0\|_A^2-\tau^2\|\delta_t w^{\frac{1}{2}}\|_A^2)-\frac{\kappa}{2}\|w^0\|_A^2,
\end{split}
\]
into \eqref{eq:stability proof 10}, we get
\begin{equation}\label{eq:stability proof 11}
\begin{split}
&\frac{1}{2}(\|\delta_t w^{\frac{1}{2}}\|^2+\frac{\kappa \tau^2}{2}\|\delta_t w^{\frac{1}{2}}\|^2_{\tilde A} +\frac{\kappa^2\tau^4}{4}\|\delta_t w^{\frac{1}{2}}\|^2_B) +\frac{1}{2}H_0^2\\
=& \langle \partial_t w^0,\delta_t w^{\frac{1}{2}}\rangle  + \frac{\tau}{2}h^2\sum_{i=1}^{N}\sum_{j=1}^{N} (g(u^0_{ij})-g(\check u^0_{ij})) \delta_t w^{\frac{1}{2}}_{ij}+\frac{\kappa}{2}\|w^0\|_A^2.
\end{split}\end{equation}
Applying Cauchy-Swachtz inequality and \eqref{eq:stability proof 6}--\eqref{eq:stability proof 7} to \eqref{eq:stability proof 11}, we have
\[\begin{split}
\frac{1}{2}H_0^2\le& \langle \partial_t w^0,\delta_t w^{\frac{1}{2}}\rangle  + \frac{\tau}{2}h^2\sum_{i=1}^{N}\sum_{j=1}^{N} (g(u^0_{ij})-g(\tilde u^0_{ij})) \delta_t w^{\frac{1}{2}}_{ij}+\frac{\kappa}{2}\|w^0\|_A^2\\
\le&\|\partial_t w^0\|\|\delta_t w^{\frac{1}{2}}\|  +\frac{\tau}{2}L\|w^0\|\|\delta_t w^{\frac{1}{2}}\|+\frac{\kappa}{2}\|w^0\|_A^2\\
\le&\|\partial_t w^0\|H_0 +\frac{\tau}{2}L\|w^0\|H_0+\sqrt{\frac{\kappa}{2}}\|w^0\|_A H_0,\\
\end{split}\]
which indicates
\begin{equation}\label{eq:stability proof 12}
H_0\le 2\|\partial_t w^0\| +\tau L\|w^0\| +\sqrt{2\kappa}\|w^0\|_A.
\end{equation}
Inserting \eqref{eq:stability proof 12} into \eqref{eq:stability proof 8} and using \eqref{eq:stability proof 6}, we obtain
\[\|w^n\|\le\sqrt{\frac{2 c_{\alpha}}{\kappa}}\exp\left(TL\sqrt{\frac{2 c_{\alpha}}{\kappa}}\right)(2\|\partial_t w^0\| +\tau L\|w^0\| +\sqrt{2\kappa}\|w^0\|_A),\quad 1\le n\le M.\]
\end{proof}

The result of Theorem \ref{theorem:stability} means that, without restriction on the step size, the perturbation of the numerical solution of the S-ADI scheme is bounded by the perturbation of the initial condition.

\section{Error estimation}\label{section:error}
In this section, we  analyze the error of the S-ADI scheme. First, we present an estimation of the truncation error of the S-ADI scheme, which is the fundament for the global error analysis. To do this, we introduce some function spaces, and present some properties of the functions belonging to those spaces.

For $v\in L^1(\IR^d)$, we introduce its Fourier transform and inverse Fourier transform:
\[\mF(v)(\by):=\int_{\IR^d} v(\bx)\exp(-\bi \by\cdot\bx)\diff \bx,\quad \mF^{-1}(v)(\bx):=\frac{1}{(2\pi)^d}\int_{\IR^d} v(\by)\exp(\bi \bx\cdot\by)\diff \by,\]
where $\bx,\by\in\IR^d$, and $\bx\cdot\by$ denotes the Euclidean inner product in $\IR^d$. With the concept of Fourier transform, we introduce two function spaces:
\[\mL^{\gamma}(\IR^2)\!:=\!\left\{ v\in L^1(\IR^2)~\bigg| ~ \|v\|_{\mL^{\gamma}(\IR^2) } \!:=\!\int_{\IR^2} \left(1+\sqrt{\eta^2 +\xi^2}\right)^\gamma |\mF(v)(\eta,\xi)| \diff \eta \diff \xi <+\infty  \right\},\]
and
\[\mH^{\gamma}(\IR^2):=\left\{ v\in L^1(\IR^2)~\bigg| ~ \|v\|_{\mH^{\gamma}(\IR^2) } :=\int_{\IR^2} |\eta|^\gamma|\xi|^\gamma |\mF(v)(\eta,\xi)|\diff \eta \diff \xi <+\infty  \right\}.\]
The following lemma indicates that $\mL^{\alpha+2}(\IR^2)$ is a subspace of $\mL^{\alpha}(\IR^2)$ and $\mH^\alpha(\IR^2)$.
\begin{lemma}\label{lemma:localerr1}
Suppose $\alpha\in(1,2)$ and $v\in \mL^{\alpha+2}(\IR^2)$. Then $v\in\mL^{\alpha}(\IR^2)\cap\mH^\alpha(\IR^2)$, and
\[\|v\|_{\mL^\alpha(\IR^2)} \le \|v\|_{\mL^{2+\alpha}(\IR^2)},\quad \|v\|_{\mH^\alpha(\IR^2)}\le\frac{1}{2}\|v\|_{\mL^{2+\alpha}(\IR^2)}.\]
\end{lemma}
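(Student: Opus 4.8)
The plan is to prove both inequalities by direct estimation of the defining integrals, exploiting the single elementary pointwise bound that for all $(\eta,\xi)\in\IR^2$,
\[
1 \le 1 + (\eta^2+\xi^2)^{\frac{\alpha}{2}} = 1 + \left(\sqrt{\eta^2+\xi^2}\right)^\alpha \le \left(1+\sqrt{\eta^2+\xi^2}\right)^\alpha .
\]
The right-hand inequality here is the key algebraic fact: for $a,b\ge 0$ and $\alpha\ge 1$ one has $a^\alpha + b^\alpha \le (a+b)^\alpha$ (superadditivity of $t\mapsto t^\alpha$ on $[0,\infty)$ when $\alpha\ge1$), applied with $a=1$, $b=\sqrt{\eta^2+\xi^2}$. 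Since $\mF(v)$ is finite a.e.\ for $v\in L^1(\IR^2)$ and the weight $\left(1+\sqrt{\eta^2+\xi^2}\right)^{\alpha+2}$ dominates $\left(1+\sqrt{\eta^2+\xi^2}\right)^{\alpha}$ pointwise, the first claim $\|v\|_{\mL^\alpha(\IR^2)}\le\|v\|_{\mL^{2+\alpha}(\IR^2)}$ is immediate by monotonicity of the integral, giving in particular $v\in\mL^\alpha(\IR^2)$.

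For the $\mH^\alpha$ bound I would first establish the pointwise weight comparison
\[
|\eta|^\alpha|\xi|^\alpha \le \tfrac{1}{4}\left(1+\sqrt{\eta^2+\xi^2}\right)^{\alpha+2}
\]
for all $(\eta,\xi)\in\IR^2$. To see this, note $|\eta|,|\xi|\le\sqrt{\eta^2+\xi^2}=:r$, so $|\eta|^\alpha|\xi|^\alpha\le r^{2\alpha}$; it then suffices to show $r^{2\alpha}\le\tfrac14(1+r)^{\alpha+2}$ for $r\ge0$. Since $\alpha\in(1,2)$ we have $2\alpha<\alpha+2$, hence $r^{2\alpha}\le r^{\alpha+2}$ when $r\ge1$ and $r^{2\alpha}\le 1$ when $r\le1$; in either case $r^{2\alpha}\le\max\{1,r^{\alpha+2}\}\le(1+r)^{\alpha+2}$, and one checks the constant $\tfrac14$ works — indeed more carefully, writing $|\eta|^\alpha|\xi|^\alpha\le\bigl(\tfrac{\eta^2+\xi^2}{2}\bigr)^\alpha=2^{-\alpha}r^{2\alpha}$ by AM–GM on $\eta^2,\xi^2$, and then $2^{-\alpha}r^{2\alpha}\le 2^{-\alpha}(1+r)^{\alpha+2}\le\tfrac14(1+r)^{\alpha+2}$ using $2^{-\alpha}\le 2^{-1}\le\tfrac14\cdot 2=\tfrac12$; a cleaner route is $2^{-\alpha}\le 2^{-1}$, and a separate elementary check that $r^{2\alpha}\le\tfrac12(1+r)^{\alpha+2}$, so the product is $\le\tfrac14(1+r)^{\alpha+2}$. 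Integrating this pointwise bound against $|\mF(v)(\eta,\xi)|\,\diff\eta\,\diff\xi$ yields $\|v\|_{\mH^\alpha(\IR^2)}\le\tfrac12\|v\|_{\mL^{2+\alpha}(\IR^2)}<+\infty$, so $v\in\mH^\alpha(\IR^2)$ as well.

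The only genuine obstacle is pinning down the constant $\tfrac12$ in the pointwise inequality $|\eta|^\alpha|\xi|^\alpha\le\tfrac14\left(1+\sqrt{\eta^2+\xi^2}\right)^{\alpha+2}$ with the correct factor; everything else is monotonicity of integrals. I expect to handle this by the AM–GM step $|\eta\xi|\le\tfrac12(\eta^2+\xi^2)$ followed by splitting into the cases $r=\sqrt{\eta^2+\xi^2}\le 1$ and $r\ge1$ and using $2\alpha\le\alpha+2$; in the worst case one optimizes the single–variable function $r\mapsto r^{2\alpha}/(1+r)^{\alpha+2}$ over $r\ge0$ to confirm its maximum does not exceed $\tfrac12\cdot 2^{\alpha}/4\cdot\ldots$ — but a crude bound suffices, since the lemma only asserts the constant $\tfrac12$, not optimality. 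Once the two pointwise weight inequalities are in hand, the proof is two one-line integral comparisons.
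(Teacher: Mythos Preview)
Your strategy---establish pointwise weight inequalities and integrate---is exactly the paper's, and the first inequality is handled identically. For the second, however, your bookkeeping on the constant is tangled. The pointwise bound you actually need is $|\eta|^\alpha|\xi|^\alpha\le\tfrac12(1+r)^{\alpha+2}$ with $r=\sqrt{\eta^2+\xi^2}$, not $\tfrac14$: the lemma's norm constant is $\tfrac12$, and integrating a pointwise $\tfrac12$-bound gives exactly that. Your attempts to reach $\tfrac14$ contain genuine errors: the step ``$2^{-\alpha}(1+r)^{\alpha+2}\le\tfrac14(1+r)^{\alpha+2}$'' would require $2^{-\alpha}\le\tfrac14$, i.e.\ $\alpha\ge2$, which fails here; and the ``separate elementary check'' $r^{2\alpha}\le\tfrac12(1+r)^{\alpha+2}$ is also false for $\alpha$ near $2$ (the supremum of $r^{2\alpha}/(1+r)^{\alpha+2}$ over $r\ge0$ tends to $1$ as $\alpha\to2^-$). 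In particular the crude bound $|\eta|^\alpha|\xi|^\alpha\le r^{2\alpha}$ alone cannot yield any fixed constant below $1$.

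The correct chain is already buried in your AM--GM route: $|\eta|^\alpha|\xi|^\alpha\le 2^{-\alpha}r^{2\alpha}\le 2^{-\alpha}(1+r)^{2\alpha}\le 2^{-\alpha}(1+r)^{\alpha+2}\le\tfrac12(1+r)^{\alpha+2}$, using $2\alpha<\alpha+2$ and $2^{-\alpha}<\tfrac12$. This is essentially the paper's argument, which instead writes $|\eta|^\alpha|\xi|^\alpha\le\tfrac12(|\eta|^{2\alpha}+|\xi|^{2\alpha})\le\tfrac12(\eta^2+\xi^2)^\alpha=\tfrac12 r^{2\alpha}\le\tfrac12(1+r)^{2\alpha}\le\tfrac12(1+r)^{\alpha+2}$. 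Drop the pursuit of $\tfrac14$ and your proof is complete and matches the paper.
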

\begin{proof}
\[(1+\sqrt{\eta^2 +\xi^2})^\alpha \le  (1+\sqrt{\eta^2 +\xi^2})^{2+\alpha},\quad \forall (\eta,\xi)\in\IR^2,\]
indicates $v\in\mL^{\alpha}(\IR^2)$ and the first inequality holds, while
\[
\begin{split}&|\eta|^\alpha|\xi|^\alpha\le \frac{1}{2}(|\eta|^{2\alpha} + |\xi|^{2\alpha})\le\frac{1}{2}(|\eta|^{2} + |\xi|^{2})^\alpha=\frac{1}{2}(\sqrt{|\eta|^{2} + |\xi|^{2}})^{2\alpha}\\
\le&\frac{1}{2}(1+\sqrt{|\eta|^{2} + |\xi|^{2}})^{2\alpha}\le\frac{1}{2}(1+\sqrt{|\eta|^{2} + |\xi|^{2}})^{2+\alpha},\quad \forall (\eta,\xi)\in\IR^2,\end{split}\]
implies $v\in\mH^{\alpha}(\IR^2)$ and the second inequality holds.
\end{proof}

Let $\{\tilde a^{(\alpha)}_{ij}\}$ be the Fourier coefficients of function $\left(4\sin^2\left(\frac{\eta}{2}\right)\right)^{\frac{\alpha}{2}}  \!+\! \left(4\sin^2\left(\frac{\xi}{2}\right) \right)^{\frac{\alpha}{2}}$, i.e.,
\[\tilde a^{(\alpha)}_{ij} :=\frac{1}{4\pi^2}\int_{-\pi}^{\pi}\int_{-\pi}^{\pi} \left[\left(4\sin^2\left(\frac{\eta}{2}\right)\right)^{\frac{\alpha}{2}}  + \left(4\sin^2\left(\frac{\xi}{2}\right) \right)^{\frac{\alpha}{2}}\right] \exp(-\bi (i\eta+j\xi))\diff \eta \diff \xi.\]
By simple calculation, we  find that
\[\tilde a^{(\alpha)}_{ij}=\left\{
\begin{array}{ll}
2a^{(\alpha)}_{0},& i=0,~j=0,\\
a^{(\alpha)}_{j},& i=0,~j\neq0,\\
a^{(\alpha)}_{i},& i\neq0,~j=0,\\
0,& i\neq0,~j\neq0,\\
\end{array}
\right.\]
which with \eqref{eq:single_discrete_Riesz} and \eqref{eq:single coefficient discrete Riesz} indicates that
\[(\delta_x^\alpha +\delta_y^\alpha)v(x,y)=\frac{1}{h^\alpha}\sum_{i,j\in\IZ}\tilde a^{(\alpha)}_{ij} v(x+ih,y+jh).\]
With above representation of the discrete Riesz fractional operator $\delta_x^\alpha +\delta_y^\alpha$, we obtain the following lemma, which means that $(\delta_x^\alpha+\delta_y^\alpha)v$ is bounded if the function $v$ possesses a certain regularity.

\begin{lemma}\label{lemma:boundness of discrete Riesz}
Suppose $\alpha\in(1,2)$ and $v\in\mL^\alpha(\IR^2)\cap \mC(\IR^2)$. Then
\[ |(\delta_x^\alpha+\delta_y^\alpha)v(x,y)|  \le  \frac{2^{1-\frac{\alpha}{2}}}{4\pi^2}\|v\|_{\mL^{\alpha}(\IR^2)},\quad (x,y)\in\IR^2.\]
\end{lemma}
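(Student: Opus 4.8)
The plan is to read off the Fourier multiplier symbol of $\delta_x^\alpha+\delta_y^\alpha$ from the convolution representation
\[(\delta_x^\alpha+\delta_y^\alpha)v(x,y)=\frac{1}{h^\alpha}\sum_{i,j\in\IZ}\tilde a^{(\alpha)}_{ij}\,v(x+ih,y+jh)\]
established just above the lemma, and then estimate that symbol. First I would substitute the Fourier inversion formula $v(x+ih,y+jh)=\frac{1}{(2\pi)^2}\int_{\IR^2}\mF(v)(\eta,\xi)\exp(\bi(x\eta+y\xi))\exp(\bi h(i\eta+j\xi))\diff\eta\diff\xi$ into the sum and interchange the summation with the integration. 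This interchange is legitimate: $v\in\mL^\alpha(\IR^2)$ forces $\mF(v)\in L^1(\IR^2)$ (since $(1+\sqrt{\eta^2+\xi^2})^\alpha\ge1$), while $\sum_{i,j\in\IZ}|\tilde a^{(\alpha)}_{ij}|<\infty$ because $\tilde a^{(\alpha)}_{ij}$ vanishes unless $i=0$ or $j=0$ and $|a^{(\alpha)}_i|=\mO(|i|^{-1-\alpha})$ is summable for $\alpha>1$; so Fubini applies. After the interchange the inner sum $\sum_{i,j\in\IZ}\tilde a^{(\alpha)}_{ij}\exp(\bi h(i\eta+j\xi))$ equals, by Fourier inversion for this absolutely convergent Fourier series together with $2\pi$-periodicity of $4\sin^2(\cdot/2)$ in each variable, exactly $\left(4\sin^2(h\eta/2)\right)^{\frac{\alpha}{2}}+\left(4\sin^2(h\xi/2)\right)^{\frac{\alpha}{2}}$. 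This gives the multiplier representation
\[(\delta_x^\alpha+\delta_y^\alpha)v(x,y)=\frac{1}{(2\pi)^2 h^\alpha}\int_{\IR^2}\mF(v)(\eta,\xi)\exp(\bi(x\eta+y\xi))\Big[\big(4\sin^2(h\eta/2)\big)^{\frac{\alpha}{2}}+\big(4\sin^2(h\xi/2)\big)^{\frac{\alpha}{2}}\Big]\diff\eta\diff\xi.\]

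Next I would bound the symbol pointwise. Since $t\mapsto t^{\alpha/2}$ is concave on $[0,\infty)$ for $\alpha\in(1,2)$, Jensen's inequality gives $a^{\alpha/2}+b^{\alpha/2}\le 2^{1-\frac{\alpha}{2}}(a+b)^{\frac{\alpha}{2}}$ for all $a,b\ge0$. Applying this with $a=4\sin^2(h\eta/2)$ and $b=4\sin^2(h\xi/2)$, and then using $4\sin^2 t\le(2t)^2$, yields
\[\big(4\sin^2(h\eta/2)\big)^{\frac{\alpha}{2}}+\big(4\sin^2(h\xi/2)\big)^{\frac{\alpha}{2}}\le 2^{1-\frac{\alpha}{2}}\big(h^2(\eta^2+\xi^2)\big)^{\frac{\alpha}{2}}=2^{1-\frac{\alpha}{2}}h^\alpha\big(\sqrt{\eta^2+\xi^2}\big)^{\alpha}\le 2^{1-\frac{\alpha}{2}}h^\alpha\big(1+\sqrt{\eta^2+\xi^2}\big)^{\alpha}.\]

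Finally, taking absolute values inside the integral of the multiplier representation, using $|\exp(\bi(x\eta+y\xi))|=1$ and the pointwise symbol bound, and recognizing the definition of $\|v\|_{\mL^\alpha(\IR^2)}$, I obtain
\[|(\delta_x^\alpha+\delta_y^\alpha)v(x,y)|\le\frac{2^{1-\frac{\alpha}{2}}}{4\pi^2}\int_{\IR^2}\big(1+\sqrt{\eta^2+\xi^2}\big)^{\alpha}|\mF(v)(\eta,\xi)|\diff\eta\diff\xi=\frac{2^{1-\frac{\alpha}{2}}}{4\pi^2}\|v\|_{\mL^\alpha(\IR^2)}\]
uniformly in $(x,y)\in\IR^2$, which is the assertion. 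The only genuinely delicate step is the passage to the multiplier form — justifying the term-by-term integration and identifying the Fourier series with the ($2\pi$-periodic) symbol; once that is in hand, the remaining estimate is the elementary concavity inequality, which is also what produces the stated sharp constant $2^{1-\alpha/2}$.
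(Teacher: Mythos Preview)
Your proof is correct and follows essentially the same route as the paper: derive the Fourier multiplier representation of $(\delta_x^\alpha+\delta_y^\alpha)v$ with symbol $h^{-\alpha}\big[(4\sin^2(h\eta/2))^{\alpha/2}+(4\sin^2(h\xi/2))^{\alpha/2}\big]$, bound it pointwise via $a^{\alpha/2}+b^{\alpha/2}\le 2^{1-\alpha/2}(a+b)^{\alpha/2}$ and $\sin^2 t\le t^2$, and integrate against $|\mF(v)|$. The only cosmetic difference is that the paper reaches the multiplier formula by computing $\mF((\delta_x^\alpha+\delta_y^\alpha)v)$ and then applying $\mF^{-1}$, whereas you insert the inversion formula for $v$ directly and interchange sum and integral; your version is in fact more careful about justifying that interchange (summability of $\tilde a^{(\alpha)}_{ij}$ and $\mF(v)\in L^1$), which the paper leaves implicit.
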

\begin{proof}
Taking Fourier transform of $(\delta_x^\alpha+\delta_y^\alpha)v$ and according the definition of $\{\tilde a^{(\alpha)}_{ij}\}$, we have
\[
\begin{split}
&h^\alpha \mF((\delta_x^\alpha+\delta_y^\alpha)v)(\eta,\xi)\\
=&\sum_{i,j\in\IZ}\tilde a^{(\alpha)}_{ij}\int_{\IR^2} v(x+ih,y+jh)\exp(-\bi(\eta x+\xi y))\diff x \diff  y\\
=&\sum_{i,j\in\IZ}\tilde a^{(\alpha)}_{ij} \exp(\bi(ih\eta+jh\xi))\mF(v)(\eta,\xi)\\
=&\left[ \left(4\sin^2\left(\frac{h\eta}{2}\right)\right)^{\frac{\alpha}{2}}  + \left(4\sin^2\left(\frac{h\xi}{2}\right) \right)^{\frac{\alpha}{2}} \right]\mF(v)(\eta,\xi).
\end{split}\]
Taking inverse Fourier transform on above equation, we have
\begin{equation}\label{eq:proof_boundness of discrete Riesz 1}
\begin{split}
&h^\alpha(\delta_x^\alpha+\delta_y^\alpha)v(x,y)=\mF^{-1}(\mF(h^\alpha(\delta_x^\alpha+\delta_y^\alpha)v))(x,y)\\
=&\frac{1}{4\pi^2}\int_{\IR^2} \left[ \left(4\sin^2\left(\frac{h\eta}{2}\right)\right)^{\frac{\alpha}{2}}  + \left(4\sin^2\left(\frac{h\xi}{2}\right) \right)^{\frac{\alpha}{2}} \right]\mF(v)(\eta,\xi)\exp(\bi(x\eta+ y\xi))\diff \eta \diff \xi.
\end{split}
\end{equation}
Inserting the following inequality
\[
\begin{split}
&\left(4\sin^2\left(\frac{h\eta}{2}\right)\right)^{\frac{\alpha}{2}}  + \left(4\sin^2\left(\frac{h\xi}{2}\right) \right)^{\frac{\alpha}{2}}\le2^{1-\frac{\alpha}{2}} \left(4\sin^2\left(\frac{h\eta}{2}\right) + 4\sin^2\left(\frac{h\xi}{2}\right) \right)^{\frac{\alpha}{2}}\\
\le&2^{1-\frac{\alpha}{2}} h^\alpha (\eta^2+\xi^2)^{\frac{\alpha}{2}}\le2^{1-\frac{\alpha}{2}} h^\alpha (1+\sqrt{\eta^2+\xi^2})^{\alpha},
\end{split}\]
into \eqref{eq:proof_boundness of discrete Riesz 1} yields that
\[\begin{split}
&|(\delta_x^\alpha+\delta_y^\alpha)v(x,y)|
\!\le\!\frac{1}{4\pi^2 h^\alpha}\!\int_{\IR^2} \!\left[ \left(\!4\sin^2\left(\frac{h\eta}{2}\right)\right)^{\frac{\alpha}{2}} \!\!\! +\! \left(\!4\sin^2\left(\frac{h\xi}{2}\right) \right)^{\frac{\alpha}{2}} \right]\!|\mF(v)(\eta,\xi)|\diff \eta \diff \xi\\
\le&\frac{2^{1-\frac{\alpha}{2}}}{4\pi^2}\int_{\IR^2}(1+\sqrt{\eta^2+\xi^2})^{\alpha}|\mF(v)(\eta,\xi)|\diff \eta \diff \xi
=\frac{2^{1-\frac{\alpha}{2}}}{4\pi^2}\|v\|_{\mL^\alpha(\IR^2)},
\end{split}
\]
which completes the proof.
\end{proof}

Let $\{\hat a^{(\alpha)}_{ij}\}$ be the Fourier coefficients of function $[4\sin^2(\frac{\eta}{2})]^{\frac{\alpha}{2}} [4\sin^2(\frac{\xi}{2}) ]^{\frac{\alpha}{2}}$, i.e., 
\[\hat a^{(\alpha)}_{ij} :=\frac{1}{4\pi^2}\int_{-\pi}^{\pi}\int_{-\pi}^{\pi} [4\sin^2(\frac{\eta}{2})]^{\frac{\alpha}{2}}   [4\sin^2(\frac{\xi}{2}) ]^{\frac{\alpha}{2}} \exp(-\bi (i\eta+j\xi))\diff \eta \diff \xi.\]
According to the definition of the coefficient $\{a^{(\alpha)}_{i}\}$ in \eqref{eq:single coefficient discrete Riesz}, it is easy to verify that $\hat a^{(\alpha)}_{ij}=a^{(\alpha)}_i a^{(\alpha)}_j$, for $i,j\in \IZ$. Combining this equation with the definition of operator $\delta^\alpha_x\delta^\alpha_y$, we have 
\[\delta^\alpha_x\delta^\alpha_y v(x,y)=\frac{1}{h^{2\alpha}}\sum_{i,j\in\IZ}\hat a^{(\alpha)}_{ij} v(x+ih,y+jh).\]
Then following the similar ways in the proof of Lemma \ref{lemma:boundness of discrete Riesz} gives the following bound of $\delta_x^\alpha\delta_y^\alpha v$ if the function $v$ satisfies a certain regularity assumption.
\begin{lemma}\label{lemma:boundness of ADI reminder}
Suppose $\alpha\in(1,2)$ and $v\in\mH^\alpha(\IR^2)\cap \mC(\IR^2)$, $\mF v\in L^1(\IR^2)$. Then
\[ |\delta_x^\alpha\delta_y^\alpha v(x,y)|  \le  \frac{1}{4\pi^2}\|v\|_{\mH^{\alpha}(\IR^2)},\quad (x,y)\in\IR^2.\]
\end{lemma}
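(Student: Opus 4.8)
The plan is to mimic exactly the structure of the proof of Lemma \ref{lemma:boundness of discrete Riesz}, replacing the symbol function $(4\sin^2(\eta/2))^{\alpha/2}+(4\sin^2(\xi/2))^{\alpha/2}$ by its product counterpart $(4\sin^2(\eta/2))^{\alpha/2}(4\sin^2(\xi/2))^{\alpha/2}$, which is the generating function of the coefficients $\{\hat a^{(\alpha)}_{ij}\}$. First I would take the Fourier transform of $h^{2\alpha}\delta_x^\alpha\delta_y^\alpha v$ using the representation $\delta_x^\alpha\delta_y^\alpha v(x,y)=h^{-2\alpha}\sum_{i,j\in\IZ}\hat a^{(\alpha)}_{ij}v(x+ih,y+jh)$ established just above the statement; the shift property of the Fourier transform turns each translate into a factor $\exp(\bi(ih\eta+jh\xi))$, and summing against $\hat a^{(\alpha)}_{ij}$ reproduces the generating function evaluated at $(h\eta,h\xi)$. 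This yields
\[
h^{2\alpha}\mF(\delta_x^\alpha\delta_y^\alpha v)(\eta,\xi)=\left(4\sin^2\!\left(\tfrac{h\eta}{2}\right)\right)^{\frac{\alpha}{2}}\left(4\sin^2\!\left(\tfrac{h\xi}{2}\right)\right)^{\frac{\alpha}{2}}\mF(v)(\eta,\xi).
\]
Then I would apply $\mF^{-1}$ (legitimate since $\mF v\in L^1$ and $v\in\mC(\IR^2)$) to write $\delta_x^\alpha\delta_y^\alpha v(x,y)$ as an absolutely convergent integral.

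Next, the pointwise bound. The elementary inequality $|2\sin(t/2)|\le|t|$ gives $4\sin^2(h\eta/2)\le h^2\eta^2$ and $4\sin^2(h\xi/2)\le h^2\xi^2$, hence
\[
\left(4\sin^2\!\left(\tfrac{h\eta}{2}\right)\right)^{\frac{\alpha}{2}}\left(4\sin^2\!\left(\tfrac{h\xi}{2}\right)\right)^{\frac{\alpha}{2}}\le h^{2\alpha}|\eta|^\alpha|\xi|^\alpha.
\]
Substituting this into the integral representation and taking absolute values inside, the factors $h^{2\alpha}$ cancel and we are left with
\[
|\delta_x^\alpha\delta_y^\alpha v(x,y)|\le\frac{1}{4\pi^2}\int_{\IR^2}|\eta|^\alpha|\xi|^\alpha|\mF(v)(\eta,\xi)|\diff\eta\diff\xi=\frac{1}{4\pi^2}\|v\|_{\mH^\alpha(\IR^2)},
\]
which is the claimed estimate.

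There is essentially no serious obstacle here: the argument is a routine transcription of the earlier proof, and the only points requiring a word of care are (i) justifying the interchange of the infinite sum over $(i,j)$ with the integral defining the Fourier transform, which follows from the rapid decay of $\{\hat a^{(\alpha)}_{ij}\}=\{a^{(\alpha)}_i a^{(\alpha)}_j\}$ together with $v\in L^1(\IR^2)$, and (ii) justifying the inverse Fourier transform step, which is exactly where the hypotheses $v\in\mC(\IR^2)$ and $\mF v\in L^1(\IR^2)$ enter (the former guarantees that the Fourier inversion recovers the pointwise value of $v$, and the latter that the inversion integral converges absolutely). The only genuine difference from Lemma \ref{lemma:boundness of discrete Riesz} is that we do not need the convexity-type inequality $(a^{\alpha/2}+b^{\alpha/2})\le 2^{1-\alpha/2}(a+b)^{\alpha/2}$ used there; instead the product structure lets us bound each factor separately, which is why the constant improves to $\tfrac{1}{4\pi^2}$ with no extra power of $2$ and why the relevant norm is the "mixed" one $\|\cdot\|_{\mH^\alpha}$ rather than $\|\cdot\|_{\mL^\alpha}$.
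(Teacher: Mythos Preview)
Your proposal is correct and follows essentially the same approach as the paper's proof: compute the Fourier transform of $h^{2\alpha}\delta_x^\alpha\delta_y^\alpha v$ to obtain the symbol $(4\sin^2(h\eta/2))^{\alpha/2}(4\sin^2(h\xi/2))^{\alpha/2}$, invert, and bound this symbol by $h^{2\alpha}|\eta|^\alpha|\xi|^\alpha$. Your additional remarks on justifying the sum-integral interchange and the Fourier inversion step, and your observation about why no convexity inequality (and hence no extra factor of $2^{1-\alpha/2}$) is needed, are accurate but go slightly beyond what the paper spells out.
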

\begin{proof}
By similar techniques used in the proof of \eqref{lemma:boundness of discrete Riesz}, we have
\begin{equation}\label{eq:proof boundness of ADI reminder 1}
\begin{split}
&h^{2\alpha}\delta_x^\alpha\delta_y^\alpha v(x,y)=\mF^{-1}(\mF(h^{2\alpha}\delta_x^\alpha\delta_y^\alpha v))(x,y)\\
=&\frac{1}{4\pi^2}\int_{\IR^2}  \left(4\sin^2\left(\frac{h\eta}{2}\right)\right)^{\frac{\alpha}{2}}  \left(4\sin^2\left(\frac{h\xi}{2}\right) \right)^{\frac{\alpha}{2}} \mF(v)(\eta,\xi)\exp(\bi(x\eta+ y\xi))\diff \eta \diff \xi.
\end{split}
\end{equation}
Applying the inequality $\left(4\sin^2\left(\frac{h\eta}{2}\right)\right)^{\frac{\alpha}{2}}  \left(4\sin^2\left(\frac{h\xi}{2}\right) \right)^{\frac{\alpha}{2}}\le h^{2\alpha}|\eta|^\alpha|\xi|^\alpha$ to \eqref{eq:proof boundness of ADI reminder 1}, we get
\[|\delta_x^\alpha\delta_y^\alpha v(x,y)|\le \frac{1}{4\pi^2}\int_{\IR^2}|\eta|^\alpha|\xi|^\alpha|\mF(v)(\eta,\xi)|\diff \eta \diff \xi
=\frac{1}{4\pi^2}\|v\|_{\mH^\alpha(\IR^2)}. \]
This completes the proof.
\end{proof}

We also need the following lemma, which gives an estimation of the local error when applying \eqref{eq:center_disccritzation_fractional_Laplacian} to approximate the fractional Laplacian.
\begin{lemma}[\cite{Wang_Hao_2022}]\label{lemma:laplacian_local error}
Suppose $v\in\mL^{2+\alpha}(\IR^2)\cap\mC(\IR^2)$ and $\alpha\in(1,2)$. There exist a positive constant $c^{(\alpha)}$ independent of $h$ such that
\[| (-\Delta)^{\alpha}v(x,y) - L_h^{\alpha}v(x,y) |\le c^{(\alpha)}h^2\|v\|_{\mL^{2+\alpha}(\IR^2) },\quad (x,y)\in\IR^2.\]
\end{lemma}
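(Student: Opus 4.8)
The plan is to move everything to the Fourier side: both $(-\Delta)^{\frac{\alpha}{2}}$ and $L_h^\alpha$ are Fourier multiplier operators, so the consistency error collapses to a pointwise bound on the difference of their two symbols, which is then integrated against $|\mF(v)|$.

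First I would establish the Fourier identity for the discrete operator, exactly along the lines of the proof of Lemma~\ref{lemma:boundness of discrete Riesz}. This manipulation is legitimate because $v\in\mL^{2+\alpha}(\IR^2)\subset\mL^\alpha(\IR^2)$ by Lemma~\ref{lemma:localerr1}, so that $v$ is bounded and continuous and the stencil weights $\{a^{(\alpha)}_{ij}\}$ are absolutely summable (by the decay estimates of \cite{Hao_Zhang_2021}), which lets one interchange the series in \eqref{eq:center_disccritzation_fractional_Laplacian} with the Fourier integral. One gets
\[h^{\alpha}\mF(L_h^\alpha v)(\eta,\xi)=\left(4\sin^2\!\left(\tfrac{h\eta}{2}\right)+4\sin^2\!\left(\tfrac{h\xi}{2}\right)\right)^{\!\frac{\alpha}{2}}\mF(v)(\eta,\xi),\]
while the symbol of $(-\Delta)^{\frac{\alpha}{2}}$ is $(\eta^2+\xi^2)^{\frac{\alpha}{2}}$. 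Applying $\mF^{-1}$ to the difference and estimating under the integral sign yields
\[\left|(-\Delta)^{\frac{\alpha}{2}}v(x,y)-L_h^\alpha v(x,y)\right|\le\frac{1}{4\pi^2}\int_{\IR^2}E_h(\eta,\xi)\,|\mF(v)(\eta,\xi)|\,\diff\eta\,\diff\xi,\]
with $E_h(\eta,\xi):=\bigl|(\eta^2+\xi^2)^{\frac{\alpha}{2}}-h^{-\alpha}(4\sin^2(\tfrac{h\eta}{2})+4\sin^2(\tfrac{h\xi}{2}))^{\frac{\alpha}{2}}\bigr|$. Thus the statement reduces to the uniform symbol estimate $E_h(\eta,\xi)\le c^{(\alpha)}h^2(1+\sqrt{\eta^2+\xi^2})^{2+\alpha}$, since then $\int_{\IR^2}(1+\sqrt{\eta^2+\xi^2})^{2+\alpha}|\mF(v)(\eta,\xi)|\,\diff\eta\,\diff\xi=\|v\|_{\mL^{2+\alpha}(\IR^2)}$ closes the argument (after renaming the constant to absorb the $\tfrac{1}{4\pi^2}$).

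For the symbol estimate I would set $s:=h\eta$, $t:=h\xi$ and $\psi(u):=4\sin^2(u/2)$, so that $E_h=h^{-\alpha}\bigl|(s^2+t^2)^{\alpha/2}-(\psi(s)+\psi(t))^{\alpha/2}\bigr|$ and the target becomes $\bigl|(s^2+t^2)^{\alpha/2}-(\psi(s)+\psi(t))^{\alpha/2}\bigr|\le c^{(\alpha)}(h+\sqrt{s^2+t^2})^{2+\alpha}$. The elementary ingredients are: $0\le u^2-\psi(u)\le u^4/12$ (for the right bound, note that $g(u):=\tfrac{u^4}{12}-u^2+2(1-\cos u)$ satisfies $g(0)=g'(0)=g''(0)=0$ and $g'''\ge0$ on $[0,\infty)$); $\tfrac{4}{\pi^2}u^2\le\psi(u)\le u^2$ for $|u|\le\pi$ (Jordan's inequality); and $0\le\psi(u)\le4$ everywhere. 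Then split into two regimes. If $\sqrt{s^2+t^2}\le\pi$: apply the mean value theorem to $\lambda\mapsto\lambda^{\alpha/2}$ on the interval between $\psi(s)+\psi(t)$ and $s^2+t^2$; since $\psi(s)+\psi(t)\ge\tfrac{4}{\pi^2}(s^2+t^2)$ and $\tfrac{\alpha}{2}-1<0$, the intermediate point $\lambda$ satisfies $\lambda^{\alpha/2-1}\le(\tfrac{\pi^2}{4})^{1-\alpha/2}(s^2+t^2)^{\alpha/2-1}$, while $(s^2+t^2)-(\psi(s)+\psi(t))\le\tfrac{s^4+t^4}{12}\le\tfrac{(s^2+t^2)^2}{12}$, hence the difference is $\le\tfrac{\alpha}{24}(\tfrac{\pi^2}{4})^{1-\alpha/2}(s^2+t^2)^{\alpha/2+1}\le C_\alpha(\sqrt{s^2+t^2})^{2+\alpha}$. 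If $\sqrt{s^2+t^2}>\pi$: bound crudely by $(s^2+t^2)^{\alpha/2}+8^{\alpha/2}$, and since $\sqrt{s^2+t^2}/\pi>1$ each term is $\le C_\alpha'(\sqrt{s^2+t^2})^{2+\alpha}$. In either regime $(\sqrt{s^2+t^2})^{2+\alpha}\le(h+\sqrt{s^2+t^2})^{2+\alpha}$, giving the symbol estimate with $c^{(\alpha)}=\max\{C_\alpha,C_\alpha'\}$.

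The main obstacle is the low-frequency regime. Because $\tfrac{\alpha}{2}<1$, the derivative of $\lambda\mapsto\lambda^{\alpha/2}$ is unbounded as $\lambda\to0^+$, so a crude mean value estimate diverges near the origin. The remedy is the comparability of $\psi(s)+\psi(t)$ with $s^2+t^2$ for $|s|,|t|\le\pi$, which keeps the intermediate point bounded away from the singularity; coupled with the quadratic smallness $(s^2+t^2)-(\psi(s)+\psi(t))=O((s^2+t^2)^2)$, this beats the negative power $(s^2+t^2)^{\alpha/2-1}$ and leaves precisely the surplus factor $s^2+t^2=h^2(\eta^2+\xi^2)$, which is the claimed $O(h^2)$. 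Everything else --- the Fourier identity for $L_h^\alpha$, the absolute summability of the stencil, and the high-frequency regime --- is routine bookkeeping.
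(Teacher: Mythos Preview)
The paper does not supply its own proof of this lemma --- it is simply quoted from \cite{Wang_Hao_2022} without argument --- so there is nothing in the paper to compare against. Your Fourier-multiplier approach is correct and is precisely the standard proof used in \cite{Hao_Zhang_2021,Wang_Hao_2022}: reduce to a pointwise bound on the symbol error $E_h$, then split into a near-origin regime (where Jordan's inequality keeps the mean-value point away from the singularity of $\lambda\mapsto\lambda^{\alpha/2-1}$) and a far-field regime (crude triangle-inequality bound). One cosmetic remark: the statement as printed has a typo --- it should read $(-\Delta)^{\alpha/2}v$, not $(-\Delta)^{\alpha}v$ --- and you have silently used the correct exponent.
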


%\begin{rem}
%In this paper, Lemma \ref{lemma:laplacian_local error} requires $v\in\mL^{2+\alpha}(\IR^2)\cap\mC(\IR^2)$, while the conditions in \cite{Hao_Zhang_2021,Wang_Hao_2022} is only $v\in\mL^{2+\alpha}(\IR^2)$.
% In fact, the continuity assumption is necessary. The reason is that the proof of Lemma \ref{lemma:laplacian_local error} is based on the Fourier inversion theorem $v=\mF^{-1}\mF v$, which requires $v\in L^1(\IR^2)\cap \mC(\IR^2)$ and $\mF v\in L^1(\IR^2)$, see \cite[p. 244]{Folland_1992}.
%\end{rem}

Let $R^n_{ij}:=R^{n,1}_{ij}+R^{n,2}_{ij}+R^{n,3}_{ij}$ for $0\le n\le M$, and
 \[
 \begin{split}\check c:=\max&\left\{ \kappa c^{(\alpha)}\max_{t\in[0,T]}\|u(\cdot,\cdot,t)\|_{\mL^{2+\alpha}(\IR^2)},
 \frac{\kappa^2}{8\pi^2}\max_{t\in[0,T]}\|u(\cdot,\cdot,t)\|_{\mL^{2+\alpha}(\IR^2)},\right.\\
 &\left.\frac{1}{3}\max_{t\in[0,T]}\|\partial_t^3u(\cdot,\cdot,t)\|_{L^{\infty}(\IR^2)}+\kappa\max_{t\in[0,T]}\|\partial_t u(\cdot,\cdot,t)\|_{\mL^{2+\alpha}(\IR^2)}
 \right\},
 \end{split}\]
\[
\begin{split}
\tilde c:=\max&
\left\{
\kappa c^{(\alpha)}\max_{t\in[0,T]}\|u(\cdot,\cdot,t)\|_{\mL^{2+\alpha}(\IR^2)},  \frac{1}{12}\max_{t\in[0,T]}\|\partial_t^4u(\cdot,\cdot,t)\|_{L^{\infty}(\IR^2)} \right.  \\
&\left. +\frac{\kappa}{2}\max_{t\in[0,T]}\|\partial_t^2u(\cdot,\cdot,t)\|_{\mL^{2+\alpha}(\IR^2)}
+\frac{\kappa^2}{8\pi^2}\max_{t\in[0,T]}\|u(\cdot,\cdot,t)\|_{\mL^{2+\alpha}(\IR^2)}
    \right\}.
\end{split}\]
With above notations and lemmas, we present the estimation of the truncation errors of the S-ADI scheme.
\begin{lemma}\label{lemma:truncation error}
Suppose $u\in\mC^{4}(0,T;\mL^{2+\alpha}(\IR^2)\cap\mC(\IR^2))$. Then
\[|R^{0}_{ij}| \le \check c(\tau+\tau^2+h^2);\quad |R^{n}_{ij}| \le \tilde c(\tau^2+h^2),\quad 1\le n \le M.\]
\end{lemma}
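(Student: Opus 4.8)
The plan is to bound each of the three pieces $R^{n,1}_{ij}$, $R^{n,2}_{ij}$, $R^{n,3}_{ij}$ separately and then add the bounds, treating the cases $n\ge 1$ and $n=0$ in parallel since the structure is the same but the time-discretization orders differ. For $R^{n,1}_{ij}=\kappa L_h^\alpha u(x_i,y_j,t_n)-\kappa(-\Delta)^{\frac\alpha2}u(x_i,y_j,t_n)$, this is exactly the spatial consistency error of the difference approximation \eqref{eq:center_disccritzation_fractional_Laplacian}, so Lemma \ref{lemma:laplacian_local error} applied at the fixed time $t_n$ (using $u(\cdot,\cdot,t_n)\in\mL^{2+\alpha}(\IR^2)\cap\mC(\IR^2)$, which follows from the hypothesis $u\in\mC^4(0,T;\mL^{2+\alpha}(\IR^2)\cap\mC(\IR^2))$) gives $|R^{n,1}_{ij}|\le\kappa c^{(\alpha)}h^2\|u(\cdot,\cdot,t_n)\|_{\mL^{2+\alpha}(\IR^2)}\le \kappa c^{(\alpha)}\max_{t}\|u(\cdot,\cdot,t)\|_{\mL^{2+\alpha}}\,h^2$, which is the first entry appearing in the max defining $\check c$ and $\tilde c$.

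Next, for $R^{n,2}_{ij}$ I would split it into its two natural summands. The first summand is the standard time-truncation error of the centered second difference, $\frac{U^{n+1}_{ij}-2U^n_{ij}+U^{n-1}_{ij}}{\tau^2}-\partial_t^2U^n_{ij}$, which by Taylor expansion is $\frac{\tau^2}{12}\partial_t^4u(x_i,y_j,\zeta)$ for some intermediate time, hence bounded by $\frac1{12}\max_t\|\partial_t^4u(\cdot,\cdot,t)\|_{L^\infty}\,\tau^2$ when $n\ge1$; for $n=0$ the one-sided formula $\frac{U^1_{ij}-U^0_{ij}-\tau\partial_tU^0_{ij}}{\tau^2/2}-\partial_t^2U^0_{ij}$ is only first order, giving $\frac13\max_t\|\partial_t^3u(\cdot,\cdot,t)\|_{L^\infty}\,\tau$. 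The second summand of $R^{n,2}_{ij}$ is $\kappa(\delta_x^\alpha+\delta_y^\alpha)\frac{U^{n+1}_{ij}-2U^n_{ij}+U^{n-1}_{ij}}{2}$ (respectively $\kappa(\delta_x^\alpha+\delta_y^\alpha)(U^1_{ij}-U^0_{ij})$); I would write the bracketed grid-difference as $\tau^2$ (resp. $\tau$) times a centered second difference (resp. first difference) of $u$ in $t$, so it equals $(\delta_x^\alpha+\delta_y^\alpha)$ applied to the function $\tfrac{\tau^2}{2}\partial_t^2 u(\cdot,\cdot,\theta)$ (resp. $\tau\,\partial_t u(\cdot,\cdot,\theta)$) at some time $\theta$, and then apply Lemma \ref{lemma:boundness of discrete Riesz} together with Lemma \ref{lemma:localerr1} (which embeds $\mL^{2+\alpha}$ into $\mL^\alpha$) to get a bound proportional to $\kappa\max_t\|\partial_t^2u(\cdot,\cdot,t)\|_{\mL^{2+\alpha}}\tau^2$ (resp. $\kappa\max_t\|\partial_tu(\cdot,\cdot,t)\|_{\mL^{2+\alpha}}\tau$). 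The constant $\frac{2^{1-\alpha/2}}{4\pi^2}\le\frac12$ from Lemma \ref{lemma:boundness of discrete Riesz} absorbs into the $\frac12$ already written; combining the two summands reproduces the corresponding entry in the max defining $\check c$ (for $n=0$) and $\tilde c$ (for $n\ge1$).

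For $R^{n,3}_{ij}=\frac{\kappa^2\tau^2}{4}\delta_x^\alpha\delta_y^\alpha(U^{n+1}_{ij}-2U^n_{ij}+U^{n-1}_{ij})$ (resp. $\frac{\kappa^2\tau^2}{2}\delta_x^\alpha\delta_y^\alpha(U^1_{ij}-U^0_{ij})$), I again replace the grid-difference by $\tau^2\partial_t^2u(\cdot,\cdot,\theta)$ plus higher-order terms — actually it suffices to note $|U^{n+1}_{ij}-2U^n_{ij}+U^{n-1}_{ij}|$ is $O(\tau^2)$ but more precisely one should realize this as $\delta_x^\alpha\delta_y^\alpha$ of a function bounded in $\mH^\alpha$. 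The cleanest route is to write the second difference as an average of $\partial_t^2 u$ over a short interval, keeping it as $\delta_x^\alpha\delta_y^\alpha$ acting on $u$ (not $\partial_t^2u$), so that $|\delta_x^\alpha\delta_y^\alpha(\cdots)|\le 4\max_t|\delta_x^\alpha\delta_y^\alpha u(x_i,y_j,t)|$; then Lemma \ref{lemma:boundness of ADI reminder} combined with the second inequality of Lemma \ref{lemma:localerr1}, $\|u\|_{\mH^\alpha}\le\frac12\|u\|_{\mL^{2+\alpha}}$, gives $|\delta_x^\alpha\delta_y^\alpha u|\le\frac1{8\pi^2}\|u\|_{\mL^{2+\alpha}}$, so $|R^{n,3}_{ij}|\le\frac{\kappa^2}{8\pi^2}\max_t\|u(\cdot,\cdot,t)\|_{\mL^{2+\alpha}}\,\tau^2$, matching the last entry in both $\check c$ and $\tilde c$ (the factor $4$ in the $n\ge1$ case cancels the $\frac14$; the factor $2$ in the $n=0$ case is dominated since $|U^1-U^0|\le\tau\max\|\partial_tu\|$ is even smaller, so $\tau^2$ is conservative). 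Finally, taking $R^n_{ij}=R^{n,1}_{ij}+R^{n,2}_{ij}+R^{n,3}_{ij}$, the triangle inequality and the definition of $\check c$ as the max of the three $n=0$ contributions (and $\tilde c$ of the three $n\ge1$ contributions) yield $|R^0_{ij}|\le\check c(\tau+\tau^2+h^2)$ and $|R^n_{ij}|\le\tilde c(\tau^2+h^2)$ for $1\le n\le M$. The main obstacle is purely bookkeeping: one must be careful to express each grid-difference of $u$ in $t$ as the operator ($\delta_x^\alpha+\delta_y^\alpha$ or $\delta_x^\alpha\delta_y^\alpha$) acting on a time-derivative of $u$ (via an integral-remainder form of Taylor's theorem that commutes with the spatial operator), so that Lemmas \ref{lemma:boundness of discrete Riesz}--\ref{lemma:boundness of ADI reminder} are applicable with the right $\mL^{2+\alpha}$ norms; once that bridge is set up, every constant falls out of the stated lemmas.
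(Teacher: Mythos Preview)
Your proposal is correct and follows essentially the same route as the paper: bound $R^{n,1}_{ij}$ via Lemma~\ref{lemma:laplacian_local error}, split $R^{n,2}_{ij}$ into a pure time-truncation piece (Taylor with the constants $\tfrac1{12}$ and $\tfrac13$) and a $(\delta_x^\alpha+\delta_y^\alpha)$ piece handled by Lemmas~\ref{lemma:localerr1} and~\ref{lemma:boundness of discrete Riesz}, and bound $R^{n,3}_{ij}$ by a crude triangle inequality on the time-difference together with Lemmas~\ref{lemma:localerr1} and~\ref{lemma:boundness of ADI reminder}. The paper carries out exactly this plan, and in particular uses the integral-remainder form of Taylor (not the mean-value form) for the $(\delta_x^\alpha+\delta_y^\alpha)$ part of $R^{n,2}_{ij}$, precisely for the commutation reason you identify in your final paragraph.
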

\begin{proof}
According to $\left|R^n_{ij}\right|\le\left|R^{n,1}_{ij}\right|+\left|R^{n,2}_{ij}\right|+\left|R^{n,3}_{ij}\right|$, we next estimate the three terms on the right-hand side.
By Lemma \ref{lemma:laplacian_local error}, we have
\[\left|R^{n,1}_{ij}\right|\le \kappa c^{(\alpha)}h^2\|u(\cdot,\cdot,t_n)\|_{\mL^{2+\alpha}(\IR^2)}\le \kappa c^{(\alpha)}h^2\max_{t\in[0,T]}\|u(\cdot,\cdot,t)\|_{\mL^{2+\alpha}(\IR^2)},\quad 0\le n \le M.\]
Using Lemma \ref{lemma:localerr1} and Lemma \ref{lemma:boundness of ADI reminder}, we get
\[
\begin{split}
\left|R^{0,3}_{ij}\right|\le& \frac{\kappa^2\tau^2}{2}(|\delta_x^\alpha\delta_y^\alpha u(x_i,y_j,t_1)|+|\delta_x^\alpha\delta_y^\alpha u(x_i,y_j,t_0)|)\\
\le&\frac{\kappa^2\tau^2}{2}\left(\frac{1}{4\pi^2}\|u(\cdot,\cdot,t_1)\|_{\mH^\alpha(\IR^2)} +\frac{1}{4\pi^2}\|u(\cdot,\cdot,t_0)\|_{\mH^\alpha(\IR^2)} \right)\\
\le&\frac{\kappa^2\tau^2}{4\pi^2}\max_{t\in[0,T]}\|u(\cdot,\cdot,t)\|_{\mH^\alpha(\IR^2)} \le \frac{\kappa^2\tau^2}{8\pi^2}\max_{t\in[0,T]}\|u(\cdot,\cdot,t)\|_{\mL^{2+\alpha}(\IR^2)}.
\end{split}\]
Similarly,
\[\left|R^{n,3}_{ij}\right|\le\frac{\kappa^2\tau^2}{8\pi^2}\max_{t\in[0,T]}\|u(\cdot,\cdot,t)\|_{\mL^{2+\alpha}(\IR^2)},\quad 1\le n\le M.\]
Applying Taylor formula and Lemmas \ref{lemma:boundness of discrete Riesz}--\ref{lemma:localerr1} to $R^{0,2}_{ij}$ yields
\[
\begin{split}
\left|R^{0,2}_{ij}\right|
\le& \left|\frac{2}{\tau^2}\times\frac{1}{2!}\int_{0}^{\tau}\partial_t^3 u(x_i,y_j,\eta)(\tau-\eta)^2 \diff \eta\right|
+\kappa\left|(\delta_x^\alpha+\delta_y^\alpha)\int_0^\tau \partial_t u(x_i,y_j,\eta)\diff  \eta\right|\\
\le&\frac{1}{\tau^2}\max_{t\in[0,T]}\|\partial_t^3u(\cdot,\cdot,t)\|_{L^{\infty}(\IR^2)}\int_0^{\tau}(\tau-\eta)^2\diff \eta
+\tau\kappa\max_{t\in[0,T]}\|\partial_t u(\cdot,\cdot,t)\|_{\mL^{\alpha}(\IR^2)}\\
\le & \left(\frac{1}{3}\max_{t\in[0,T]}\|\partial_t^3u(\cdot,\cdot,t)\|_{L^{\infty}(\IR^2)}+\kappa\max_{t\in[0,T]}\|\partial_t u(\cdot,\cdot,t)\|_{\mL^{2+\alpha}(\IR^2)}\right)\tau.
\end{split}
\]
Similarly, for $1\le n\le M$, we have
{\small
\[\begin{split}
\left|R^{n,2}_{ij}\right|\!\le\!
&\left| \frac{1}{\tau^2}\left(  \frac{1}{3!}\int_0^\tau \!\partial_t^4 u(x_i,y_j,t_n\!+\!\eta) (\tau\!-\!\eta)^3 \diff \eta
+\!\frac{1}{3!}\int_0^{-\tau} \!\partial_t^4 u(x_i,y_j,t_n\!+\!\eta) (-\!\tau\!-\!\eta)^3 \diff \eta
\right)  \right|\\
&+\left| \frac{\kappa}{2}(\delta_x^\alpha\!+\!\delta_y^\alpha)\left(  \int_0^\tau \partial_t^2 u(x_i,y_j,t_n\!+\!\eta) (\tau\!-\!\eta) \diff \eta
+\!\!\int_0^{-\tau}\!\! \partial_t^2 u(x_i,y_j,t_n\!+\!\eta) (-\!\tau\!-\!\eta) \diff \eta
\right)  \right|\\
\le&\frac{1}{6\tau^2}\max_{t\in[0,T]}\|\partial_t^4 u(\cdot,\cdot,t)\|_{L^{\infty}(\IR^2)}\left( \int_0^\tau (\tau-\eta)^3\diff \eta + \int_0^{-\tau} (-\tau-\eta)^3\diff \eta\right)\\
&+\frac{\kappa}{2}\max_{t\in[0,T]}\|\partial_t^2 u(\cdot,\cdot,t)\|_{\mL^{\alpha}(\IR^2)}\left( \int_0^\tau (\tau-\eta)\diff \eta + \int_0^{-\tau} (-\tau-\eta)\diff \eta\right)\\
\le&\left(\frac{1}{12}\max_{t\in[0,T]}\|\partial_t^4 u(\cdot,\cdot,t)\|_{L^{\infty}(\IR^2)} + \frac{\kappa}{2}\max_{t\in[0,T]}\|\partial_t^2 u(\cdot,\cdot,t)\|_{\mL^{2+\alpha}(\IR^2)}\right)\tau^2.
\end{split}\]}
Combing above inequalities completes the proof.
\end{proof}

%\begin{lemma}\label{lemma:GL_ineq}
%Suppose that $\{b_n\}_{0\le n\le M}$ is a positive sequence, $\Delta t,\epsilon_0,\epsilon_1,\epsilon_2,\epsilon_3$ are positive constants, and the following inequality holds:
%\[b_{n+1}^2 - b_n^2 \le \Delta t \epsilon_1 b_n(b_{n+1} + b_n) +\Delta t\epsilon_0\epsilon_2(b_{n+1} + b_n) + \Delta t\epsilon_0^2 \epsilon_3,\quad 0\le n\le M-1.\]
%Then $b_n$ has the estimation:
%\begin{equation}\label{eq:estimation_an}
%b_n\le \exp(\epsilon_1M\Delta t)\max\{b_0,\epsilon_0\} +\varphi(\epsilon_1M\Delta t)M\Delta t(\epsilon_2+\epsilon_3)\epsilon_0,\quad 1\le n \le M,
%\end{equation}
%where $\varphi(x)=(\exp(x)-1)/x$.
%\end{lemma}

After estimating the truncation error carefully, we come to give the estimation of the global error of the S-ADI scheme.

\begin{theorem}\label{theorem:convergence}
Suppose the solution of the initial boundary value problem \eqref{eq:wave_equation}--\eqref{eq:initial condition 2} $u\in\mC^{4}(0,T;\mL^{2+\alpha}(\IR^2)\cap\mC(\IR^2))$. Then the error of S-ADI scheme \eqref{eq:ADI scheme} and \eqref{eq:ADI scheme initial} has the following estimation for $1\le n\le M$:
\[\|e^n\|\le C_1(\tau^2+\tau^3+\tau h^2) + C_2(\tau^2+h^2), \]
where $e^n:=\{e^n_{ij}:=U^n_{ij}-u^n_{ij}~|~(i,j)\in \IZ^2\}$, 
\[C_1:=\sqrt{\frac{2c_\alpha}{\kappa}}\exp\left(TL\sqrt{\frac{2c_\alpha}{\kappa}}\right)\frac{\check c(b\!-\!a)}{2},\quad 
C_2:=\sqrt{\frac{2c_\alpha}{\kappa}}\phi\left(TL\sqrt{\frac{2c_\alpha}{\kappa}}\right)T\tilde c(b\!-\!a),\]
and $\phi(x)=(\exp(x)-1)/x$.
\end{theorem}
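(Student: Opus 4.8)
The plan is to re-run the energy argument of Theorem \ref{theorem:stability}, but with the truncation error $R^n_{ij}=R^{n,1}_{ij}+R^{n,2}_{ij}+R^{n,3}_{ij}$ now playing the role of a forcing term rather than of an initial perturbation. First I subtract the S-ADI scheme \eqref{eq:ADI scheme} from the consistency relation \eqref{eq:ADI with truncation error} (and \eqref{eq:ADI scheme initial} from \eqref{eq:ADI with truncation error initial} for the first step). Because $u^0_{ij}=\varphi_1(x_i,y_j)=U^0_{ij}$ and $\partial_t u^0_{ij}=\varphi_2(x_i,y_j)=\partial_t U^0_{ij}$, the error $e^n$ satisfies $e^0=0$, $\partial_t e^0=0$, and for $n\ge1$
\[
\begin{aligned}
&\frac{e^{n+1}_{ij}-2e^{n}_{ij}+e^{n-1}_{ij}}{\tau^2} +\kappa(\delta_x^\alpha+\delta_y^{\alpha})\frac{e^{n+1}_{ij}-2e^n_{ij}+e^{n-1}_{ij}}{2} +\frac{\kappa^2\tau^2}{4}\delta_x^\alpha \delta_y^\alpha(e^{n+1}_{ij}-2e^{n}_{ij}+e^{n-1}_{ij})\\
&\qquad = -\kappa L_h^\alpha e^n_{ij} + g(U^n_{ij}) - g(u^n_{ij}) + R^n_{ij},
\end{aligned}
\]
which is exactly \eqref{eq:stability proof 1} with $w$ replaced by $e$ and the extra right-hand term $R^n_{ij}$; likewise the first-step relation is \eqref{eq:stability proof 9} with $w\to e$ plus $R^0_{ij}$. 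This observation is what makes the entire machinery of Section \ref{section:stability} transfer unchanged.

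Next I take over the quantity $H_n$ from the stability proof (with $w\to e$). Multiplying the $n\ge1$ error equation by $h^2(e^{n+1}_{ij}-e^{n-1}_{ij})/\tau$ and summing over $\mI_h$ reproduces \eqref{eq:stability proof 31} with one new term, so $\frac{H_n^2-H_{n-1}^2}{\tau}=I_2+\langle R^n,\delta_t e^{n+\frac12}+\delta_t e^{n-\frac12}\rangle$. Estimating $I_2$ by the Lipschitz bound $|g(U^n_{ij})-g(u^n_{ij})|\le L|e^n_{ij}|$ together with \eqref{eq:stability proof 6}--\eqref{eq:stability proof 7} exactly as before, and bounding the new term by $\|R^n\|(H_n+H_{n-1})$ via Cauchy--Schwarz and \eqref{eq:stability proof 7}, I get, after dividing by $H_n+H_{n-1}$,
\[
H_n\le\Bigl(1+\tau L\sqrt{\tfrac{2c_\alpha}{\kappa}}\Bigr)H_{n-1}+\tau\|R^n\| \le e^{\tau\beta}H_{n-1}+\tau\|R^n\|,\qquad 1\le n\le M-1,
\]
with $\beta:=L\sqrt{2c_\alpha/\kappa}$. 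For $H_0$ I repeat the computation leading to \eqref{eq:stability proof 11}, now carrying the extra term $\frac{\tau}{2}\langle R^0,\delta_t e^{\frac12}\rangle$; since $e^0=0$ forces $\|e^1\|_A^2=\tau^2\|\delta_t e^{\frac12}\|_A^2$, the first bracketed group on the left of \eqref{eq:stability proof 11} becomes exactly $H_0^2$, hence $H_0^2=\frac{\tau}{2}\langle R^0,\delta_t e^{\frac12}\rangle\le\frac{\tau}{2}\|R^0\|\,H_0$, i.e. $H_0\le\frac{\tau}{2}\|R^0\|$ — this collapse is the origin of the factor $\tfrac12$ in $C_1$.

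Finally I run a discrete Gr\"onwall step. Iterating the recursion gives $H_m\le e^{T\beta}H_0+\tau\sum_{k=1}^m e^{(m-k)\tau\beta}\|R^k\|$ for $0\le m\le M-1$, and $e^{\tau\beta}-1\ge\tau\beta$ yields $\tau\sum_{k=1}^m e^{(m-k)\tau\beta}\le(e^{m\tau\beta}-1)/\beta\le T\phi(T\beta)$. I then convert the pointwise bounds of Lemma \ref{lemma:truncation error} into norm bounds through $\|R^n\|\le hN\max_{ij}|R^n_{ij}|\le(b-a)\max_{ij}|R^n_{ij}|$ (using $hN=N(b-a)/(N+1)<b-a$), so $\|R^0\|\le(b-a)\check c(\tau+\tau^2+h^2)$ and $\|R^k\|\le(b-a)\tilde c(\tau^2+h^2)$. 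Combining with \eqref{eq:stability proof 6} in the form $\|e^n\|\le\sqrt{2c_\alpha/\kappa}\,H_{n-1}$ (valid for $1\le n\le M$) and with $H_0\le\frac{(b-a)\check c}{2}(\tau^2+\tau^3+\tau h^2)$ gives precisely $\|e^n\|\le C_1(\tau^2+\tau^3+\tau h^2)+C_2(\tau^2+h^2)$ with $C_1,C_2$ as stated. The steps requiring care are the verification that the error recursion is literally the stability recursion plus the forcing $R^n$, the $e^0=0$ collapse in the $H_0$ estimate, and the bookkeeping in the Gr\"onwall step that produces $\phi(T\beta)$ (rather than $\exp(T\beta)$) on the $C_2$ term; none of these is a genuine obstacle, since the heavy analysis was already done in Theorem \ref{theorem:stability} and Lemma \ref{lemma:truncation error}.
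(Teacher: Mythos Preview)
Your proposal is correct and follows essentially the same route as the paper: subtract the scheme from the consistency relation to obtain the error equation, multiply by $h^2(e^{n+1}_{ij}-e^{n-1}_{ij})/\tau$, introduce the same energy quantity (the paper calls it $E_n$ rather than $H_n$), derive the recursion $E_n\le(1+\tau\beta)E_{n-1}+\tau\|R^n\|$, iterate via discrete Gr\"onwall to produce the $\phi(T\beta)$ factor, and handle the first step separately using $e^0=0$ to collapse $E_0^2$ and obtain $E_0\le\frac{\tau}{2}\|R^0\|$. The only cosmetic differences are that the paper multiplies the first-step error equation directly by $h^2 e^1_{ij}/2$ rather than routing through the analogue of \eqref{eq:stability proof 11}, and iterates with $(1+\tau\beta)^k$ rather than $e^{k\tau\beta}$; both lead to the identical bounds.
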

\begin{proof}
Subtracting \eqref{eq:ADI scheme} from \eqref{eq:ADI with truncation error}, we have
\begin{equation}\label{eq:convergence proof1}
  \begin{split}
&\frac{e^{n+1}_{ij}-2e^{n}_{ij}+e^{n-1}_{ij}}{\tau^2} \!+\kappa(\delta_x^\alpha+\delta_y^{\alpha})\frac{e^{n+1}_{ij}-2e^n_{ij}+e^{n-1}_{ij}}{2} +\frac{\kappa^2\tau^2}{4}\delta_x^\alpha \delta_y^\alpha(e^{n+1}_{ij}-2e^{n}_{ij}+e^{n-1}_{ij})\\
 &=-\kappa L_h^\alpha e^n_{ij} + g(U^n_{ij}) - g(u^n_{ij}) + R^n_{ij},\quad n\ge1.
\end{split}
\end{equation}
Multiply $h^2\frac{e^{n+1}_{ij} - e^{n-1}_{ij} }{\tau}$ on both sides of \eqref{eq:convergence proof1} and sum them for $(i,j)\in\mI_h$. Then we have
\begin{equation}\label{eq:convergence proof2}
\begin{split}
&\frac{\|\delta_t e^{n+\frac{1}{2}}\|^2 - \|\delta_t e^{n-\frac{1}{2}}\|^2}{\tau} \\
&+ \frac{\tau\kappa}{2}(\|\delta_t e^{n+\frac{1}{2}}\|_{\tilde A}^2-\|\delta_t e^{n-\frac{1}{2}}\|_{\tilde A}^2) +\frac{\kappa^2\tau^3}{4}(\|\delta_t e^{n+\frac{1}{2}}\|_B^2-\|\delta_t e^{n-\frac{1}{2}}\|_B^2)\\
=&-\kappa\langle e^n,\delta_t e^{n+\frac{1}{2}}+\delta_t e^{n-\frac{1}{2}}\rangle_A+h^2\sum_{i=1}^{N}\sum_{j=1}^{N}(g(U^n_{ij})-g(u^n_{ij}))(\delta_t e^{n+\frac{1}{2}}_{ij}+\delta_t e^{n-\frac{1}{2}}_{ij})\\
&+\langle R^n,\delta_t e^{n+\frac{1}{2}}+\delta_t e^{n-\frac{1}{2}}\rangle,
\end{split}
\end{equation}
where notation $\delta_t e^{n\pm \frac{1}{2}}:=\frac{\pm e^{n\pm1} \mp e^{n} }{\tau}$ has been used.
Denote
\[
\begin{split}
E_n:=&\left[\|\delta_t e^{n+\frac{1}{2}}\|^2 +\frac{\tau^2\kappa}{2}(\|\delta_t e^{n+\frac{1}{2}}\|_{\tilde A}^2 -\|\delta_t e^{n+\frac{1}{2}}\|_A^2) \right.\\
&\left.+\frac{\kappa}{2}(\|e^{n+1}\|_A^2 + \| e^n \|_A^2 ) +\frac{\kappa^2\tau^4}{4}\|\delta_t e^{n+\frac{1}{2}}\|_B^2 \right]^{\frac{1}{2}}.
\end{split}
\]
Applying the similar techniques used in \eqref{eq:stability proof 3} and \eqref{eq:stability proof 4} to the right-hand side of \eqref{eq:convergence proof2} yields
\begin{equation}\label{eq:convergence proof3}
\frac{E_n^2\!-\!E_{n-1}^2}{\tau}\!\le\! L\|e^n\|(\|\delta_t e^{n+\frac{1}{2}}\|\!+\!\|\delta_t e^{n-\frac{1}{2}}\|)\!+\!\|R^n\|(\|\delta_t e^{n+\frac{1}{2}}\|\!+\!\|\delta_t e^{n-\frac{1}{2}}\|),~~ 1\!\le \!n\!\le\! M\!-\!1.
\end{equation}
According to the definition of $E_n$ and Lemma \ref{lemma:norm_relationship}, we get
\begin{equation}\label{eq:convergence proof4}
\|e^n\|\le\sqrt{c_\alpha}\|e^n\|_A=\sqrt{\frac{2c_\alpha}{\kappa}}\sqrt{\frac{\kappa}{2}\|e^n\|_A^2} \le \sqrt{\frac{2c_\alpha}{\kappa}}E_{n-1},
\end{equation}
and
\begin{equation}\label{eq:convergence proof5}
\|\delta_t e^{n+\frac{1}{2}}\| \le E_n,\quad \|\delta_t e^{n-\frac{1}{2}}\|\le E_{n-1}.
\end{equation}
Inserting \eqref{eq:convergence proof4} and \eqref{eq:convergence proof5} into \eqref{eq:convergence proof3} and using Lemma \ref{lemma:truncation error}, we have
\[
\begin{split}
\frac{E_n^2-E_{n-1}^2}{\tau}
\le &\left(L\sqrt{\frac{2c_\alpha}{\kappa}}E_{n-1}+\|R^n\|\right)(E_n+E_{n-1})\\
\le & \left(L\sqrt{\frac{2c_\alpha}{\kappa}}E_{n-1}+\tilde c(b-a)(\tau^2+h^2)\right)(E_n+E_{n-1}) ,\quad 1\le n \le M-1,
\end{split}
\]
which with Lemma \ref{lemma:truncation error} indicates
\begin{equation}\label{eq:convergence proof6}
\begin{split}
E_n\le&\left(1+\tau L\sqrt{\frac{2c_\alpha}{\kappa}}\right)E_{n-1} +\tau \tilde c(b-a)(\tau^2+h^2) \\
\le&\left(1+\tau L\sqrt{\frac{2c_\alpha}{\kappa}}\right)^2E_{n-2} + \left[1+\left(1+\tau L\sqrt{\frac{2c_\alpha}{\kappa}}\right)\right]\tau \tilde c(b-a)(\tau^2+h^2)\\
&\cdots\\
\le&\left(1+\tau L\sqrt{\frac{2c_\alpha}{\kappa}}\right)^nE_{0}+\sum_{k=0}^{n-1}\left(1+\tau L\sqrt{\frac{2c_\alpha}{\kappa}}\right)^k \tau \tilde c(b-a)(\tau^2+h^2)\\
\le& \exp\left(TL\sqrt{\frac{2c_\alpha}{\kappa}}\right)E_0 + \phi\left(TL\sqrt{\frac{2c_\alpha}{\kappa}}\right)T\tilde c(b-a)(\tau^2+h^2).
\end{split}
\end{equation}
The rest work is to estimate $E_0$. According to $e^0_{ij}=0$, we have
\[\begin{split}
E^2_0=&\frac{1}{\tau^2}\|e^1\|^2+\frac{\kappa}{2}(\|e^1\|_{\tilde A}^2-\|e^1\|_A^2)+\frac{\kappa}{2}\|e^1\|_A^2+\frac{\kappa^2\tau^2}{4}\|e^1\|_B^2\\
=&\frac{1}{\tau^2}\|e^1\|^2+\frac{\kappa}{2}\|e^1\|_{\tilde A}^2+\frac{\kappa^2\tau^2}{4}\|e^1\|_B^2.
\end{split}\]
Subtracting \eqref{eq:ADI scheme initial} from \eqref{eq:ADI with truncation error initial} gives
\begin{equation}\label{eq:convergence proof7}
\frac{2e^1_{ij}}{\tau^2}+\kappa(\delta_x^\alpha+\delta_y^\alpha)e^1_{ij}+\frac{\kappa^2\tau^2}{2}\delta_x^\alpha\delta_y^\alpha e^1_{ij}=R^0_{ij}.
\end{equation}
Multiply $h^2\frac{e^{1}_{ij}}{2}$ on both sides of \eqref{eq:convergence proof7} and sum them for $(i,j)\in\mI_h$. Then we have
\[
\begin{split}
&\frac{1}{\tau^2}\|e^1\|^2+\frac{\kappa}{2}\|e^1\|_{\tilde A}^2+\frac{\kappa^2\tau^2}{4}\|e^1\|_B^2\\
=&\langle\frac{\tau}{2}R^0,\frac{1}{\tau}e^1\rangle \le\frac{\tau}{2} \|R^0\| \frac{1}{\tau}\|e^1\|
\le\frac{\tau}{2}\check c(b-a)(\tau+\tau^2+h^2)E_0,
\end{split}\]
which implies that
\begin{equation}\label{eq:convergence proof8}
E^0\le\frac{\check c}{2}(b-a)(\tau^2+\tau^3+\tau h^2).
\end{equation}
Inserting \eqref{eq:convergence proof8} into \eqref{eq:convergence proof6} and using \eqref{eq:convergence proof4} complete the proof.
\end{proof}

\begin{remark}
If we assume that $\tau\le1$, the result of Theorem \ref{theorem:convergence} could be reformed as
\[\|e^n\| \le \max\{2C_1,C_2\}(\tau^2+h^2),\quad 1\le n\le M.\]
This means the S-ADI scheme for the fractional Laplacian wave equation is second-order convergent.
\end{remark}

%\section{Implementation}
%To present the matrix-vector format of schemes \eqref{eq:ADI scheme} and \eqref{eq:ADI scheme initial}, we introduce some matrix and vector symbols.
%Let
%\[\bu^{n}:=[u^n_{1,1},u^n_{2,1},\ldots,u^n_{N_x1},u^n_{1,2},\ldots,u^n_{N_x2},\ldots,u^n_{1N_y},u^n_{2N_y},\ldots,u^n_{NxNy}]^T\in\IR^{N}.\]
%And $\bg:\IR^{N}\mapsto \IR^{N}$ is a vector function, for any vector $\bv^n=[v_1,v_2,\ldots,v_N]\in\IR^N$:
%\[\bg(\bv)=[g(v^n_{1}),g(v^n_{2}),\ldots,g(v^n_{N})]^T.\]
%\[
%A_x:=
%\left[
%\begin{array}{ccccc}
%a_0&a_1&\cdots&a_{N_x-2}&a_{N_x-1}\\
%a_{-1}&a_0&a_0&\cdots&a_{N_x-2}\\
%\vdots&a_{-1}&a_0&\ddots&\vdots\\
%a_{2-N_x}&\vdots&\ddots&\ddots&a_1\\
%a_{1-N_x}&a_{2-N_x}&\cdots&a_{-1}&a_0
%\end{array}
%\right],
%~~
%A_y:=
%\left[
%\begin{array}{ccccc}
%a_0&a_1&\cdots&a_{N_y-2}&a_{N_y-1}\\
%a_{-1}&a_0&a_0&\cdots&a_{N_y-2}\\
%\vdots&a_{-1}&a_0&\ddots&\vdots\\
%a_{2-N_y}&\vdots&\ddots&\ddots&a_1\\
%a_{1-N_y}&a_{2-N_y}&\cdots&a_{-1}&a_0
%\end{array}
%\right]
%\]
%\[
%A_k:=
%\left[
%\begin{array}{ccccc}
%a_{0k}&a_{1k}&\cdots&a_{N_x-2,k}&a_{N_x-1,k}\\
%a_{-1k}&a_{0k}&a_{0k}&\cdots&a_{N_x-2}\\
%\vdots&a_{-1k}&a_{0k}&\ddots&\vdots\\
%a_{2-N_x,k}&\vdots&\ddots&\ddots&a_{1k}\\
%a_{1-N_x,k}&a_{2-N_x,k}&\cdots&a_{-1k}&a_{0k}
%\end{array}
%\right],
%~~
%A_h:=
%\left[
%\begin{array}{ccccc}
%A_0&A_1&\cdots&A_{N_y-2}&A_{N_y-1}\\
%A_{-1}&A_0&A_0&\cdots&A_{N_y-2}\\
%\vdots&A_{-1}&A_0&\ddots&\vdots\\
%A_{2-N_y}&\vdots&\ddots&\ddots&A_1\\
%A_{1-N_y}&A_{2-N_y}&\cdots&A_{-1}&A_0
%\end{array}
%\right]
%\]

\section{Numerical experiments}\label{section:numerical}
In this section, we  use the S-ADI scheme to solve the fractional Laplacian sine-Gordon equation and Klein-Gordon equation, and compare it with the following traditional non-ADI scheme:
\begin{numcases}{}
\frac{u^{n+1}_{ij}-2u^{n}_{ij}+u^{n-1}_{ij}}{\tau^2} = -\frac{\kappa}{2} L_h^\alpha (u^{n+1}_{ij}+u^{n-1}_{ij}) + g(u^n_{ij}), \quad n\ge 1,\label{eq:non-ADI scheme1}\\
\frac{u^{1}_{ij}-u^{0}_{ij}-\tau \partial_t u^0_{ij}}{\tau^2/2} =-\kappa L_h^\alpha u^1_{ij} + g(u^0_{ij}).\label{eq:non-ADI scheme2}
\end{numcases}
The coefficient matrices of the linear systems appearing in the S-ADI scheme are Toeplitz. Hence, we  apply the Gohberg–Semencul formula discussed in Section \ref{section:implementation} to solve them. For the linear systems in the non-ADI scheme \eqref{eq:non-ADI scheme1}--\eqref{eq:non-ADI scheme2}, we use the preconditioned conjugate gradient method with the state of the art $\tau$-preconditioner \cite{Li_Sun,Huang_Lin_2022}.

Denote the numerical solution at $(x_i,y_j,t_n)$ computed with step sizes $\tau,h$ by $u_{ij}^n(\tau,h)$ for $1\le n\le M$ and $1\le i,j \le N$. 
When verifying the time convergence order, the error and convergence order are estimated by the following formula:
\begin{gather*}\text{Error}_1(\tau,h):=  \sqrt{h^2\sum_{ij}|u^M_{ij}(\tau,h)-u^{2M}_{ij}(\tau/2,h)|^2},\\  
\text{Order}_1(\tau,h):= \log_2(\text{Error}_1(\tau,h)/\text{Error}_1(\tau/2,h)),
\end{gather*}
where $h$ is a small fixed positive number. 
When verifying the space convergence order, the error and convergence order are estimated by the following formula:
\begin{gather*}\text{Error}_2(\tau,h):=  \sqrt{h^2\sum_{ij}|u^M_{ij}(\tau,h)-u^{M}_{2i,2j}(\tau,h/2)|^2},\\  
\text{Order}_2(\tau,h):= \log_2(\text{Error}_2(\tau,h)/\text{Error}_2(\tau,h/2)),
\end{gather*}
where $\tau$ is a small fixed positive number.

\begin{example}\label{exp:sine-Gordon}
In this example, we consider the fractional Laplacian sine-Gordon equation, i.e., $g(u)=-\sin(u)$.
The domain $\Omega$ is set to be $(-10,10)\times(-10,10)$. And the initial conditions are chosen as
\[\varphi_1(x,y)=0,\quad \varphi_2(x,y)=\sech(\sqrt{x^2+y^2}).\]

To assess the time convergence order of the S-ADI scheme, we set the space step size to $h=1/40$, and apply the S-ADI scheme and non-ADI scheme with time step sizes $\tau=1/(10\times 2^{k-1})$ for $k=1,2,3,4$ to obtain the numerical solutions of the fractional Laplacian sine-Gordon equation with fractional orders $\alpha=1.1,1.5,1.9$ at $t=5$.
Similarly, to test the space convergence order of the S-ADI scheme, we set the time step size to $\tau=1/100$, and apply the S-ADI scheme and non-ADI scheme with space step sizes $h=1/2^{k-1}$ for $k=1,2,3,4$ to obtain the numerical solutions of the fractional Laplacian sine-Gordon equation with fractional orders $\alpha=1.1,1.5,1.9$ at $t=5$.
The numerical results in Tables \ref{table:sine-Gordon time convergence} and \ref{table:sine-Gordon space convergence} indicate that the S-ADI scheme achieves second-order convergence and has the similar accuracy as the non-ADI scheme. Furthermore, due to the ADI technique, the S-ADI scheme is much more computationally efficient than the non-ADI scheme.

To provide insight into the numerical solution, we plot $\sin(u^n_{ij})$ at $t=1.25,2.50,3.75,5.00$ obtained by the S-ADI scheme with step sizes $\tau=1/100$ and $h=1/40$ for the fractional Laplacian sine-Gordon equation with fractional orders $\alpha=1.1,1.5,1.9$ in Figure \ref{fig:sine-Gordon numerical solution}. We observe that the movement of the ring solutions is different for different fractional orders.

\begin{table}[!htbp]
    \caption{Errors, time convergence orders and CPU times of S-ADI scheme and non-ADI scheme with $h=1/40$  for Example \ref{exp:sine-Gordon}.}\label{table:sine-Gordon time convergence}
    \centering
    \begin{tabular}{llllllll}
    \toprule
        ~&~&\multicolumn{3}{c}{S-ADI}&\multicolumn{3}{c}{non-ADI}\\ \cmidrule(r){3-5} \cmidrule(r){6-8}
        $\alpha$ & $\tau$ & $\text{Error}_1$ & $\text{Order}_1$ & CPU & $\text{Error}_1$ & $\text{Order}_1$ & CPU \\ \hline
        1.1 &    1/10  & 9.5565E-03 & ~ & 30.2224 & 7.7623E-03 & ~ & 140.3881 \\
        ~ &    1/20  & 2.3936E-03 & 1.9973 & 63.1951 & 1.9523E-03 & 1.9913 & 251.1600 \\
        ~ &    1/40  & 5.9869E-04 & 1.9993 & 120.0303 & 4.8883E-04 & 1.9978 & 451.5331 \\
        ~ &    1/80  & 1.4969E-04 & 1.9998 & 223.4707 & 1.2225E-04 & 1.9994 & 944.6361 \\ \hline
        1.5 &    1/10  & 8.9900E-03 & ~ & 31.9446 & 8.2681E-03 & ~ & 169.4518 \\
        ~ &    1/20  & 2.2583E-03 & 1.9931 & 58.6967 & 2.0890E-03 & 1.9847 & 300.7541 \\
        ~ &    1/40  & 5.6526E-04 & 1.9983 & 113.4957 & 5.2366E-04 & 1.9961 & 511.2563 \\
        ~ &    1/80  & 1.4136E-04 & 1.9995 & 223.1865 & 1.3100E-04 & 1.9990 & 909.0794 \\ \hline
        1.9 &    1/10  & 9.3891E-03 & ~ & 30.5955 & 9.5583E-03 & ~ & 104.2363 \\
        ~ &    1/20  & 2.3670E-03 & 1.9879 & 57.6734 & 2.4320E-03 & 1.9746 & 197.2143 \\
        ~ &    1/40  & 5.9299E-04 & 1.9970 & 113.5512 & 6.1075E-04 & 1.9935 & 351.0598 \\
        ~ &    1/80  & 1.4834E-04 & 1.9991 & 229.9328 & 1.5287E-04 & 1.9983 & 698.5949 \\   \bottomrule
    \end{tabular}
\end{table}

\begin{table}[!htbp]
    \caption{Errors, space convergence orders and CPU times of S-ADI scheme and non-ADI scheme with $\tau=1/100$  for Example \ref{exp:sine-Gordon}.}
    \centering
    \begin{tabular}{llllllll}
    \toprule
        ~&~&\multicolumn{3}{c}{S-ADI}&\multicolumn{3}{c}{non-ADI}\\ \cmidrule(r){3-5} \cmidrule(r){6-8}
        $\alpha$ & $h$ & $\text{Error}_2$ & $\text{Order}_2$ & CPU & $\text{Error}_2$ & $\text{Order}_2$ & CPU \\ \hline
        1.1 & 1         & 6.3150E-02 & ~ & 3.3870 & 6.3146E-02 & ~ & 3.9880 \\
        ~ &    1/2   & 1.3930E-02 & 2.1806 & 3.5444 & 1.3929E-02 & 2.1806 & 5.1784 \\
        ~ &    1/4   & 3.5042E-03 & 1.9910 & 4.6666 & 3.5040E-03 & 1.9910 & 10.3250 \\
        ~ &    1/8   & 9.0305E-04 & 1.9562 & 9.5196 & 9.0300E-04 & 1.9562 & 31.3643 \\ \hline
        1.5 & 1         & 8.1276E-02 & ~ & 3.2160 & 8.1268E-02 & ~ & 3.9510 \\
        ~ &    1/2   & 2.0729E-02 & 1.9712 & 3.4648 & 2.0726E-02 & 1.9712 & 4.7865 \\
        ~ &    1/4   & 5.1991E-03 & 1.9953 & 4.6554 & 5.1985E-03 & 1.9953 & 10.2934 \\
        ~ &    1/8   & 1.3009E-03 & 1.9988 & 9.4895 & 1.3007E-03 & 1.9988 & 31.6671 \\ \hline
        1.9 & 1         & 1.0716E-01 & ~ & 3.2330 & 1.0715E-01 & ~ & 3.9734 \\
        ~ &    1/2   & 2.7583E-02 & 1.9579 & 3.4256 & 2.7578E-02 & 1.9580 & 4.7929 \\
        ~ &    1/4   & 6.9475E-03 & 1.9892 & 4.6446 & 6.9462E-03 & 1.9892 & 9.0031 \\
        ~ &    1/8   & 1.7379E-03 & 1.9991 & 9.5679 & 1.7376E-03 & 1.9991 & 25.2979 \\  \bottomrule
    \end{tabular}\label{table:sine-Gordon space convergence}
\end{table}

\begin{figure}
    \centering

    \subfigure[$\alpha=1.1$, $t=1.25$]{
    \begin{minipage}[c]{0.25\textwidth}
    \centering
    \includegraphics[width=1\textwidth]{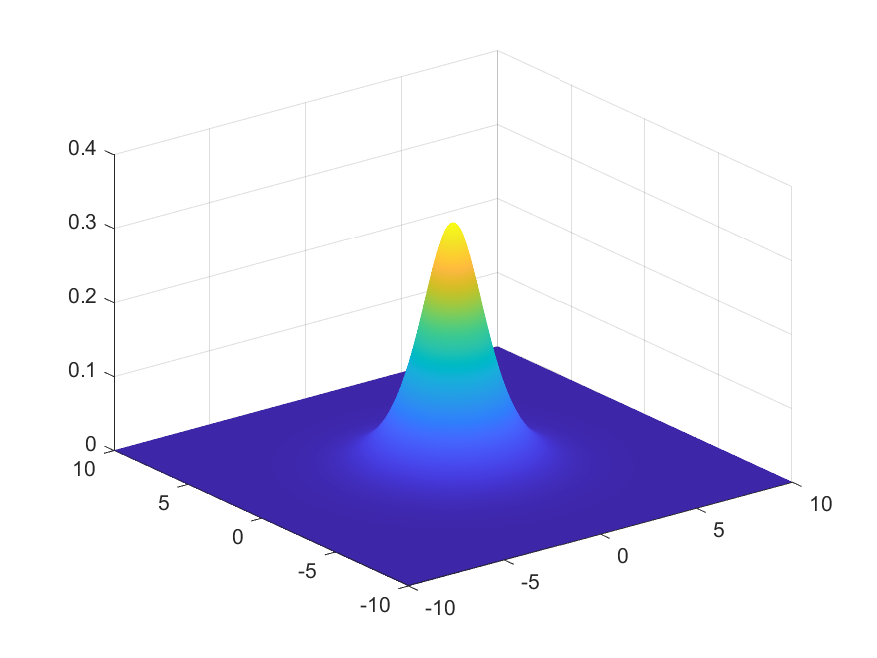}
    \end{minipage}
}
\!\!\!\!\!\!\!\!\!\!\!\!\!\!
   \subfigure[$\alpha=1.1$, $t=2.50$]{
    \begin{minipage}[c]{0.25\textwidth}
    \centering
    \includegraphics[width=1\textwidth]{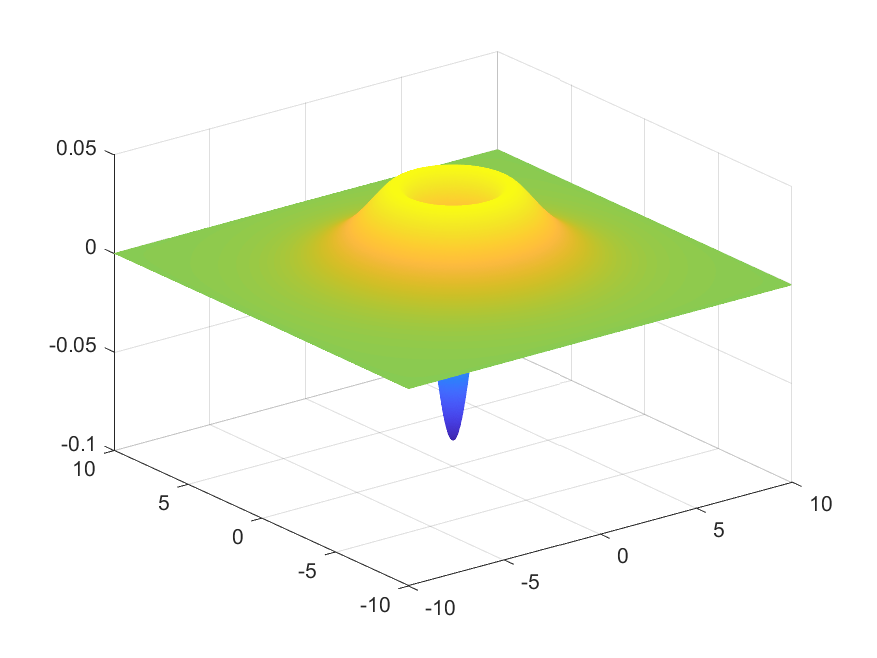}
    \end{minipage}
}
\!\!\!\!\!\!\!\!\!\!\!\!\!\!
    \subfigure[$\alpha=1.1$, $t=3.75$]{
    \begin{minipage}[c]{0.25\textwidth}
    \centering
    \includegraphics[width=1\textwidth]{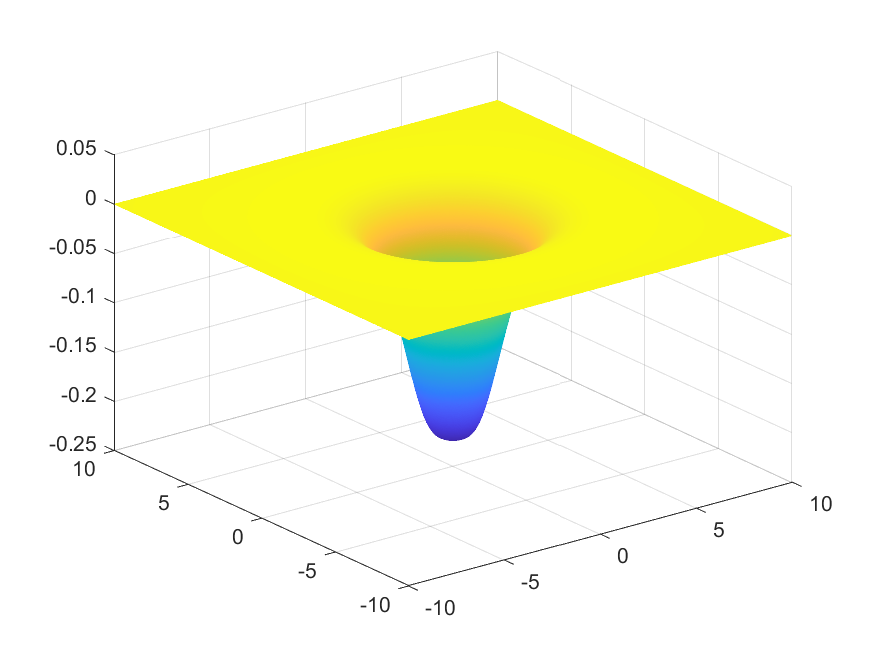}
    \end{minipage}
}
\!\!\!\!\!\!\!\!\!\!\!\!\!\!
   \subfigure[$\alpha=1.1$, $t=5.00$]{
    \begin{minipage}[c]{0.25\textwidth}
    \centering
    \includegraphics[width=1\textwidth]{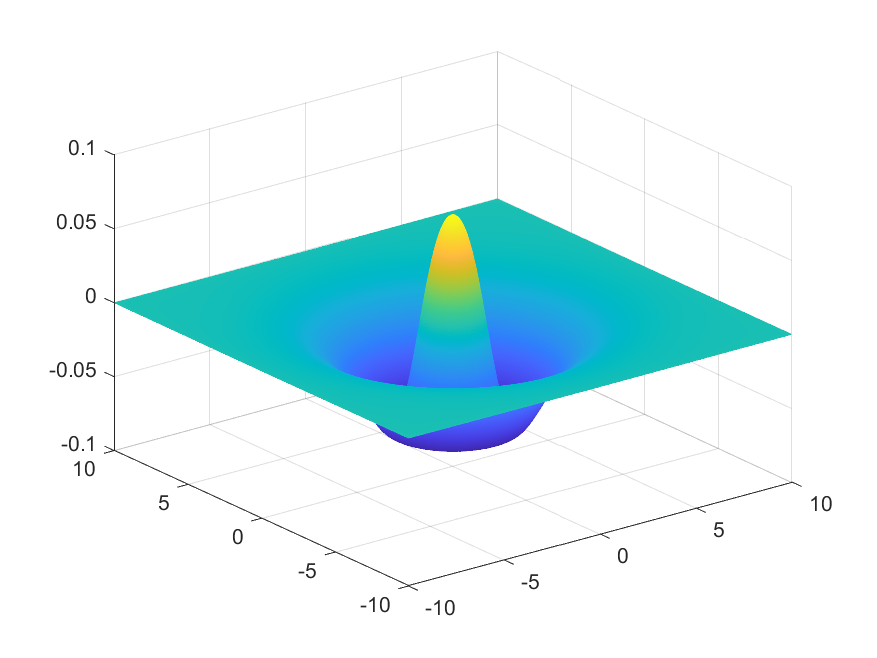}
    \end{minipage}
}

    \subfigure[$\alpha=1.5$, $t=1.25$]{
    \begin{minipage}[c]{0.25\textwidth}
    \centering
    \includegraphics[width=1\textwidth]{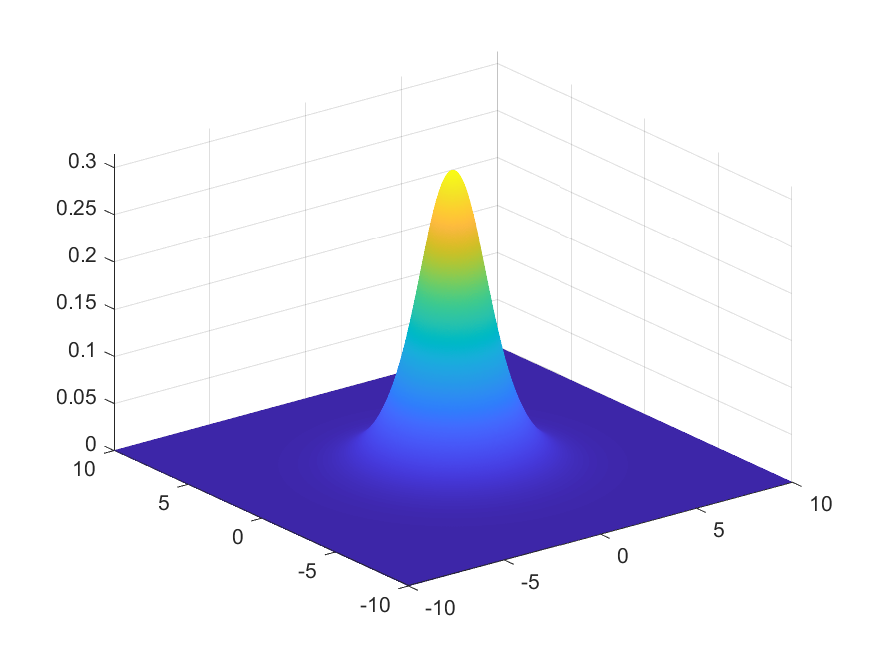}
    \end{minipage}
}
\!\!\!\!\!\!\!\!\!\!\!\!\!\!
   \subfigure[$\alpha=1.5$, $t=2.50$]{
    \begin{minipage}[c]{0.25\textwidth}
    \centering
    \includegraphics[width=1\textwidth]{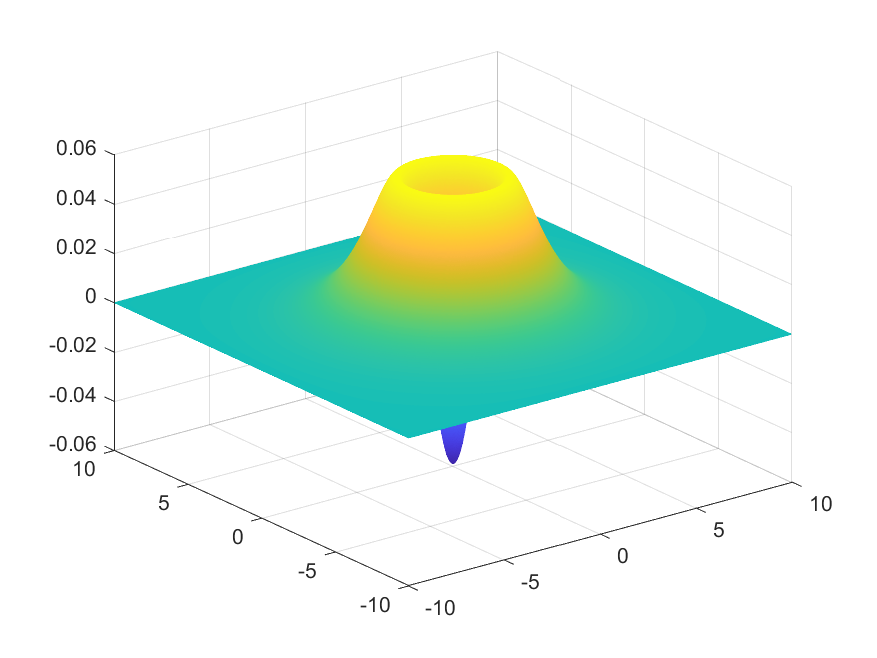}
    \end{minipage}
}
\!\!\!\!\!\!\!\!\!\!\!\!\!\!
    \subfigure[$\alpha=1.5$, $t=3.75$]{
    \begin{minipage}[c]{0.25\textwidth}
    \centering
    \includegraphics[width=1\textwidth]{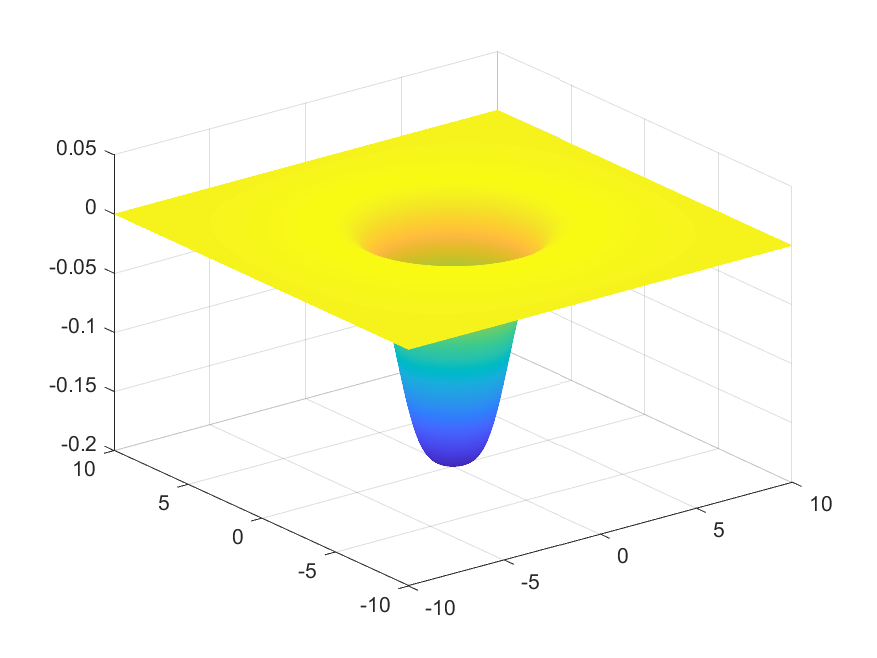}
    \end{minipage}
}
\!\!\!\!\!\!\!\!\!\!\!\!\!\!
   \subfigure[$\alpha=1.5$, $t=5.00$]{
    \begin{minipage}[c]{0.25\textwidth}
    \centering
    \includegraphics[width=1\textwidth]{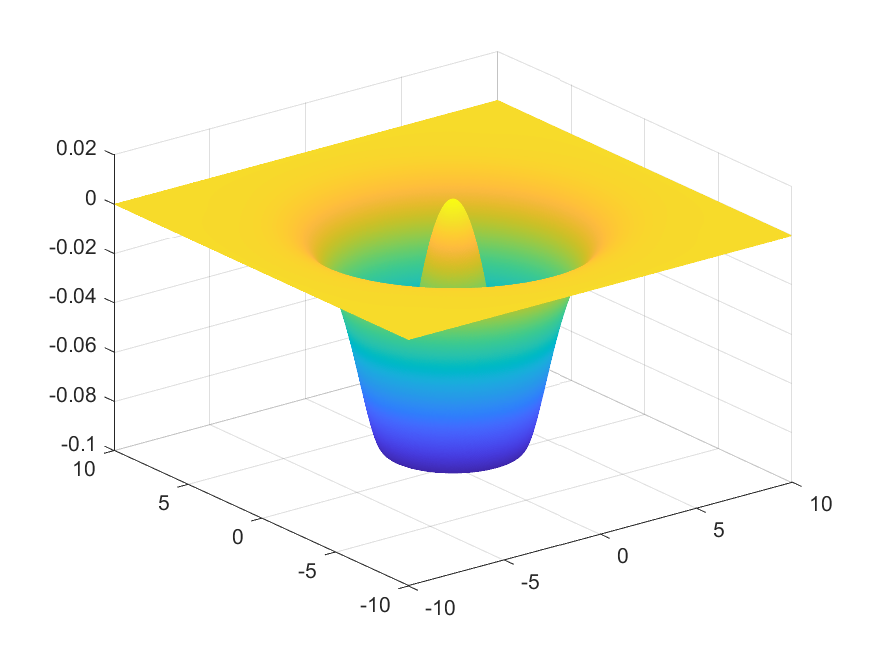}
    \end{minipage}
}

    \subfigure[$\alpha=1.9$, $t=1.25$]{
    \begin{minipage}[c]{0.25\textwidth}
    \centering
    \includegraphics[width=1\textwidth]{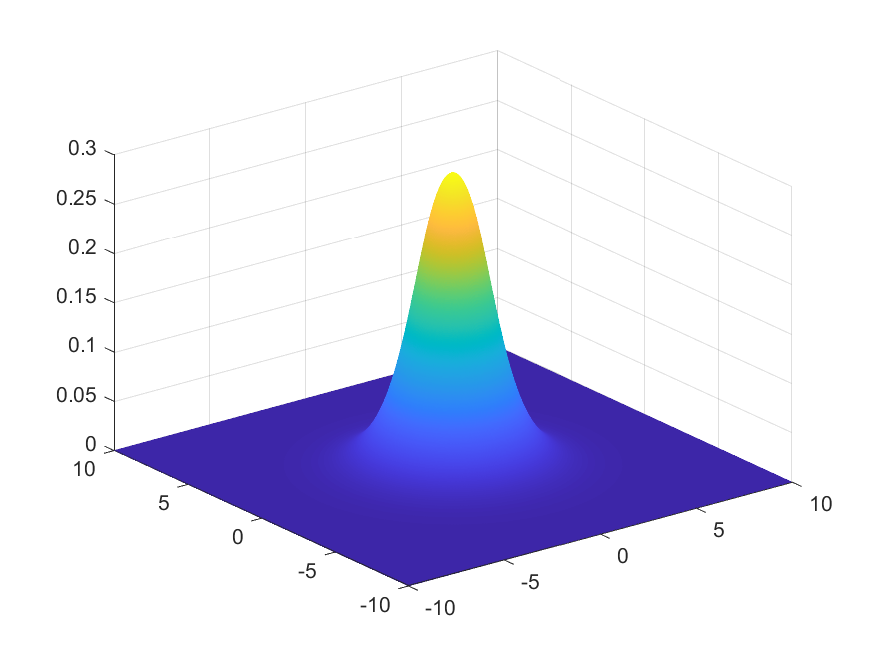}
    \end{minipage}
}
\!\!\!\!\!\!\!\!\!\!\!\!\!\!
   \subfigure[$\alpha=1.9$, $t=2.50$]{
    \begin{minipage}[c]{0.25\textwidth}
    \centering
    \includegraphics[width=1\textwidth]{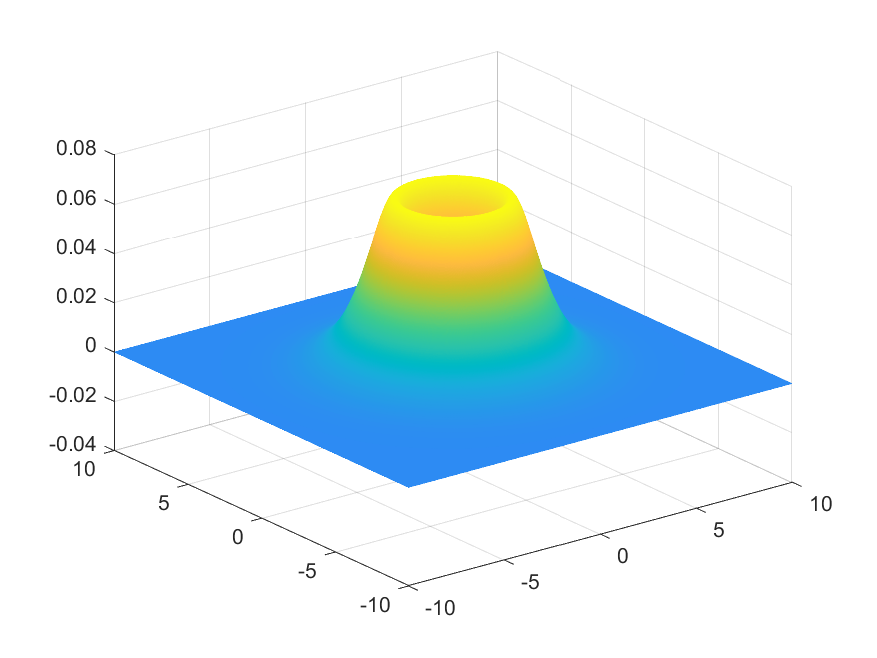}
    \end{minipage}
}
\!\!\!\!\!\!\!\!\!\!\!\!\!\!
    \subfigure[$\alpha=1.9$, $t=3.75$]{
    \begin{minipage}[c]{0.25\textwidth}
    \centering
    \includegraphics[width=1\textwidth]{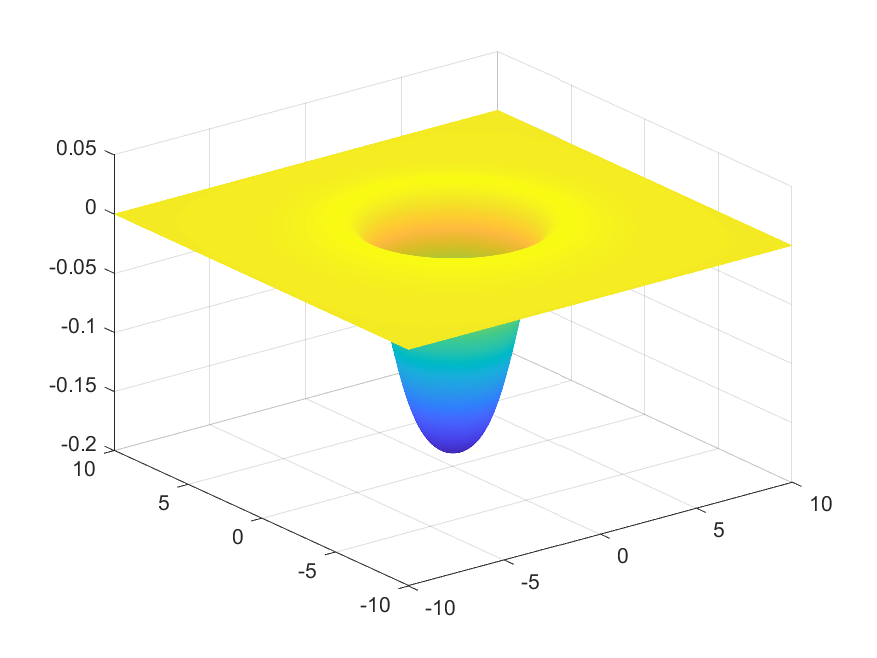}
    \end{minipage}
}
\!\!\!\!\!\!\!\!\!\!\!\!\!\!
   \subfigure[$\alpha=1.9$, $t=5.00$]{
    \begin{minipage}[c]{0.25\textwidth}
    \centering
    \includegraphics[width=1\textwidth]{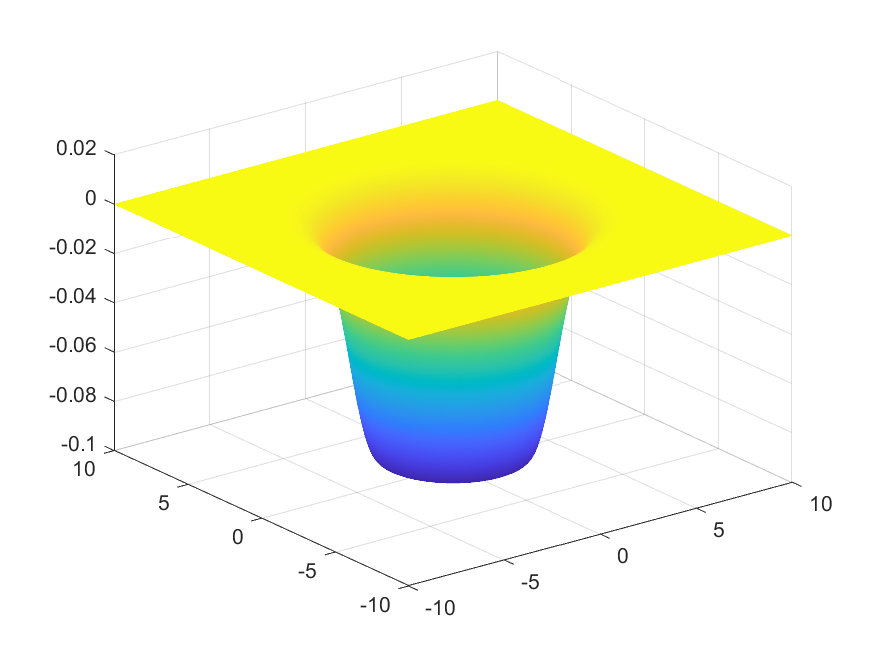}
    \end{minipage}
}

    \caption{The surfaces of $\sin(u^n_{ij}/2)$ by S-ADI scheme with $\tau=1/100$ and $h=1/40$ for Example \ref{exp:sine-Gordon}.}
    \label{fig:sine-Gordon numerical solution}
\end{figure}

\end{example}

\begin{example}\label{exp:Klein-Gordon}
In this example, we consider the fractional Laplacian Klein-Gordon equation, i.e., $g(u)=-u^3$.
The domain $\Omega$ is set to be $(-10,10)\times(-10,10)$. And the initial conditions are chosen as \cite{Hu_Cai_2021_2}:
\[\varphi_1(x,y)=\sech(\cosh(x^2+y^2)),\quad \varphi_2(x,y)=0.\]

We apply both the S-ADI scheme and non-ADI scheme to compute the solution of the fractional Klein-Gordon equation for different fractional orders $\alpha=1.1,1.5,1.9$ at $t=8$. In order to demonstrate the time convergence order, we fix the space step size to be $h=1/50$ and vary the time step size as $\tau=4/(25\times 2^k)(k=0,1,2,3)$. 
To manifest the space convergence order, we fix the time step size to be $\tau=1/125$ and vary the space step size as $h=2/(5\times 2^k)(k=0,1,2,3)$. The computational results are presented in Tables \ref{table:Klein-Gordon time convergence} and \ref{table:Klein-Gordon space convergence}.
We observe that the S-ADI scheme achieves a second-order convergence order.
 Moreover, the computational cost of the S-ADI scheme is significantly lower than the non-ADI scheme.

To gain further insight into the numerical solutions, we plot the surfaces of $u^n_{ij}$ at $t=2,4,6,8$ using the S-ADI scheme with step size $\tau=1/100$ and $h=1/40$ for the fractional Klein-Gordon equation with different fractional orders $\alpha=1.1,1.5,1.9$. These plots are shown in Figure \ref{fig:Klein-Gordon numerical solution}. It is clear from the plots that the spread speeds of the initial soliton for different fractional orders are distinct.

\begin{table}[!htbp]
    \caption{Errors, time convergence orders and CPU times of S-ADI scheme and non-ADI scheme with $h=1/50$  for Example \ref{exp:Klein-Gordon}.}
    \centering
    \begin{tabular}{llllllll}
    \toprule
        ~&~&\multicolumn{3}{c}{S-ADI}&\multicolumn{3}{c}{non-ADI}\\ \cmidrule(r){3-5} \cmidrule(r){6-8}
        $\alpha$ & $\tau$ & $\text{Error}_1$ & $\text{Order}_1$ & CPU & $\text{Error}_1$ & $\text{Order}_1$ & CPU \\ \hline
        1.1 &    4/25  & 2.2313E-01 & ~ & 57.8186 & 1.1807E-01 & ~ & 305.3089 \\
        ~ &    2/25  & 5.7897E-02 & 1.9463 & 112.9196 & 2.9612E-02 & 1.9954 & 498.1011 \\
        ~ &    1/25  & 1.4543E-02 & 1.9932 & 216.5971 & 1.5700E-02 & 0.9154 & 1025.5165 \\
        ~ &    1/50  & 3.6394E-03 & 1.9985 & 428.0685 & 9.3372E-03 & 0.7497 & 1692.2861 \\ \hline
        1.5 &    4/25  & 3.5383E-01 & ~ & 57.3419 & 2.6350E-01 & ~ & 318.2574 \\
        ~ &    2/25  & 1.0344E-01 & 1.7742 & 109.3765 & 6.7303E-02 & 1.9691 & 529.6633 \\
        ~ &    1/25  & 2.6717E-02 & 1.9530 & 217.2319 & 1.8568E-02 & 1.8579 & 977.5806 \\
        ~ &    1/50  & 6.7184E-03 & 1.9916 & 430.4121 & 9.1443E-03 & 1.0219 & 1672.2912 \\ \hline
        1.9 &    4/25  & 5.2869E-01 & ~ & 57.4014 & 4.8025E-01 & ~ & 255.7158 \\
        ~ &    2/25  & 2.0867E-01 & 1.3412 & 110.0169 & 1.8102E-01 & 1.4076 & 491.6168 \\
        ~ &    1/25  & 6.3176E-02 & 1.7238 & 214.5857 & 5.1215E-02 & 1.8215 & 853.5335 \\
        ~ &    1/50  & 1.6467E-02 & 1.9398 & 432.4007 & 1.3739E-02 & 1.8983 & 1652.7898 \\    \bottomrule
    \end{tabular}\label{table:Klein-Gordon time convergence}
\end{table}

\begin{table}[!htbp]
    \caption{Errors, space convergence orders and CPU times of S-ADI scheme and non-ADI scheme with $\tau=1/125$  for Example \ref{exp:Klein-Gordon}.}
    \centering
    \begin{tabular}{llllllll}
    \toprule
        ~&~&\multicolumn{3}{c}{S-ADI}&\multicolumn{3}{c}{non-ADI}\\ \cmidrule(r){3-5} \cmidrule(r){6-8}
        $\alpha$ & $h$ & $\text{Error}_2$ & $\text{Order}_2$ & CPU & $\text{Error}_2$ & $\text{Order}_2$ & CPU \\ \hline
        1.1 &    2/5   & 2.6371E-01 & ~ & 6.9554 & 2.6388E-01 & ~ & 15.0042 \\
        ~ &    1/5   & 7.9964E-02 & 1.7215 & 8.3911 & 8.0038E-02 & 1.7212 & 30.3791 \\
        ~ &    1/10  & 2.0179E-02 & 1.9865 & 33.6068 & 2.0176E-02 & 1.9880 & 151.0825 \\
        ~ &    1/20  & 4.9825E-03 & 2.0179 & 115.2789 & 4.9817E-03 & 2.0180 & 464.6734 \\ \hline
        1.5 &    2/5   & 3.9151E-01 & ~ & 5.9673 & 3.9167E-01 & ~ & 13.6257 \\
        ~ &    1/5   & 1.3400E-01 & 1.5468 & 10.9536 & 1.3408E-01 & 1.5465 & 27.9086 \\
        ~ &    1/10  & 3.7027E-02 & 1.8556 & 32.6669 & 3.7051E-02 & 1.8555 & 77.0358 \\
        ~ &    1/20  & 9.3993E-03 & 1.9779 & 116.0624 & 9.4057E-03 & 1.9779 & 284.3546 \\ \hline
        1.9 &    2/5   & 5.1604E-01 & ~ & 6.2365 & 5.1626E-01 & ~ & 14.2160 \\
        ~ &    1/5   & 2.1356E-01 & 1.2729 & 10.9624 & 2.1368E-01 & 1.2726 & 29.1248 \\
        ~ &    1/10  & 6.6559E-02 & 1.6819 & 31.3464 & 6.6610E-02 & 1.6817 & 125.6665 \\
        ~ &    1/20  & 1.7504E-02 & 1.9269 & 117.8960 & 1.7519E-02 & 1.9268 & 415.6226 \\  \bottomrule
    \end{tabular}\label{table:Klein-Gordon space convergence}
\end{table}

\begin{figure}
    \centering

    \subfigure[$\alpha=1.1$, $t=2$]{
    \begin{minipage}[c]{0.25\textwidth}
    \centering
    \includegraphics[width=1\textwidth]{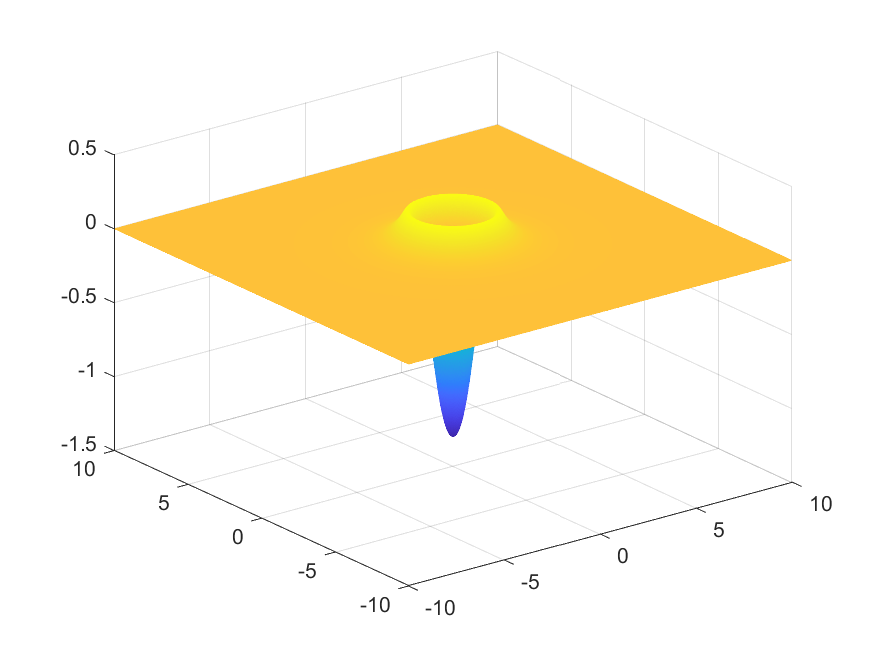}
    \end{minipage}
}
\!\!\!\!\!\!\!\!\!\!\!\!\!\!
   \subfigure[$\alpha=1.1$, $t=4$]{
    \begin{minipage}[c]{0.25\textwidth}
    \centering
    \includegraphics[width=1\textwidth]{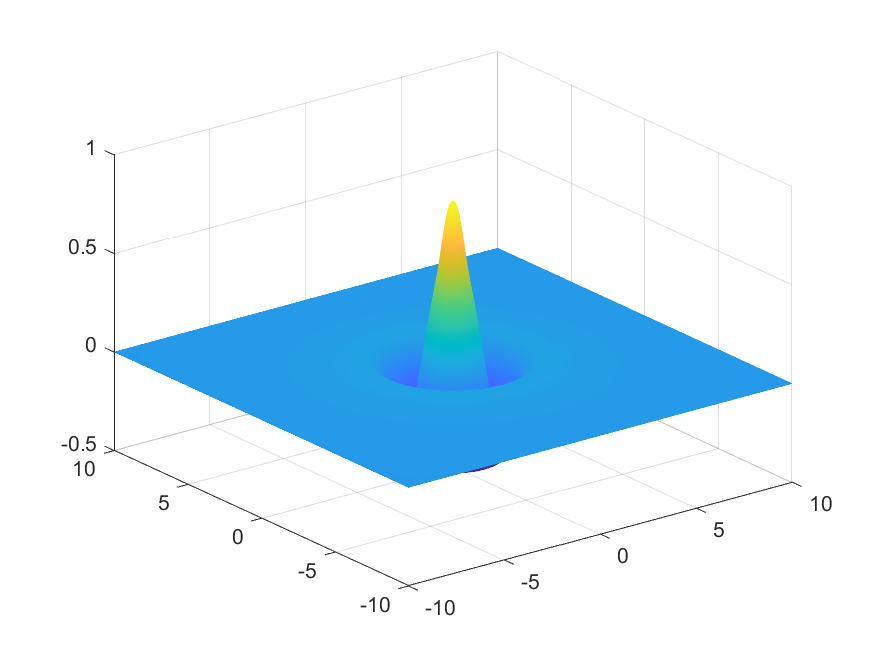}
    \end{minipage}
}
\!\!\!\!\!\!\!\!\!\!\!\!\!\!
    \subfigure[$\alpha=1.1$, $t=6$]{
    \begin{minipage}[c]{0.25\textwidth}
    \centering
    \includegraphics[width=1\textwidth]{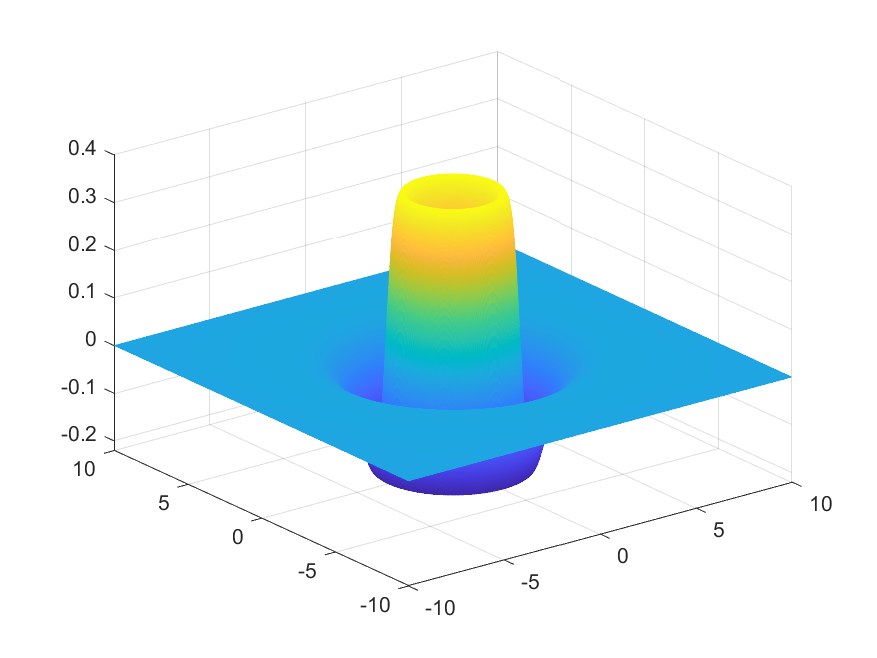}
    \end{minipage}
}
\!\!\!\!\!\!\!\!\!\!\!\!\!\!
   \subfigure[$\alpha=1.1$, $t=8$]{
    \begin{minipage}[c]{0.25\textwidth}
    \centering
    \includegraphics[width=1\textwidth]{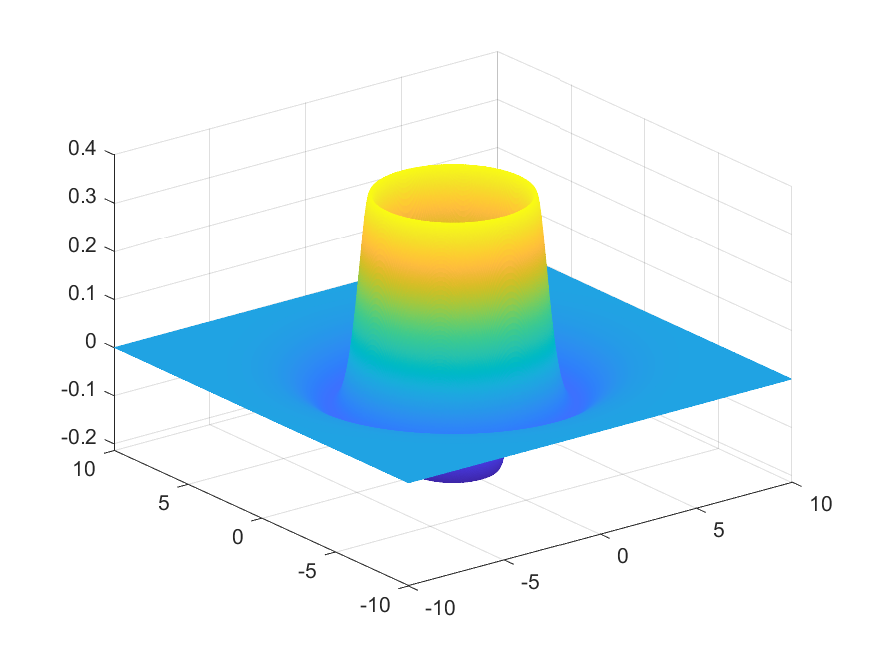}
    \end{minipage}
}

    \subfigure[$\alpha=1.5$, $t=2$]{
    \begin{minipage}[c]{0.25\textwidth}
    \centering
    \includegraphics[width=1\textwidth]{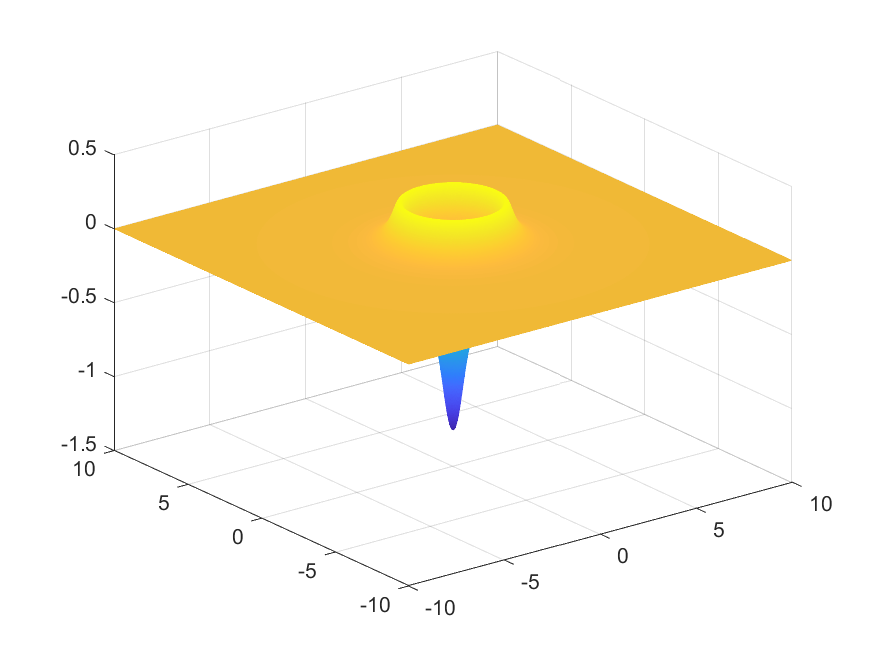}
    \end{minipage}
}
\!\!\!\!\!\!\!\!\!\!\!\!\!\!
   \subfigure[$\alpha=1.5$, $t=4$]{
    \begin{minipage}[c]{0.25\textwidth}
    \centering
    \includegraphics[width=1\textwidth]{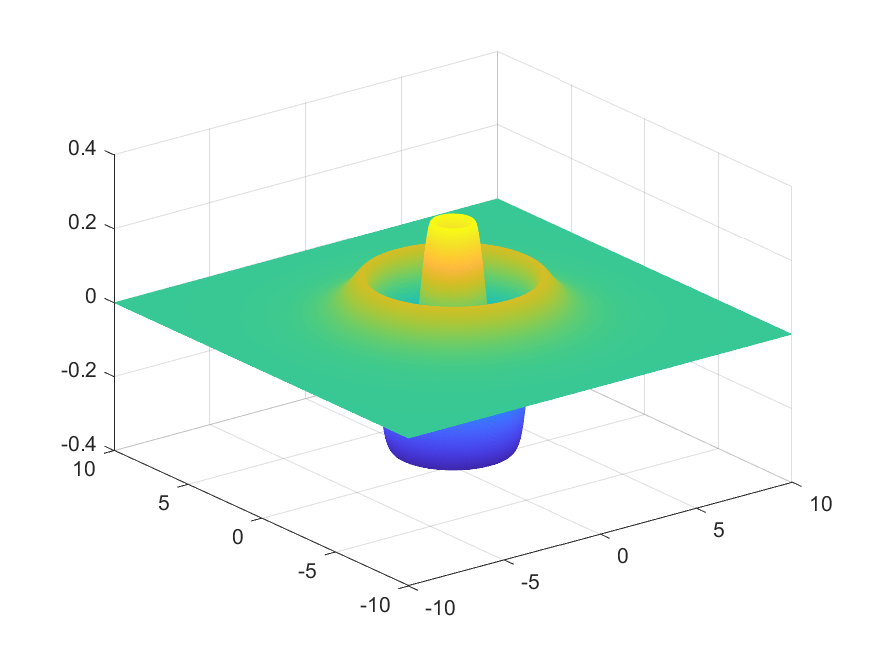}
    \end{minipage}
}
\!\!\!\!\!\!\!\!\!\!\!\!\!\!
    \subfigure[$\alpha=1.5$, $t=6$]{
    \begin{minipage}[c]{0.25\textwidth}
    \centering
    \includegraphics[width=1\textwidth]{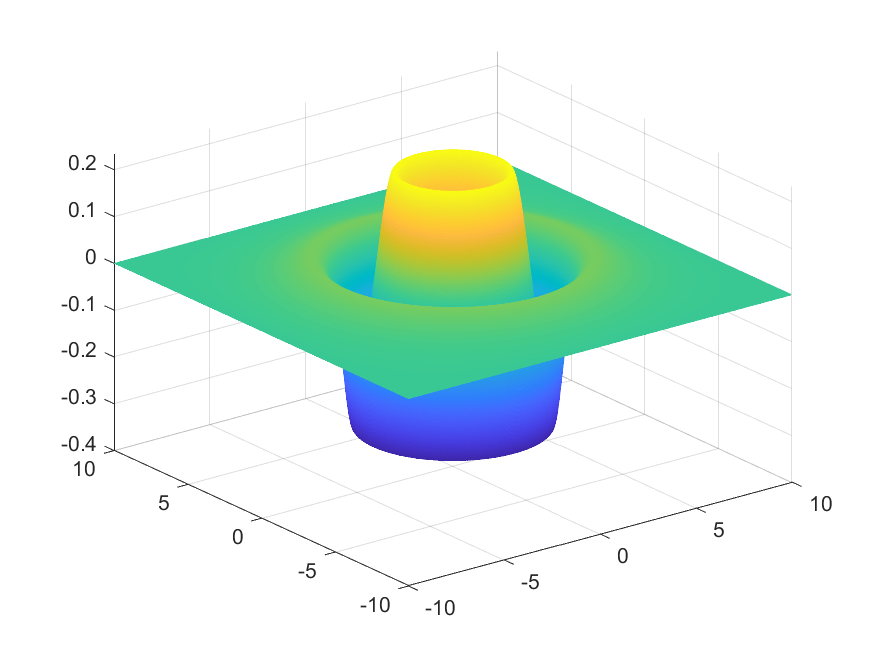}
    \end{minipage}
}
\!\!\!\!\!\!\!\!\!\!\!\!\!\!
   \subfigure[$\alpha=1.5$, $t=8$]{
    \begin{minipage}[c]{0.25\textwidth}
    \centering
    \includegraphics[width=1\textwidth]{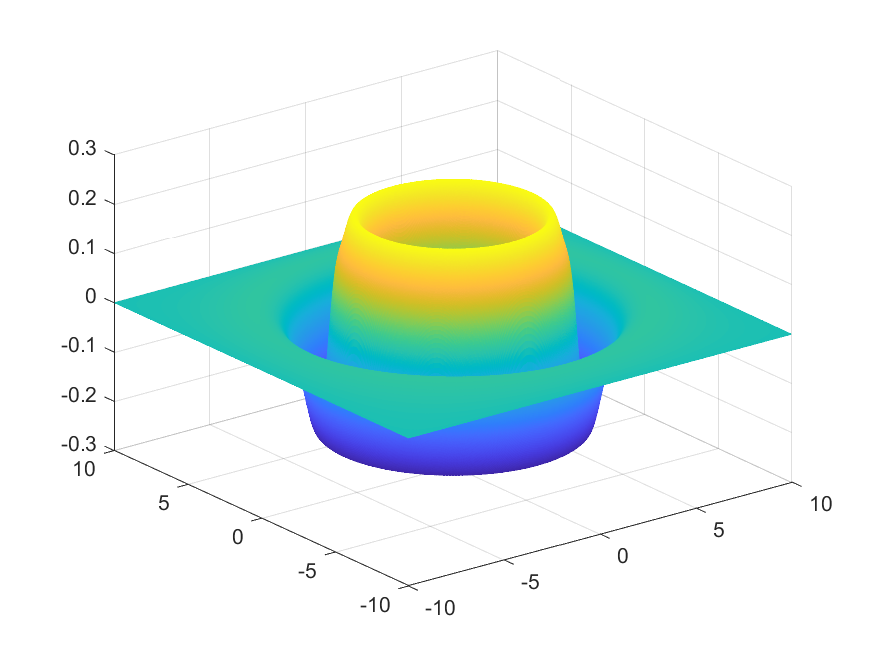}
    \end{minipage}
}

    \subfigure[$\alpha=1.9$, $t=2$]{
    \begin{minipage}[c]{0.25\textwidth}
    \centering
    \includegraphics[width=1\textwidth]{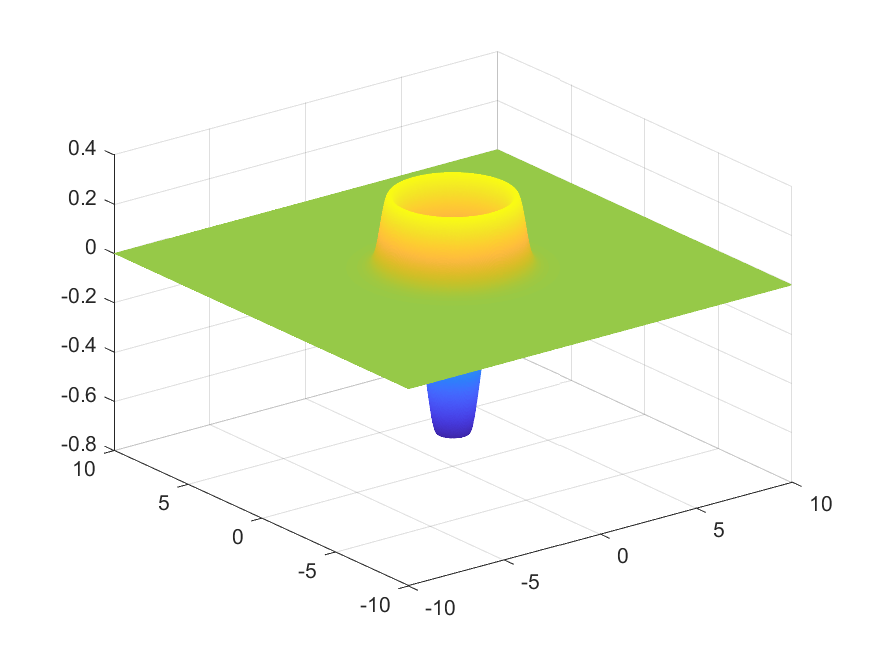}
    \end{minipage}
}
\!\!\!\!\!\!\!\!\!\!\!\!\!\!
   \subfigure[$\alpha=1.9$, $t=4$]{
    \begin{minipage}[c]{0.25\textwidth}
    \centering
    \includegraphics[width=1\textwidth]{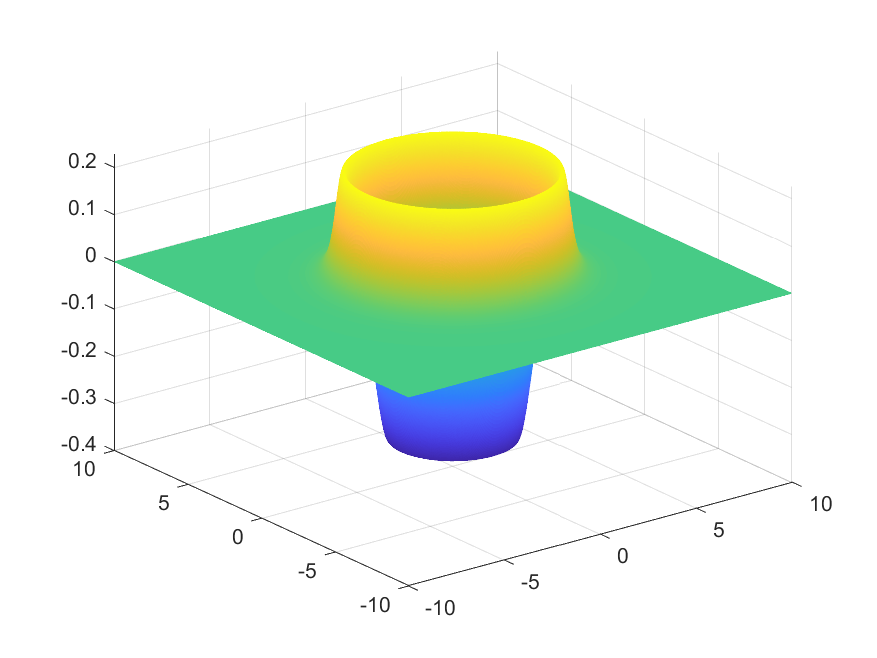}
    \end{minipage}
}
\!\!\!\!\!\!\!\!\!\!\!\!\!\!
    \subfigure[$\alpha=1.9$, $t=6$]{
    \begin{minipage}[c]{0.25\textwidth}
    \centering
    \includegraphics[width=1\textwidth]{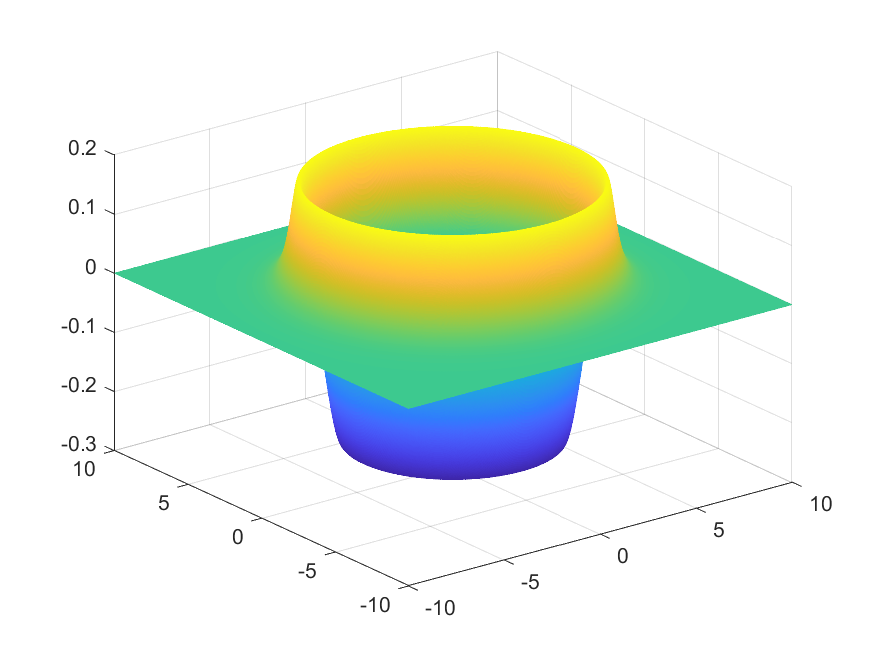}
    \end{minipage}
}
\!\!\!\!\!\!\!\!\!\!\!\!\!\!
   \subfigure[$\alpha=1.9$, $t=8$]{
    \begin{minipage}[c]{0.25\textwidth}
    \centering
    \includegraphics[width=1\textwidth]{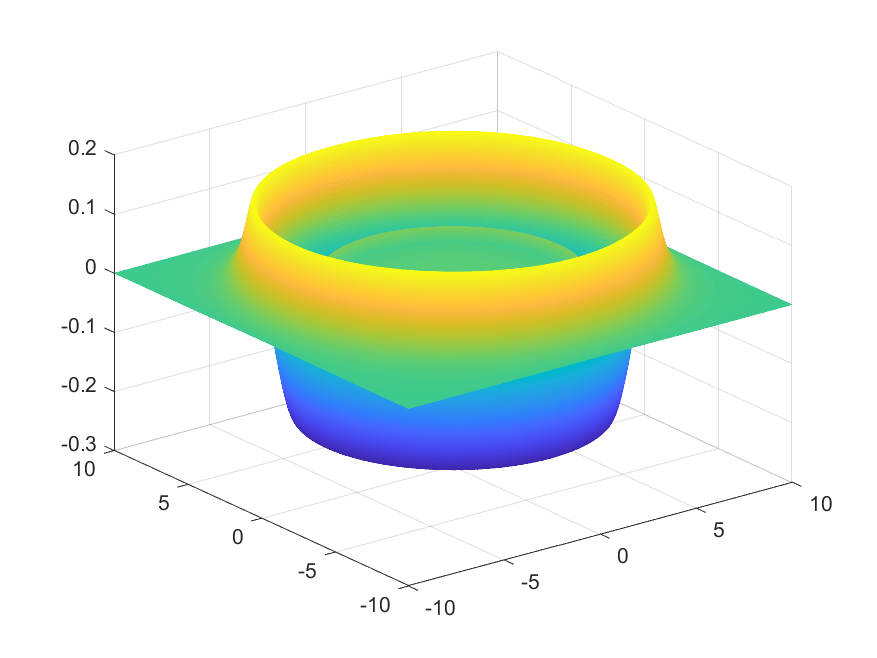}
    \end{minipage}
}

    \caption{The numerical solutions of S-ADI scheme with $\tau=1/100$ and $h=1/40$ for Example \ref{exp:Klein-Gordon}.}
    \label{fig:Klein-Gordon numerical solution}
\end{figure}

\end{example}

\section{Conclusion}\label{section:conclusion}

The nonlocal nature of the fractional Laplacian poses a challenge for constructing ADI schemes to solve partial differential equations involving this operator. In this work, we propose a second-order accuracy S-ADI scheme for the fractional Laplacian wave equation by using the operator splitting technique and splitting out the discrete Riesz fractional derivatives. Thanks to the ADI technique, the resulted Toeplitz linear systems are efficiently solved by GS formula. We rigorously prove the unconditional stability and convergence of the proposed scheme. To validate the performance of the S-ADI scheme, we apply it to solve the fractional Laplacian sine-Gordon equation and Klein-Gordon equation, and compare it with the non-ADI scheme. The numerical results demonstrate the efficiency and accuracy of the S-ADI scheme.

Although the S-ADI scheme proposed in this work is effective for solving the fractional Laplacian wave equation, it has some limitations. Specifically, the simplest explicit scheme used for the nonlinear term means that this scheme cannot conserve certain properties, such as energy invariance, of certain original equations. To address this limitation, future work will explore the use of the scalar auxiliary variable method, as described in \cite{Shen_Xu_2018,Liu_Li_2020}.
This paper focused solely on the fractional Laplacian wave equation, but the S-ADI scheme can also be applied to the fractional Laplacian diffusion equation. Ongoing research is exploring this application.
Besides, it is well-known that the fractional Laplacian is a special nonlocal operator, so it is possible to extend the S-ADI scheme to solve the evolution differential equation with general nonlocal operator.

%For now, the S-ADI scheme can only deal with the situation when coefficient of the fractional Laplacian is constant. In practice, the coefficient may vary according to the space location and time. Hence, the S-ADI scheme for variable coefficient fractional Laplacian differential equation is also a research direction. 

\section*{Declarations}

\bmhead{Ethical approval} Not Applicable.
\bmhead{Availability of supporting data} The data of codes involved in this paper are available from the corresponding author on reasonable request.
\bmhead{Competing interests} The authors declare no competing interests.
\bmhead{Funding} Science and Technology Development Fund of Macao SAR (Grant No. 0122/2020/A3); MYRG-GRG2023-00085-FST-UMDF from University of Macau.
\bmhead{Authors' contributions} The two authors contributed equally.
\bmhead{Acknowledgments} The second author (corresponding author) Hai-Wei Sun is supported by Science and Technology Development Fund of Macao SAR (Grant No. 0122/2020/A3) and MYRG-GRG2023-00085-FST-UMDF from University of Macau.

\end{document}